\title{Knapsack in hyperbolic groups}
\author{Markus Lohrey}
\address{Universit\"at Siegen, Germany}
\email{lohrey@eti.uni-siegen.de}
\thanks{This work has been supported by the DFG research project
LO 748/13-1.}
\theoremstyle{plain}
\newtheorem{theorem}{Theorem}[section]
\newtheorem{proposition}[theorem]{Proposition}
\newtheorem{lemma}[theorem]{Lemma}
\newtheorem{corollary}[theorem]{Corollary}
\theoremstyle{definition}
\newcommand{\NP}{\mathsf{NP}} 
\newcommand{\NL}{\mathsf{NL}} 
\newcommand{\N}{\mathbb{N}}
\newcommand{\Z}{\mathbb{Z}}
\newcommand{\DHB}{H_3(\Z)}
\newcommand{\shlex}{\mathsf{shlex}}
\newcommand{\Sol}{\mathsf{sol}}
 \newcommand{\LogCFL}{\ensuremath{\mathsf{LogCFL}}}
\begin{document}

\maketitle

\begin{abstract}
Recently knapsack problems have been generalized from the integers to arbitrary finitely generated groups.
The knapsack problem for a finitely generated group $G$ is the following decision problem: given a tuple 
$(g, g_1, \ldots, g_k)$ of elements of $G$, are there natural numbers $n_1, \ldots, n_k \in \N$ such that $g = g_1^{n_1} \cdots g_k^{n_k}$
holds in $G$? Myasnikov, Nikolaev, and Ushakov proved that for every (Gromov-)hyperbolic group, the knapsack problem can
be solved in polynomial time. In this paper, the precise complexity of the knapsack problem for hyperbolic group is determined:
for every hyperbolic group $G$, the knapsack problem belongs to the complexity class $\LogCFL$,
and it is $\LogCFL$-complete if $G$ contains a free group of rank two.
Moreover, it is shown that for every hyperbolic group $G$ and  every tuple $(g, g_1, \ldots, g_k)$ of elements of $G$ the set of all $(n_1, \ldots, n_k) \in \N^k$
such that $g = g_1^{n_1} \cdots g_k^{n_k}$ in $G$ is semilinear and a semilinear 
representation where all integers are of size polynomial in the total 
geodesic length of the $g, g_1, \ldots, g_k$ can be computed. Groups with this property are also called knapsack-tame.
This enables us to show that knapsack can be solved in $\LogCFL$ for every group that belongs to the closure of 
hyperbolic groups under free products and direct products with $\Z$.
\end{abstract}

\section{Introduction}

In \cite{MyNiUs14}, Myasnikov, Nikolaev, and Ushakov initiated the investigation of  
discrete optimization problems, which are  usually formulated over the integers,
for arbitrary (possibly non-commutative) groups. One of these problems is the 
{\em knapsack problem} for a finitely generated group $G$:
The input is a sequence of group elements $g_1, \ldots, g_k, g \in G$ (specified
by finite words over the generators of $G$) and it is asked whether there exists a tuple
$(n_1, \ldots, n_k) \in \mathbb{N}^k$
such that $g_1^{n_1} \cdots g_k^{n_k} = g$ in $G$. 
For the particular case $G = \mathbb{Z}$  (where the additive notation 
$n_1 \cdot g_1 + \cdots + n_k \cdot g_k = g$ is usually preferred)
this problem is {\sf NP}-complete (resp., $\mathsf{TC}^0$-complete)
if the numbers $g_1, \ldots, g_k,g \in \Z$ are 
encoded in binary representation \cite{Karp72,Haa11} (resp., unary notation
\cite{ElberfeldJT11}).

In \cite{MyNiUs14}, Myasnikov et al.~encode elements of the finitely generated group $G$ by words over the group generators
and their inverses, which corresponds to the unary encoding of integers. There is also an encoding of words 
that corresponds to the binary encoding of integers, so called straight-line programs, and knapsack problems under this
encoding have been studied in \cite{LohreyZetzsche2016a}. In this paper, we only consider the case where input words are explicitly represented.
Here is a list of known results concerning the knapsack  problem:
\begin{itemize}
\item Knapsack can be solved in polynomial
time for every hyperbolic group \cite{MyNiUs14}. In \cite{FrenkelNU15} this result was extended to free products of any
finite number of hyperbolic groups and finitely generated abelian groups.
\item There are nilpotent groups of class $2$ for which knapsack is undecidable. Examples
are direct products of sufficiently many copies of the discrete Heisenberg group $H_3(\mathbb{Z})$ \cite{KoenigLohreyZetzsche2015a},
and free nilpotent groups of class $2$ and sufficiently high rank \cite{MiTr17}.
\item Knapsack for $H_3(\mathbb{Z})$ is decidable \cite{KoenigLohreyZetzsche2015a}. In particular, 
together with the previous point it follows that decidability
of knapsack is not preserved under direct products.
\item Knapsack is decidable for every co-context-free group~\cite{KoenigLohreyZetzsche2015a}, i.e., groups where the set of all words over the generators that do not represent the identity
is a context-free language. Lehnert and Schweitzer \cite{LehSch07} have shown that the Higman-Thompson groups are
co-context-free.
\item Knapsack belongs to $\NP$ for all virtually special groups (finite extensions of subgroups
of graph groups) \cite{LohreyZ18}. The class of virtually special groups is very rich. It contains all Coxeter groups, 
one-relator groups with torsion, fully residually free groups, and  fundamental groups of hyperbolic 3-manifolds.
For graph groups (also known as right-angled Artin groups) a complete classification of the complexity
of knapsack was obtained in \cite{LohreyZ18}: If the underlying graph contains an induced path or cycle on 4 nodes, then knapsack
is $\NP$-complete; in all other cases knapsack can be solved in polynomial time (even in {\sf LogCFL}).
\item Decidability of knapsack is preserved under finite extensions, HNN-extensions over finite associated subgroups 
and amalgamated free products over finite subgroups \cite{LohreyZetzsche2016a}.
\end{itemize}
In this paper we further investigate the knapsack problem in hyperbolic groups. The definition of hyperbolic groups requires that
all geodesic triangles in the Cayley-graph are $\delta$-slim for a constant $\delta$; see Section~\ref{sec-hyp} for details. The class of hyperbolic groups has
several alternative characterizations (e.g., it is the class of finitely generated groups with a linear Dehn function), which
gives hyperbolic groups a prominent role in geometric group theory.  Moreover, in a certain probabilistic sense, almost all finitely presented groups are hyperbolic \cite{Gro87,Olsh92}.
 Also from a computational viewpoint, hyperbolic groups have nice properties: it is known that the word problem and the conjugacy 
 problem can be solved in linear time
 \cite{EpsteinH06,Hol00}. As mentioned above, knapsack can be solved in polynomial
time for every hyperbolic group \cite{MyNiUs14}. Our first main result of this paper provides a precise characterization of the complexity of knapsack for hyperbolic groups:
for every hyperbolic group, knapsack belongs to $\LogCFL$, which is the class of all problems that are logspace-reducible
to a context-free language. $\LogCFL$ has several alternative characterizations, see Section~\ref{sec-logcfl} for details. The $\LogCFL$ upper bound
for knapsack in hyperbolic groups improves the polynomial upper bound shown in \cite{MyNiUs14}, and also generalizes a result from \cite{Lo05ijfcs}, stating
that the word problem for a hyperbolic group is in $\LogCFL$. For hyperbolic groups that contain a copy of a non-abelian free group (such hyperbolic
groups are called non-elementary) it follows from  \cite{LohreyZ18} that knapsack is $\LogCFL$-complete. Hyperbolic groups 
that contain no copy of a non-abelian free group (so called elementary hyperbolic groups) are known to be virtually cyclic, in which case
knapsack belongs to nondeterministic logspace ($\NL$), which is contained in $\LogCFL$.

In Section~\ref{sec-hyp-ksl} we prove our second main result: for every hyperbolic group $G$ and  every tuple $(g, g_1, \ldots, g_k)$ of elements of $G$ the set of all $(n_1, \ldots, n_k) \in \N^k$
such that $g = g_1^{n_1} \cdots g_k^{n_k}$ in $G$ is effectively semilinear. In other words: the set of all solutions of a knapsack instance in $G$ is semilinear.
Groups with this property are also called knapsack-semilinear. For the special case $G = \mathbb{Z}$ this is well-known (the set of solutions of a linear equation is 
Presburger definable and hence semilinear). Clearly, knapsack is decidable for every knapsack-semilinear group (due to the effectiveness assumption). In a series of 
recent papers it turned out that the class of knapsack-semilinear groups is surprisingly rich. It contains all virtually special groups~\cite{DBLP:journals/corr/LohreyZ15}
and all co-context-free group~\cite{KoenigLohreyZetzsche2015a} and is closed under the following constructions:
\begin{itemize}
\item going to a finitely generated subgroup (this is trivial) and going to a finite group extension \cite{LohreyZetzsche2016a},
\item HNN-extensions over finite associated subgroups and amalgamated free products over finite subgroups \cite{LohreyZetzsche2016a},
\item direct products (in contrast, the class of groups with a decidable knapsack problem is not closed under direct products), 
\item restricted wreath products \cite{GanardiKLZ18}.
\end{itemize}
Our proof of the knapsack-semilinearity of a hyperbolic group shows an additional quantitative statement: If the group  elements
$g, g_1, \ldots, g_k$ are represented by words over the generators and the total length of these words is $N$, then the set 
$\{ (n_1, \ldots, n_k) \in \N^k \mid g = g_1^{n_1} \cdots g_k^{n_k} \text{ in } G\}$ has a semilinear representation, where all vectors
only contain integers of size at most $p(N)$. Here, $p(x)$ is a fixed polynomial that only depends on $G$. Groups with this property are called knapsack-tame
in \cite{LohreyZ18}. In \cite{LohreyZ18}, it is shown that the class of knapsack-tame groups is closed under free products and 
direct products with $\mathbb{Z}$. Using this, we can 
show in Section~\ref{sec-more-logcfl} that knapsack can be solved in $\LogCFL$ for every group that belongs to the closure of 
hyperbolic groups under free products and direct products with $\Z$.

Recently, it was shown that the compressed version of the knapsack problem, 
where input words are encoded by straight-line programs,
is $\NP$-complete for every infinite hyperbolic group \cite{HoLoSchl19}.

\section{General notations}

We assume that the reader is familiar with basic concepts from group theory and formal languages. 
The empty word is denoted with $\varepsilon$.
For a word $w = a_1 a_2 \cdots a_n$ let $|w|=n$ be the length of $w$,
and for $1 \leq i \leq j \leq n$ let $w[i] = a_i$, $w[i:j] = a_i \cdots a_j$,
$w[:i] = w[1:i]$ and $w[i:] = w[i:n]$.
Moreover, let $w[i:j] = \varepsilon$ for $i > j$.

A set of vectors $A \subseteq \mathbb{N}^k$ is {\em linear} if there exist vectors
$v_0, \ldots, v_n \in \mathbb{N}^k$ such that 
$A = \{ v_0 + \lambda_1 \cdot v_1 + \cdots +  \lambda_n \cdot v_n \mid \lambda_1,\ldots,\lambda_n\in\N\}$.
The tuple of vectors $(v_0, \ldots, v_n)$ is a \emph{linear represention} of $A$.
Its {\em magnitude} is the largest number appearing in one the vectors $v_0, \ldots, v_n$.
A set $A \subseteq \mathbb{N}^k$ is {\em semilinear} if it is a finite union of linear sets $A_1, \ldots, A_m$.
A {\em semilinear representation} of $A$ is a list of linear representations for the linear sets $A_1, \ldots, A_m$.
Its {\em magnitude} is the maximal magnitude of the linear representations for the sets  $A_1, \ldots, A_m$.
The magnitude of a semilinear set $A$ is the smallest magnitude among all semilinear representations of $A$.

In the context of knapsack problems, we will consider 
semilinear sets as  sets of mappings $f : \{x_1, \ldots, x_k\} \to \N$ for a finite set of variables
$X = \{x_1, \ldots, x_k\}$. Such a mapping $f$ can be identified with the vector $(f(x_1), \ldots, f(x_k))$.
This allows to use all vector operations (e.g. addition and scalar multiplication) on the set
$\N^X$ of all mappings from $X$ to $\N$. The pointwise product $f \cdot g$ of two mappings
$f,g \in \N^X$ is defined by $(f \cdot g)(x) = f(x) \cdot g(x)$ for all $x \in X$. Moreover, for mappings
$f \in \N^X$, $g \in \N^Y$ with $X \cap Y = \emptyset$ we define $f \oplus g : X \cup Y \to \N$
by $(f \oplus g)(x) = f(x)$ for $x \in X$ and  $(f \oplus g)(y) = g(y)$ for $y \in Y$. All operations on
$\N^X$ will be extended to subsets of $\N^X$ in the standard pointwise way.

It is well-known that the semilinear subsets of $\N^k$ 
are exactly the sets definable in {\em Presburger arithmetic}. These are 
those sets that can be defined with a first-order formula $\varphi(x_1, \ldots, x_k)$ over the structure
$(\N,0,+,\le)$~\cite{GinsburgSpanier1966}. Moreover, the transformations between such a first-order formula and an equivalent  semilinear  representation
are effective. In particular, the semilinear sets are effectively closed under Boolean operations.

\section{Hyperbolic groups} \label{sec-hyp}

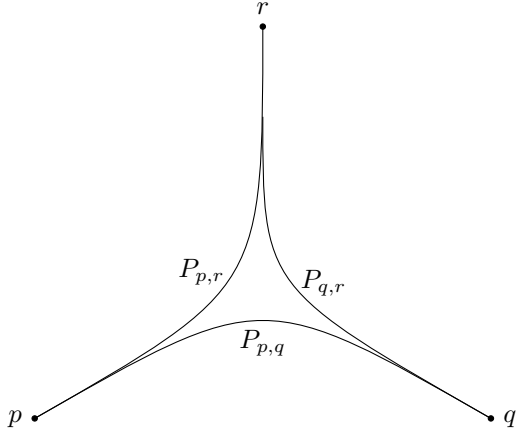
\begin{figure}[t]
 \centering{
 \scalebox{1}{
\begin{tikzpicture}
\tikzstyle{small} = [circle,draw=black,fill=black,inner sep=.25mm]
\node (p) at (0,0) [small, label=left:$p$] {};
\node (q) at (6,0) [small, label=right:$q$] {};
\node (r) at (3,5.19615) [small, label=above:$r$] {};
\draw (0,0) .. controls (3,1.73205)  .. node[pos=.5,below=0mm] {$P_{p,q}$} (6,0);
\draw (0,0) .. controls (3,1.73205)  .. node[pos=.5,above=0mm, left=0mm] {$P_{p,r}$} (3,5.19615);
\draw (3,4) .. controls (3,1.73205)  .. node[pos=.5,above=0mm, right=0mm] {$P_{q,r}$} (6,0);
\end{tikzpicture}
}}
\caption{\label{fig-geo-tri}The shape of a geodesic triangle in a hyperbolic group}
\end{figure}

Let $G$ be a finitely generated group with the finite symmetric generating set $\Sigma$, i.e., $a \in \Sigma$ implies that 
$a^{-1} \in \Sigma$.
The  {\em Cayley-graph} of $G$ (with respect to $\Sigma$) is the undirected  graph $\Gamma = \Gamma(G)$ with node set
$G$ and all edges $(g,ga)$ for $g \in G$ and $a \in \Sigma$. We view $\Gamma$ as a geodesic metric space,
where every edge $(g,ga)$ is identified with a unit-length interval. It is convenient to label the directed edge
from $g$ to $ga$ with the generator $a$.
The distance between two points $p,q$ is denoted with $d_\Gamma(p,q)$.
For $g \in G$ let $|g| = d_\Gamma(1,g)$.
For $r \geq 0$,  let
$\mathcal{B}_r(1) = \{ g \in G \mid d_\Gamma(1,g) \leq r \}$.

Paths can be defined in a very general way for metric spaces, but we only need paths
that are induced by words over $\Sigma$.
Given a word $w \in \Sigma^*$ of length $n$, one obtains a unique path $P[w] : [0,n] \to \Gamma$,
which is a continuous mapping from the real interval $[0,n]$ to $\Gamma$.
It maps the subinterval $[i,i+1] \subseteq [0,n]$  isometrically 
onto the edge $(g_i, g_{i+1})$ of $\Gamma$, where $g_i$ (resp., $g_{i+1})$ is the group element
represented by the word $w[:i]$ (resp., $w[:i+1]$).
The path $P[w]$ starts in $1 = g_0$ and ends in $g_n$ (the group element represented by $w$).
We also say that $P[w]$ is the unique path that starts in $1$ and is labelled with the word $w$.
More generally, for $g \in G$ we denote
with $g \cdot P[w]$ the path that starts in $g$  and is labelled with $w$.
When writing $u \cdot P[w]$ for a word $u \in \Sigma^*$, we mean the path
$g \cdot P[w]$, where $g$ is the group element represented by $u$.
A path  $P : [0,n] \to \Gamma$ of the above form is geodesic if $d_\Gamma(P(0),P(n)) = n$; it is
a $(\lambda,\epsilon)$-{\em quasigeodesic} if for all points
$p = P(a)$ and $q = P(b)$ we have $|a-b| \leq \lambda \cdot d_\Gamma(p,q) + \varepsilon$;
and it is $\zeta$-{\em local} $(\lambda,\epsilon)$-{\em quasigeodesic} if for all points
$p = P(a)$ and $q = P(b)$ with $|a-b| \le \zeta$ we have $|a-b| \leq \lambda \cdot d_\Gamma(p,q) + \varepsilon$.

A word $w \in \Sigma^*$ is geodesic if the path $P[w]$ is geodesic, which means
that there is no shorter word representing the same group element from $G$.
Similarly, we define the notion of ($\zeta$-local) $(\lambda,\epsilon)$-quasigeodesic words.
A word $w \in \Sigma^*$ is {\em shortlex reduced} if it is the length-lexicographically smallest word 
that represents the same group element as $w$. For this, we have to fix an arbitrary linear
order on $\Sigma$. 
Note that if $u = xy$ is shortlex reduced then $x$ and $y$ are shortlex reduced  too.
For a word $u \in \Sigma^*$ we denote
with $\shlex(u)$ the unique shortlex reduced word that represents the same group element as $u$.

A {\em geodesic triangle} consists of three points $p,q,r \in G$ and geodesic paths $P_1 = P_{p,q}$, $P_2 = P_{p,r}$, $P_3 = P_{q,r}$
(the three sides of the triangle),
where $P_{x,y}$ is a geodesic path from $x$ to $y$. We call a geodesic triangle {\em $\delta$-slim}
for $\delta \geq 0$, if for all $i \in \{1,2,3\}$, every point on $P_i$ has distance at most 
$\delta$ from a point on $P_j \cup P_k$, where $\{j,k\} = \{1,2,3\} \setminus \{i\}$.
The group $G$ is called  {\em $\delta$-hyperbolic}, if  every geodesic triangle is 
$\delta$-slim. Finally, $G$ is  hyperbolic, if it is  $\delta$-hyperbolic
for some $\delta \geq 0$. Figure~\ref{fig-geo-tri} shows the shape of a geodesic triangle in a hyperbolic group.
Finitely generated free groups are for instance $0$-hyperbolic.
The property of being hyperbolic is independent of the chosen generating set $\Sigma$. 
The word problem for every hyperbolic group can be decided in real time \cite{Hol00}. 

Let us fix a $\delta$-hyperbolic group $G$ with the finite symmetric generating set $\Sigma$
for the rest of the section, and let $\Gamma$ be the corresponding geodesic metric space.
We will apply a couple of well-known results for hyperbolic groups.

\begin{lemma}[c.f.~\mbox{\cite[8.21]{ghys1990groupes}}] \label{lemma-cyclic-words-quasi-geo}
Let $g \in G$ be of infinite order and let $n \geq 0$. 
Let $u$ be a geodesic word representing $g$.
Then the word  $u^n$ is $(\lambda,\epsilon)$-quasigeodesic, where 
$\lambda = N|g|$, $\epsilon = 2N^2 |g|^2  + 2N |g|$ and $N = |\mathcal{B}_{2\delta}(1)|$. 
\end{lemma}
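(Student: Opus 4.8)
The plan is to reduce the assertion to a linear lower bound on the growth of $|g^m|$, and then transfer that bound from the endpoints of the path $P[u^n]$ to arbitrary pairs of points on it. The first ingredient is a pigeonhole observation: since $g$ has infinite order, the $N+1$ elements $g^0, g^1, \ldots, g^N$ are pairwise distinct and hence cannot all lie in $\mathcal{B}_{2\delta}(1)$, which has exactly $N$ elements; as $g^0 = 1 \in \mathcal{B}_{2\delta}(1)$, there is some $n_0 \in \{1, \ldots, N\}$ with $|g^{n_0}| > 2\delta$.

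The heart of the proof is to upgrade this to the bound $|g^m| \ge m/N$ for all $m \ge 0$. This is the quantitative version of the classical fact that an infinite-order element $g$ of a hyperbolic group has positive stable translation length $\tau(g) := \lim_{m \to \infty} |g^m|/m$ (the limit exists since $m \mapsto |g^m|$ is subadditive, and then $|g^m| \ge m\,\tau(g)$ for every $m$); the point is the explicit estimate $\tau(g) \ge 1/N$, obtained from the seed $n_0$ by a slim-triangle (``divergence'') argument applied to the geodesic triangles with vertices $1$, $g^a$, $g^{a+b}$, whose side from $g^a$ to $g^{a+b}$ is the $g^a$-translate of a geodesic for $g^b$. \emph{I expect this step to be the main obstacle}: the naive superadditive estimate $|g^{a+b}| \ge |g^a| + |g^b| - 2\delta$ is \emph{false} when $g$ is not cyclically reduced (already for $g = xyx^{-1}$ in a free group), so the linear lower bound cannot be read off from the word $u^n$ directly, and the argument must be run more carefully — using the conjugacy-invariance of $\tau(g)$ to move to a cyclic conjugate of $u$ where the slim-triangle estimate does bite, and then controlling the passage back so that only $|g|$ and $N$ enter the final constants.

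Granting $|g^m| \ge m/N$, the transfer is routine. Fix points $x = P[u^n](a)$ and $y = P[u^n](b)$ with $0 \le a \le b \le n|g|$ and write $d = d_\Gamma(x,y)$. Let $i = \lceil a/|g| \rceil$ and $j = \lfloor b/|g| \rfloor$; when $i \le j$, the elements $g^i, g^{i+1}, \ldots, g^j$ are exactly the powers of $g$ lying on $P[u^n]$ with parameter in $[a,b]$. If $i > j$ then $b - a < 2|g|$ and the required inequality follows from the $\epsilon$-term alone. Otherwise $d_\Gamma(x, g^i) \le i|g| - a < |g|$ and $d_\Gamma(y, g^j) \le b - j|g| < |g|$, so by the triangle inequality, left-invariance of $d_\Gamma$, and the lower bound,
\[
 d \ge d_\Gamma(g^i, g^j) - 2|g| = |g^{j-i}| - 2|g| \ge \frac{j-i}{N} - 2|g| ,
\]
hence $j - i < N d + 2N|g|$. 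Since $(i-1)|g| < a$ and $b < (j+1)|g|$ we also have $b - a < (j - i + 2)|g|$, and combining,
\[
 b - a < (N d + 2N|g| + 2)\,|g| = N|g| \cdot d + 2N|g|^2 + 2|g| \le N|g| \cdot d + 2N^2|g|^2 + 2N|g| ,
\]
using $N \ge 1$ and $|g| \ge 1$ in the last step. As $p = x$ and $q = y$ were arbitrary points of $P[u^n]$, this is precisely the statement that $u^n$ is $(\lambda, \epsilon)$-quasigeodesic with $\lambda = N|g|$ and $\epsilon = 2N^2|g|^2 + 2N|g|$.
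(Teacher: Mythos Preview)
The paper does not prove this lemma; it is quoted from the cited reference \cite{ghys1990groupes} without argument, so there is no in-paper proof to compare your proposal against. The following is an assessment of the proposal on its own.

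Your overall strategy is the standard one, and Step~3 is correct: once $|g^m|\ge m/N$ holds for all $m\ge 0$, rounding the parameters $a,b$ to the nearest multiples of $|g|$ and using the triangle inequality gives $b-a<N|g|\,d+2N|g|^2+2|g|\le \lambda d+\epsilon$, with slack in the last inequality. The pigeonhole in Step~1 is also correct, and the reduction of Step~2 to the single estimate $\tau(g)\ge 1/N$ via Fekete's subadditive lemma is clean.

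The gap is exactly where you flag it: establishing $\tau(g)\ge 1/N$. You correctly diagnose that the naive near-superadditivity $|g^{a+b}|\ge |g^a|+|g^b|-2\delta$ is false, and you gesture at a fix by passing to a cyclic conjugate, but you do not carry it out---and this is where the entire content of the lemma lies. In particular, the seed from Step~1 (some $n_0\le N$ with $|g^{n_0}|>2\delta$) does not by itself bound $\tau(g^{n_0})$ from below: a long element can be conjugate to one of arbitrarily small translation length, so nothing prevents $|g^{k n_0}|$ from growing sublinearly in $k$ on the basis of that information alone. What is actually needed is a genuine geometric argument---in the source, essentially a local-to-global principle for local quasigeodesics, applied after replacing $g$ by a suitable cyclically short conjugate and tracking the constants through the conjugation. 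As written, your proposal is an honest outline with the decisive step missing; to make it a proof you must supply that argument with the stated constants, for which you should consult the reference.
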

Consider two paths $P_1 : [0,n_1] \to \Gamma$, $P_2 : [0,n_2] \to \Gamma$ and let $K$ be a positive
real number.
We say that $P_1$ and $P_2$ {\em asynchronously $K$-fellow travel} if there exist two continuous non-decreasing mappings
$\varphi_1 : [0,1] \to [0,n_1]$ and $\varphi_2 : [0,1] \to [0,n_2]$ such that $\varphi_1(0) = \varphi_2(0) = 0$, $\varphi_1(1) = n_1$,
$\varphi_2(1) = n_2$ and for all $0 \leq t \leq 1$, $d_\Gamma(P_1(\varphi_1(t)), P_2(\varphi_2(t))) \leq K$.
Intuitively, this means that one can travel along the paths $P_1$ and $P_2$ asynchronously with variable speeds such that
at any time instant the current points have distance at most $K$. By slightly increasing $K$ one obtains a ladder graph of 
the form shown in Figure~\ref{fig-ladder}, where the edges connecting the horizontal $P_1$- and $P_2$-labelled 
paths represent paths of length at most $K$ that connect elements from $G$.

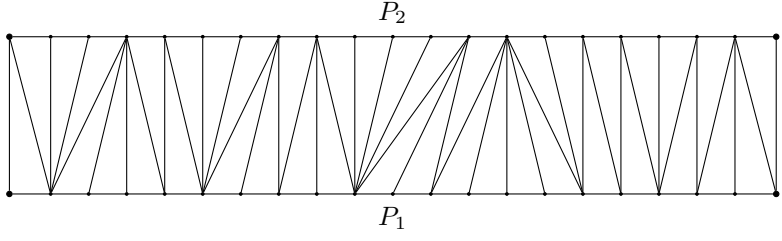
\begin{figure}[t]
 \centering{
 \scalebox{1}{
\begin{tikzpicture}
  \tikzstyle{small} = [circle,draw=black,fill=black,inner sep=.25mm]
  \tikzstyle{tiny} = [circle,draw=black,fill=black,inner sep=.1mm]
  \tikzstyle{zero} = [circle,inner sep=0mm]

    \node[small] (a1) {} ;
    \node[small,  right = 10cm of a1] (a2) {};
    \node[small,  above = 2cm of a1] (b1) {};
    \node[small,  right = 10cm of b1] (b2) {};
    
    \node[zero,  below = 2.5mm of a1] (a1') {} ;
    \node[zero,  below = 2.5mm of a2] (a2') {} ;
    \node[zero,  above = 2.5mm of b1] (b1') {} ;
    \node[zero,  above = 2.5mm of b2] (b2') {} ;
    
       \draw [-] (a1) to   
      node[pos=.5,below=.5mm]  {$P_1$}   
      node[pos=.05, tiny] (1) {}  
      node[pos=.1, tiny] (2) {}  
      node[pos=.15, tiny] (3) {} 
      node[pos=.2, tiny] (4) {}  
      node[pos=.25, tiny] (5) {}  
      node[pos=.3, tiny] (6) {}  
      node[pos=.35, tiny] (7) {}  
      node[pos=.4, tiny] (8) {}  
      node[pos=.45, tiny] (9) {}  
      node[pos=.5, tiny] (10) {}  
      node[pos=.55, tiny] (11) {}  
      node[pos=.6, tiny] (12) {}  
      node[pos=.65, tiny] (13) {}  
      node[pos=.7, tiny] (14) {}  
      node[pos=.75, tiny] (15) {}  
      node[pos=.8, tiny] (16) {}  
      node[pos=.85, tiny] (17) {} 
      node[pos=.9, tiny] (18) {} 
      node[pos=.95, tiny] (19) {}  
        (a2);
        
        \draw [-] (b1) to   
      node[pos=.5,above=.5mm]  {$P_2$}   
      node[pos=.05, tiny] (1') {}  
      node[pos=.1, tiny] (2') {}  
      node[pos=.15, tiny] (3') {} 
      node[pos=.2, tiny] (4') {}  
      node[pos=.25, tiny] (5') {}  
      node[pos=.3, tiny] (6') {}  
      node[pos=.35, tiny] (7') {}  
      node[pos=.4, tiny] (8') {}  
      node[pos=.45, tiny] (9') {}  
      node[pos=.5, tiny] (10') {}  
      node[pos=.55, tiny] (11') {}  
      node[pos=.6, tiny] (12') {}  
      node[pos=.65, tiny] (13') {}  
      node[pos=.7, tiny] (14') {}  
      node[pos=.75, tiny] (15') {}  
      node[pos=.8, tiny] (16') {}  
      node[pos=.85, tiny] (17') {} 
      node[pos=.9, tiny] (18') {} 
      node[pos=.95, tiny] (19') {}  
        (b2);

       \draw [-] (a1) to  (b1);
      \draw [-] (1) to  (b1);               
      \draw [-] (1) to (1');
      \draw [-] (1) to  (2');
      \draw [-] (1) to  (3');
      \draw [-] (2) to  (3');
      \draw [-] (3) to  (3');
      \draw [-] (4) to  (3');
      \draw [-] (4) to  (4');
      \draw [-] (5) to  (4');
      \draw [-] (5) to  (5');    
      \draw [-] (5) to  (6');
      \draw [-] (5) to  (7');
      \draw [-] (6) to  (7');
      \draw [-] (7) to  (7');
      \draw [-] (7) to  (8');
      \draw [-] (8) to  (8');
      \draw [-] (9) to  (8');
      \draw [-] (9) to  (9');
      \draw [-] (9) to  (10');
      \draw [-] (9) to  (11');
      \draw [-] (9) to  (12');
       \draw [-] (10) to  (12');
       \draw [-] (11) to  (12');
       \draw [-] (11) to  (13');
       \draw [-] (12) to  (13');
       \draw [-] (13) to  (13');
       \draw [-] (14) to  (13');
       \draw [-] (15) to  (13');
       \draw [-] (15) to  (14');
       \draw [-] (15) to  (15');
       \draw [-] (16) to  (15');
       \draw [-] (16) to  (16');
       \draw [-] (17) to  (16');
       \draw [-] (17) to  (17');
       \draw [-] (17) to  (18');
       \draw [-] (18) to  (18');
       \draw [-] (18) to  (19');
       \draw [-] (19) to  (19');
        \draw [-] (a2) to  (19');
    \draw [-] (a2) to  (b2);
   \end{tikzpicture}}}
\caption{\label{fig-ladder} Paths that asynchronously $K$-fellow travel}
  \end{figure}

\begin{lemma}[c.f.~\cite{MyNi14}] \label{lemma-asynch-fellow-travel}
Let $P_1$ and $P_2$ be $(\lambda,\epsilon)$-quasigeodesic paths in $\Gamma_G$ and assume that
$P_i$ starts in $g_i$ and ends in $h_i$. Assume that $d_\Gamma(g_1,g_2), d_\Gamma(h_1,h_2) \leq h$.
Then there exists a computable bound 
$K = K(\delta, \lambda,\epsilon, h) \geq h$ such that $P_1$ and $P_2$ 
asynchronously $K$-fellow travel.
\end{lemma}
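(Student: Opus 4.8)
The plan is to reduce the claim, in two steps, to the case where $P_1$ and $P_2$ are geodesics, and then to compose the resulting relations. First I would invoke the stability of quasigeodesics in $\delta$-hyperbolic spaces (the Morse lemma): there is a computable constant $M = M(\delta,\lambda,\epsilon)$ such that every $(\lambda,\epsilon)$-quasigeodesic path asynchronously $M$-fellow travels some geodesic path with the same endpoints. Applied to $P_1$ and $P_2$ this produces geodesic paths $\gamma_1$ from $g_1$ to $h_1$ and $\gamma_2$ from $g_2$ to $h_2$ with $P_i$ asynchronously $M$-fellow travelling $\gamma_i$. Upgrading the usual Hausdorff-distance form of the Morse lemma to this asynchronous fellow-travelling form is routine: the nearest-point projection of a quasigeodesic onto a geodesic is coarsely monotone (because a quasigeodesic coarsely passes through geodesics joining its points, and nearest-point projection sends geodesics coarsely to geodesics), so replacing this projection by its running supremum and interpolating linearly yields continuous non-decreasing reparametrisations at the cost of enlarging $M$ by a bounded amount.

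Next I would prove the lemma for two geodesics $\gamma_1 : [0,a] \to \Gamma$ and $\gamma_2 : [0,b] \to \Gamma$ with $d_\Gamma(\gamma_1(0),\gamma_2(0)) \le h$ and $d_\Gamma(\gamma_1(a),\gamma_2(b)) \le h$. Pick geodesics $\sigma_0$ from $\gamma_1(0)$ to $\gamma_2(0)$ and $\sigma_1$ from $\gamma_1(a)$ to $\gamma_2(b)$, each of length at most $h$; then $\gamma_1,\sigma_1,\gamma_2,\sigma_0$ bound a geodesic quadrilateral. Cutting it into two geodesic triangles by a diagonal and applying $\delta$-slimness to each, every point of $\gamma_1$ is within $2\delta$ of $\gamma_2 \cup \sigma_0 \cup \sigma_1$, hence within $2\delta + h$ of $\gamma_2$ (as $\sigma_0$ and $\sigma_1$ have length at most $h$ and each meets $\gamma_2$); symmetrically for $\gamma_2$. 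As before, the nearest-point projection $\gamma_1 \to \gamma_2$ is coarsely monotone because both paths are geodesics, so a running-supremum-and-interpolation argument turns it into continuous non-decreasing reparametrisations $\varphi_1,\varphi_2$ witnessing that $\gamma_1$ and $\gamma_2$ asynchronously $K'$-fellow travel for a computable $K' = K'(\delta,h)$.

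Finally, asynchronous fellow-travelling composes: if $A$ asynchronously $K_1$-fellow travels $B$ and $B$ asynchronously $K_2$-fellow travels $C$, then $A$ asynchronously $(K_1+K_2)$-fellow travels $C$, since the two (a priori different) reparametrisations of $B$ are continuous non-decreasing surjections onto the same interval and hence admit a common monotone refinement along which the two relations can be synchronised. Chaining $P_1 \rightsquigarrow \gamma_1 \rightsquigarrow \gamma_2 \rightsquigarrow P_2$ then gives the lemma with $K = 2M(\delta,\lambda,\epsilon) + K'(\delta,h)$, which we may assume is at least $h$; all constants are computable. I expect the main obstacle to be not the geometry — the Morse lemma and the slimness of the quadrilateral are standard — but the bookkeeping required to convert the coarse, set-valued nearest-point projections into genuine continuous monotone reparametrisations and to compose them while keeping explicit, computable control of the constants.
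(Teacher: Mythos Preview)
The paper does not prove this lemma at all; it is quoted from \cite{MyNi14} without argument, so there is nothing to compare your approach against. Your outline is the standard one and is correct: Morse lemma to replace each quasigeodesic by a geodesic with the same endpoints, thinness of the resulting geodesic quadrilateral (cut by a diagonal into two $\delta$-slim triangles) to handle the geodesic case, and transitivity of asynchronous fellow-travelling to chain the three relations. The composition step you worry about is fine: if $\psi_1,\psi_2:[0,1]\to[0,|B|]$ are the two continuous non-decreasing surjections parametrising $B$, the set $\{(t,s):\psi_1(t)=\psi_2(s)\}$ is a closed connected monotone staircase from $(0,0)$ to $(1,1)$ in $[0,1]^2$, and any continuous monotone parametrisation of it gives the common refinement you need. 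The coarse monotonicity of nearest-point projection (both quasigeodesic-to-geodesic and geodesic-to-geodesic) is likewise standard in $\delta$-hyperbolic spaces and yields computable constants, so the final bound $K=2M(\delta,\lambda,\epsilon)+K'(\delta,h)$ is computable as claimed.
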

Finally we need the following lemma for splitting quasigeodesic rectangles:  
  
  \begin{lemma} \label{lemma-rectangle}
 Fix constants $\lambda,\epsilon$ and let $\kappa = K(\delta,\lambda,\epsilon,0)$ be taken from
 Lemma~\ref{lemma-asynch-fellow-travel}. 
Let $v_1, v_2 \in \Sigma^*$ be geodesic words 
and $u_1, u_2 \in \Sigma^*$ $(\lambda,\epsilon)$-quasigeodesic words
such that $v_1 u_1  =  u_2 v_2$ in $G$. Consider 
a factorization $u_1 = x_1y_1$ with $|x_1| \ge \lambda (|v_1| + 2 \delta + \kappa) + \epsilon$
and $|y_1| \ge \lambda (|v_2| + 2 \delta  + \kappa) + \epsilon$
Then there exists a factorization $u_2 = x_2 y_2$ and $c \in \mathcal{B}_{2\delta + 2\kappa}(1)$ such
that $v_1 x_1 = x_2 c$ and $c y_1 = y_2 v_2$ in $G$. 
\end{lemma}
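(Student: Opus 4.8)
The plan is to cut the quasigeodesic rectangle by a geodesic diagonal and then transport the prescribed splitting point of $u_1$ across the two resulting geodesic triangles, combining $\delta$-slimness with the stability of quasigeodesics. The decisive observation is that $\kappa=K(\delta,\lambda,\epsilon,0)$ is exactly the constant of Lemma~\ref{lemma-asynch-fellow-travel}, so any $(\lambda,\epsilon)$-quasigeodesic path and any geodesic path sharing both endpoints asynchronously $\kappa$-fellow travel (a geodesic being in particular $(\lambda,\epsilon)$-quasigeodesic, as $\lambda\ge 1$). Accordingly I would realize $v_1u_1=u_2v_2$ as a quadrilateral in $\Gamma$ with corners $1$, $v_1$, $w:=v_1u_1=u_2v_2$ and $u_2$, whose sides are the geodesic $P[v_1]$ from $1$ to $v_1$, the $(\lambda,\epsilon)$-quasigeodesic $\pi:=v_1\cdot P[u_1]$ from $v_1$ to $w$, the $(\lambda,\epsilon)$-quasigeodesic $P[u_2]$ from $1$ to $u_2$, and the geodesic $u_2\cdot P[v_2]$ from $u_2$ to $w$. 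Fix a geodesic $D$ from $1$ to $w$; it splits the quadrilateral into a geodesic triangle $\Delta_1$ on $\{1,v_1,w\}$, whose side from $v_1$ to $w$ we take to be a geodesic $g_1$ (the other two sides being $P[v_1]$ and $D$), and a geodesic triangle $\Delta_2$ on $\{1,u_2,w\}$, whose side from $1$ to $u_2$ we take to be a geodesic $g_2$ (the other two sides being $u_2\cdot P[v_2]$ and $D$). Let $p^\ast:=v_1x_1$ be the point of $\pi$ at parameter $|x_1|$; it is a genuine group element since $|x_1|\in\N$.

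First I would transport $p^\ast$ from $\pi$ onto $D$. As $\pi$ and $g_1$ share both endpoints, they asynchronously $\kappa$-fellow travel, so $p^\ast$ is within $\kappa$ of a point of $g_1$, which by $\delta$-slimness of $\Delta_1$ is within $\delta$ of a point $q$ of $P[v_1]\cup D$; hence $d_\Gamma(p^\ast,q)\le\delta+\kappa$. To see that $q$ must lie on $D$, suppose instead $q\in P[v_1]$; then $d_\Gamma(p^\ast,v_1)\le |v_1|+\delta+\kappa$, since $P[v_1]$ has length $|v_1|$ and ends at $v_1$. On the other hand $x_1$ is a prefix of the $(\lambda,\epsilon)$-quasigeodesic word $u_1$, hence itself $(\lambda,\epsilon)$-quasigeodesic, so by left-invariance of $d_\Gamma$ one gets $d_\Gamma(p^\ast,v_1)=d_\Gamma(1,x_1)\ge(|x_1|-\epsilon)/\lambda\ge |v_1|+2\delta+\kappa$, using the hypothesis $|x_1|\ge\lambda(|v_1|+2\delta+\kappa)+\epsilon$ --- a contradiction (in the degenerate case $\delta=0$ one simply takes $q=v_1$, which lies on $D$). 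Thus $q\in D$. Symmetrically, by $\delta$-slimness of $\Delta_2$ the point $q$ is within $\delta$ of a point $q'$ of $g_2\cup(u_2\cdot P[v_2])$, and $q'$ must lie on $g_2$: if not, then $d_\Gamma(q',w)\le |v_2|$, whereas $y_1$ is a suffix of $u_1$, hence $(\lambda,\epsilon)$-quasigeodesic, so $d_\Gamma(p^\ast,w)=d_\Gamma(1,y_1)\ge(|y_1|-\epsilon)/\lambda\ge |v_2|+2\delta+\kappa$ by the hypothesis on $|y_1|$; combined with $d_\Gamma(p^\ast,q')\le d_\Gamma(p^\ast,q)+d_\Gamma(q,q')\le 2\delta+\kappa$ this forces $d_\Gamma(q',w)=|v_2|$, i.e.\ $q'$ is the start $u_2$ of the geodesic path $u_2\cdot P[v_2]$, which is an endpoint of $g_2$. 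Finally, since $P[u_2]$ and $g_2$ share endpoints, they asynchronously $\kappa$-fellow travel; I would choose $m$ with $d_\Gamma(q',P[u_2](m))\le\kappa$ (rounding to a group element at harmless additive cost, suppressed here) and set $x_2:=u_2[:m]$, $y_2:=u_2[m:]$.

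Chaining the estimates, $d_\Gamma(v_1x_1,x_2)=d_\Gamma(p^\ast,x_2)\le(\delta+\kappa)+\delta+\kappa=2\delta+2\kappa$, so $c:=x_2^{-1}v_1x_1$ lies in $\mathcal{B}_{2\delta+2\kappa}(1)$ and satisfies $v_1x_1=x_2c$ in $G$ by definition. Substituting this into $x_2y_2v_2=u_2v_2=v_1u_1=v_1x_1y_1=x_2cy_1$ and cancelling $x_2$ on the left yields $cy_1=y_2v_2$ in $G$, which is the second required identity. I expect the main obstacle to be the two case analyses that pin the transported point to the \emph{correct} side of each triangle ($D$ rather than $P[v_1]$ in $\Delta_1$, and $g_2$ rather than $u_2\cdot P[v_2]$ in $\Delta_2$): these are precisely what the two length hypotheses on $|x_1|$ and $|y_1|$ are engineered to guarantee, via the quasigeodesic distortion bound $d_\Gamma\ge(\ell-\epsilon)/\lambda$ for a $(\lambda,\epsilon)$-quasigeodesic path of length $\ell$, and they additionally rely on the routine facts that prefixes and suffixes of $(\lambda,\epsilon)$-quasigeodesic words are again $(\lambda,\epsilon)$-quasigeodesic and that $d_\Gamma$ is left-invariant.
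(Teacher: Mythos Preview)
Your argument is correct and is essentially the paper's proof: both replace the quasigeodesic sides by geodesics via $\kappa$-fellow travelling (your $g_1,g_2$ are the paper's $t_1,t_2$), cross the resulting geodesic quadrilateral at cost $2\delta$, and then return to $u_2$ at cost $\kappa$; the only cosmetic difference is that the paper invokes ``geodesic rectangles are $2\delta$-slim'' directly, whereas you unfold this by inserting the diagonal $D$ and applying $\delta$-slimness to the two triangles. One tiny slip: in your $\delta=0$ parenthetical the forced boundary point is $q=1$ (which lies on $D$), not $q=v_1$; you handle the symmetric case for $q'$ correctly.
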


\begin{proof}
The construction is shown in Figure~\ref{lemma-rectangle}.
Let $t_1, t_2, x'_1, y'_1$ be geodesic words with $t_1 = u_1$, $t_2 = u_2$,
$x_1 = x'_1$ and $y_1 = y'_1$  in $G$.
Since $u_1$ is $(\lambda,\epsilon)$-quasigeodesic,
we get $|x'_1| \geq (|x_1|-\epsilon)/\lambda \geq |v_1| + 2 \delta  + \kappa$ and
$|y'_1| \geq (|y_1|-\epsilon)/\lambda \geq |v_2| + 2 \delta  + \kappa$.
By Lemma~\ref{lemma-asynch-fellow-travel}
the paths $P[t_1]$ and $P[u_1$] asynchronously $\kappa$-fellow travel.
Hence, there exists a factorization $t_1 = r_1 s_1$ and $c_1 \in \mathcal{B}_{\kappa}(1)$
such that $r_1 c_1 = x_1 = x'_1$ and $c_1 y'_1 = c_1 y_1 = s_1$ in $G$. This implies $|r_1| \geq |x'_1|-\kappa \geq |v_1| + 2\delta$
and $|s_1| \geq |y'_1|-\kappa \geq |v_2| + 2\delta$. Consider the geodesic rectangle with the paths
$Q_1 = P[v_1]$, $P_1 = v_1 \cdot P[t_1]$, $P_2 = P[t_2]$, and $Q_2 = u_2 \cdot P[v_2]$.
Since geodesic rectangles are $2 \delta$-slim, there exists a point $p_2 \in P_2 \cup Q_1 \cup Q_2$ that
has distance at most $2\delta$ from $p_1 = P_1(|r_1|)$. By the triangle inequality we must have $p_2 \in P_2$. This yields a factorization
$t_2 = r_2 s_2$ (where $p_2 = P_2(|r_2|)$)
and $c' \in \mathcal{B}_{2\delta}(1)$  such that $v_1 r_1 = r_2 c'$ and $c' s_1 = s_2 v_2$ in $G$.  Finally, since
$P[t_2]$ and $P[u_2$] asynchronously $\kappa$-fellow travel, we obtain a factorization $u_2 = x_2 y_2$ 
and $c_2  \in \mathcal{B}_{\kappa}(1)$ such that $x_2 c_2 = r_2$ and $c_2 s_2 = y_2$ in $G$.
Let $c = c_2 c' c_1 \in \mathcal{B}_{2\delta + 2\kappa}(1)$. We get $x_2 c = x_2 c_2 c' c_1 = 
r_2 c' c_1 = v_1 r_1 c_1 = v_1 x_1$ and  $c y_1 = c_2 c' c_1 y_1 = c_2 c' s_1 = c_2 s_2 v_2 = y_2 v_2$.
\end{proof}

\begin{figure}[t]
 \centering{
 \scalebox{1}{
\begin{tikzpicture}
  \tikzstyle{small} = [circle,draw=black,fill=black,inner sep=.15mm]
  \tikzstyle{zero} = [circle,inner sep=0mm]

    \node[small] (1) {} ;
    \node[small,  right = 6cm of 1] (2) {};
    \node[small,  above = 1.5cm of 1] (3) {};
    \node[small,  right = 6cm of 3] (4) {};
          
       \draw [->] (1) to [out=10, in=-190] node[pos=.25,above=-.7mm] {$r_2$} node[pos=0.5, small] (a) {}  node[pos=.75,above=-.7mm]  {$s_2$}  (2);
      \draw [->] (1) to [out=-30, in=-150] node[pos=.25,below=-.7mm] {$x_2$} node[pos=0.5, small] (b) {}  node[pos=.75,,below=-.7mm]  {$y_2$}  (2);
      
      \draw [->] (3) to [out=30, in=-210] node[pos=.25,above=-.7mm] {$x_1$} node[pos=0.5, small] (c) {}  node[pos=.75,above=-.7mm]  {$y_1$}  (4);
      \draw [->] (3) to [out=-10, in=-170] node[pos=.25,below=-.7mm] {$r_1$} node[pos=0.5, small] (d) {}  node[pos=.75,below=-.7mm]  {$s_1$}  (4);
      
     \draw [->](1) edge node[left=-.7mm]{$v_1$} (3);
     \draw [->](2) edge node[right=-.7mm]{$v_2$} (4);
     
      \draw [->](d) edge node[left=-.7mm]{$c_1$} (c);
      \draw [->](a) edge node[left=-.7mm]{$c'$} (d);
      \draw [->](b) edge node[left=-.7mm]{$c_2$} (a);

      \draw [-](3) edge node[below=-.7mm]{$x'_1$} (c);
       \draw [-](c) edge node[below=-.7mm]{$y'_1$} (4);

     \end{tikzpicture}}}
\caption{\label{fig-rectangle} Splitting a quasigeodesic rectangle according to Lemma~\ref{lemma-rectangle}.}
  \end{figure}
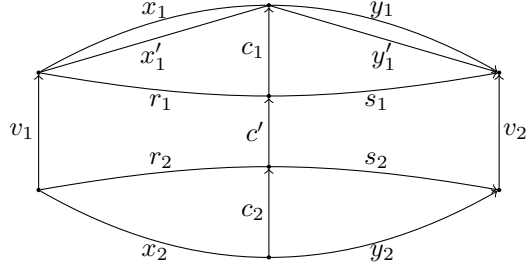

 \section{The complexity class LogCFL} \label{sec-logcfl}

The complexity class $\LogCFL$ consists of all
computational problems that are logspace reducible to a context-free language. 
The class $\LogCFL$ is included in the parallel complexity class
$\mathsf{NC}^2$ and has several alternative characterizations (see e.g.
\cite{Su78,Vol99}):
\begin{itemize}
\item logspace bounded alternating Turing-machines with polynomial tree size,
\item semi-unbounded Boolean circuits of polynomial size and logarithmic depth, and
\item logspace bounded auxiliary pushdown automata with polynomial running time.  
\end{itemize}
For our purposes, the last characterization is most suitable.
An AuxPDA (for auxiliary pushdown automaton)
is a nondeterministic pushdown automaton with a two-way input tape and 
an additional work tape. Here we only consider AuxPDAs with the following two restrictions:
\begin{itemize}
\item The length of the work tape is restricted to $O(\log n)$ for an input of length $n$
(logspace bounded).
\item There is a polynomial $p(n)$, such that every computation path
of the AuxPDA on an input of length $n$ has length at most $p(n)$
(polynomially time bounded).
\end{itemize}
Whenever we speak of an AuxPDA in the following, we implicitly assume that the AuxPDA
is logspace bounded and polynomially time bounded.
The class of languages that are accepted by AuxPDAs is exactly $\LogCFL$ \cite{Su78}.
A {\em one-way} AuxPDA is an AuxPDA that never moves the input head to the left.
Hence, in every step, the input head either does not move, or moves to the right.

For a finitely generated group $G$ with the symmetric generating set $\Sigma$ we 
define the word problem for $G$ (with respect to $\Sigma$) as the set of all words
$w \in \Sigma^*$ such that $w=1$ in $G$.
Let us say that a finitely generated group $G$ belongs to the class OW-AuxPDA if
the word problem for $G$ is recognized by a one-way AuxPDA. It is easy to see
that the latter property is independent of the generating set of $G$ (this holds, since
the class of languages recognized by one-way AuxPDAs is closed under inverse homomorphisms).

\begin{theorem} \label{thm-hyp-OW-AuxPDA}
Every hyperbolic group belongs to the class OW-AuxPDA.
\end{theorem}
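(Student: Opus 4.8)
The plan is to build a one-way AuxPDA that, on input $w \in \Sigma^*$, decides whether $w = 1$ in $G$. The key idea is to exploit the fact that a hyperbolic group has a context-free-like structure coming from its (automatic) shortlex normal form together with the fact that Dehn's algorithm runs in linear time. More precisely, recall that in a $\delta$-hyperbolic group every nonempty word that equals $1$ contains a subword that is more than half of a relator of length at most $8\delta+1$ (say), so Dehn's algorithm repeatedly replaces such a "long half" of a short relator by the complementary shorter half, strictly decreasing the length, and $w=1$ iff this reduces $w$ to $\varepsilon$. The difficulty is that Dehn's algorithm is inherently sequential, so I would instead simulate it with a pushdown: the stack maintains a shortlex-reduced word equivalent to the prefix of $w$ read so far, and for each newly scanned generator $a$ we update the top of the stack using the fact that appending a single generator to a shortlex word and re-reducing only affects a bounded-length suffix of the normal form (this is the "fellow traveller"/$K$-bounded-perturbation property from Lemma~\ref{lemma-asynch-fellow-travel}, applied to $P[\shlex(u)]$ and $P[\shlex(ua)]$, both of which are geodesic hence $(1,0)$-quasigeodesic, with endpoints at distance $\le 1$).

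Concretely, here is the order of steps. First I would fix the constant $K = K(\delta,1,0,1)$ from Lemma~\ref{lemma-asynch-fellow-travel} and observe that $\shlex(u)$ and $\shlex(ua)$ asynchronously $K$-fellow travel; combining this with the fact that shortlex words are geodesic, one gets that there is a constant $c$ (depending only on $G$ and $\Sigma$) such that $\shlex(ua)$ is obtained from $\shlex(u)$ by deleting a suffix of length at most $c$ and appending a word of length at most $c$. (A clean way to see the bound is: the two geodesics $K$-fellow travel, so outside the last $\le \lambda K + \epsilon + O(\delta)$ letters they agree letter-by-letter after an asynchronous matching, but for geodesic words an asynchronous $K$-match forces actual agreement of prefixes up to bounded error — this is where I would be careful.) Second, I would store the current shortlex word on the pushdown stack (top = last letter), with the work tape used only as a constant-size buffer holding the top $c$ symbols. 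Third, on reading each input letter $a$, pop up to $c$ symbols into the buffer, compute $\shlex$ of (buffer content)$\cdot a$ — a bounded computation doable by a finite lookup table hard-coded into the automaton, since we only ever re-reduce words of bounded length against the finitely many short relators — and push the result back. Fourth, at the end of the input, accept iff the stack is empty. Because the automaton reads each input symbol exactly once and never moves left, it is one-way; because it does only bounded work per symbol plus a bounded number of stack operations, and the stack never exceeds $|w|$, the running time is linear and the work tape is of constant (hence $O(\log n)$) size, so the AuxPDA is logspace and polynomially time bounded as required.

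The main obstacle I anticipate is justifying the bounded-perturbation claim, namely that passing from $\shlex(u)$ to $\shlex(ua)$ only changes a bounded-length suffix. The asynchronous fellow traveller lemma gives a reparametrised matching within distance $K$, but a priori this does not immediately yield that long common prefixes of the two normal forms are \emph{identical} — it only says nearby points track each other. The standard fix, which I would carry out, is to use that shortlex words are not merely geodesic but form an automatic structure with the fellow-traveller property in the \emph{synchronous} sense up to a bounded defect: two shortlex words representing elements at distance $\le 1$ stay within a bounded distance of each other synchronously, and a $K$-close pair of points on two geodesics emanating from the same origin forces the prefixes up to those points to be $K$-close, which, combined with $G$ having only finitely many elements in any ball, lets one argue that the normal forms branch only after a bounded error and hence the divergence of $\shlex(u)$ and $\shlex(ua)$ is confined to bounded-length tails. (Alternatively one can avoid fellow-traveller subtleties entirely by invoking directly that hyperbolic groups are (strongly) geodesically automatic, so the shortlex language is regular and satisfies the $K$-fellow-traveller property, giving the bounded-suffix-update property as a black box; I would mention this as the cleaner route.)

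Once this perturbation bound is in hand, the remaining steps are bookkeeping: the transition table of the AuxPDA is a finite object computed once from $\delta$, the relators, and the generating set; correctness follows by the invariant "after reading $w[:i]$, the stack contents spell $\shlex(w[:i])$", proved by induction on $i$; and the complexity bounds are read off directly. This establishes that the word problem of every hyperbolic group is recognised by a logspace, polynomial-time one-way AuxPDA, i.e. that $G$ belongs to OW-AuxPDA.
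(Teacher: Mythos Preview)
Your central claim---that for shortlex-reduced $u$ and a generator $a$, the word $\shlex(ua)$ is obtained from $u$ by deleting a bounded-length suffix and appending a bounded-length word---is false in general, and this is precisely the obstacle you flagged but did not overcome. Take $G = \mathbb{Z}$ with symmetric generating set $\{a^{\pm 1}, b^{\pm 1}\}$ where $a \mapsto 1$, $b \mapsto 2$, ordered with $a$ smallest; this is hyperbolic. Then $b^n$ is the unique geodesic for the element $2n$, so $\shlex(2n) = b^n$, while every geodesic for $2n+1$ has length $n+1$ and consists of one $a$ and $n$ $b$'s, so $\shlex(2n+1) = a b^n$. Hence with $u = b^n$ we get $\shlex(u a) = a b^n$, which shares \emph{no} nonempty prefix with $u$; no bounded-suffix edit can produce it, and your stack invariant cannot be maintained. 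The fellow-traveller property (synchronous or asynchronous, and likewise the automatic structure you invoke as an alternative route) only says that the \emph{group elements} $u[:i]$ and $\shlex(ua)[:i]$ are close in the Cayley graph; it does not force agreement of the \emph{words} as strings, and the example shows these are genuinely different phenomena.

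The paper's proof is entirely different and avoids this issue: it quotes that the word problem of a hyperbolic group is growing context-sensitive \cite{Lo05ijfcs} and that every such language is accepted by a one-way logspace polynomial-time AuxPDA \cite{BuOt98}. If you want a direct construction, replace shortlex by \emph{Dehn-reduced} words on the stack (no factor that is more than half of a short relator): push the next input letter, then repeatedly Dehn-reduce the top $O(\delta)$ symbols (which can be carried in the finite control); each reduction strictly shortens the stack, so although reductions may cascade, the total number of stack operations over the whole run is at most $|w|$, yielding a linear-time deterministic one-way PDA---essentially Holt's real-time algorithm \cite{Hol00}.
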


\begin{proof}
Let $G$ be a hyperbolic group.
In \cite{Lo05ijfcs} it is shown that the word problem for $G$ is a growing context-sensitive language, i.e., it can
be generated by a grammar where all productions are strictly length-increasing (except for the start production $S \to \varepsilon$).
In \cite{BuOt98} it was shown that every growing context-sensitive language can 
be recognized by a one-way AuxPDA in logarithmic space and polynomial time. The result follows.
\end{proof}

\begin{theorem} \label{thm-free-directZ-OW-AuxPDA}
If the groups $G$ and $H$ belong to OW-AuxPDA then also $G*H$ and $G \times \mathbb{Z}$ belong to  OW-AuxPDA.
\end{theorem}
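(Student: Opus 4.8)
The plan is to prove both closure properties by directly simulating given one-way AuxPDAs $M_G$ and $M_H$ for the word problems of $G$ and $H$. The case $G \times \Z$ is easy; the substantial case is $G * H$, for which the pushdown of the new automaton will encode simultaneously the syllable structure of the input word and a stack of suspended runs of $M_G$ and $M_H$.

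For $G \times \Z$, fix a finite symmetric generating set $\Sigma_G$ of $G$ and let $t$ generate the $\Z$-factor, so that $\Sigma = \Sigma_G \cup \{t,t^{-1}\}$ generates $G \times \Z$. A word $w \in \Sigma^*$ equals $1$ in $G \times \Z$ if and only if its $\Sigma_G$-projection (obtained by deleting every $t^{\pm 1}$) equals $1$ in $G$ and the $t$-exponent sum of $w$ is $0$. The new automaton reads $w$ from left to right, runs $M_G$ on the subsequence of $\Sigma_G$-letters while skipping the $t^{\pm 1}$-letters (so $M_G$'s pushdown becomes the new pushdown and $M_G$'s work tape a part of the new work tape), and in parallel keeps on its work tape a binary counter holding the running $t$-exponent sum; since the absolute value of this sum never exceeds $|w|$, the counter uses only $O(\log|w|)$ bits. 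The automaton accepts iff $M_G$ accepts and the counter is $0$; it is clearly one-way, logspace bounded and polynomially time bounded. The argument genuinely uses that the second factor is $\Z$: its pushdown collapses to a single counter that fits on the work tape, which is also why direct products in general cannot be handled this way --- two genuine pushdowns cannot be merged into one.

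For $G * H$, take disjoint finite symmetric generating sets $\Sigma_G$ and $\Sigma_H$ and one-way AuxPDAs $M_G, M_H$. Recall that $w \in (\Sigma_G \cup \Sigma_H)^*$ equals $1$ in $G * H$ if and only if $w$ can be reduced to the empty word by repeatedly deleting a maximal factor that lies in $\Sigma_G^*$ and equals $1$ in $G$, or in $\Sigma_H^*$ and equals $1$ in $H$, maximality being re-evaluated after each deletion. The new automaton carries out such a reduction online, innermost factors first and from left to right. It maintains a stack of syllable \emph{slots}, each representing a group element of $G$ or $H$ that is the product of the letters absorbed into it so far, adjacent slots being of different factors; it does not store these elements explicitly, but keeps each slot as a run of $M_G$ or $M_H$ that has been fed exactly the (in general non-contiguous) subword of $w$ absorbed into that slot. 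At any moment exactly one run is \emph{active}, corresponding to the top slot; the runs of the lower slots are \emph{suspended}, and the entire configuration of each suspended run (control state, work-tape contents, pushdown contents) is stored on the automaton's pushdown, the blocks delimited by a fresh separator symbol and ordered so that the bottom slot lies deepest and the active run's pushdown sits on top, while the active run's work tape and state are kept on the automaton's work tape and in its finite control. A letter whose factor matches the top slot is fed to the active run. On a letter of the other factor, the automaton nondeterministically either (a) suspends the active run, copying its work tape and state onto the pushdown behind a separator, and opens a new top slot, feeding the letter to a fresh instance of the appropriate $M_G$ or $M_H$; or (b) guesses that the top slot equals $1$, feeds end-of-input to the active run and checks that it accepts (thereby verifying the guess), pops its configuration together with the separator, reactivates the run of the slot below (starting a fresh run if the stack has become empty), and feeds it the letter. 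When $w$ is exhausted the automaton feeds end-of-input to the active run and, as long as it accepts, pops it and reactivates the run below; it accepts iff the pushdown becomes empty.

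Correctness rests on the invariant that the concatenation of the slot elements, in stack order, always equals the group element of $G * H$ represented by the prefix of $w$ read so far, so an accepting run certifies $w = 1$; conversely the reduction above, performed with suitable guesses, is realised by an accepting run, the point being that deleting a slot that genuinely equals $1$ is always a legal reduction step. The main obstacle --- and the only place where the one-way hypothesis on $M_G$ and $M_H$ is essential --- is the suspend/resume mechanism: because each simulated run is one-way, the increasing subsequence of input positions assigned to its slot is delivered to it strictly from left to right and it never has to move back over the tape, whereas a two-way sub-automaton could not cope with its positions being interleaved with those of other slots. The resource bounds are then routine: the active run's work tape plus $O(\log|w|)$ of bookkeeping stays logarithmic; the pushdown holds at most $|w|$ run configurations, each of polynomial length since $M_G$ and $M_H$ are polynomially time bounded, so the pushdown stays polynomial; and the total running time is polynomial since the subwords fed to the various slots partition $w$, so the runs' running times sum to a polynomial in $|w|$, with an additional $O(|w|\log|w|)$ for the suspend/resume copying. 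Hence $G * H$ belongs to OW-AuxPDA as well.
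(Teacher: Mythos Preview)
Your proof is correct and follows essentially the same strategy as the paper's: for $G \times \Z$ you run $M_G$ while maintaining a logarithmic-size binary counter for the $\Z$-exponent, and for $G * H$ you keep a stack of suspended sub-runs (state plus work tape pushed on top of that run's own pushdown, separated by markers), with the active run living on top. Your options (a) and (b) at a factor change correspond exactly to the paper's two cases ``current run is not in its $1$-state, so push it'' versus ``current run is in its $1$-state with empty sub-stack, so pop and resume the run below''; you are simply more explicit that this branching is nondeterministic and you spell out the invariant and the resource bounds more carefully than the paper does.
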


\begin{proof}
The proof is essentially the same as in \cite[Lemma 4.8]{LohreyZ18}, but is presented for completeness.
Let us first consider the group $G \times \mathbb{Z}$. Let $\mathcal{P}(G)$ be a one-way AuxPDA 
for the word problem of $G$.
The one-way AuxPDA $\mathcal{P}(G  \times \mathbb{Z})$ for the word problem of $G$
simulates $\mathcal{P}(G)$
on the generators of $G$. Moreover, it stores the current value of the $\mathbb{Z}$-component in binary notation
on the work tape. If the input word has length $n$, then $O(\log n)$ bits are sufficient for this. At the end,
$\mathcal{P}(G \times \mathbb{Z})$ accepts if and only if  $\mathcal{P}(G)$ accepts
and the $\mathbb{Z}$-component on the work tape is zero.

Next, we consider the group $G*H$.
We have one-way AuxPDAs $\mathcal{P}(G)$ and $\mathcal{P}(H)$ for the word problems
of $G$ and $H$, respectively. We can assume that $\mathcal{P}(G)$ (resp., $\mathcal{P}(H)$) accepts an input word
$w$ if after reading $w$ the stack is empty and $\mathcal{P}(G)$ (resp., $\mathcal{P}(H)$) is in the unique final state $q_G$ (resp., $q_H$).
This can be achieved by doing $\varepsilon$-transitions at the end of the computation. 
In the following, we call $q_G$ (resp., $q_H$) the $1$-state of $\mathcal{P}(G)$ (resp., $\mathcal{P}(H)$).

Let $\Sigma$ (resp., $\Gamma$) be the input alphabet of 
$\mathcal{P}(G)$ (resp., $\mathcal{P}(H)$), which is a symmetric generating set for $G$ (resp., $H$).
We assume that $\Sigma \cap \Gamma = \emptyset$.
Consider now an input word $w \in (\Sigma \cup \Gamma)^*$.
Let us assume that $w = u_1 v_1 u_2 v_2 \cdots u_k v_k$ with $u_i \in \Sigma^+$ and $v_i \in \Gamma^+$ (other cases
can be treated analogously). The AuxPDA $\mathcal{P}(G*H)$ starts with empty stack and simulates the AuxPDA 
$\mathcal{P}(G)$ on the prefix $u_1$. If it turns out that $u_1 = 1$ in $G$ (which means that $\mathcal{P}(G)$ is 
in its $1$-state and the stack is empty) then the AuxPDA $\mathcal{P}(G*H)$
continues with simulating $\mathcal{P}(H)$ on $v_1$. On the other hand, 
if $u_1 \neq 1$ in $G$, then $\mathcal{P}(G*H)$
pushes the state together with the work tape content of $\mathcal{P}(G)$ reached after reading $u_1$ on the stack (on top of 
the final stack content of $\mathcal{P}(G)$). This allows  $\mathcal{P}(G*H)$ to resume the computation of $\mathcal{P}(G)$ later. Then $\mathcal{P}(G*H)$
continues with simulating $\mathcal{P}(H)$ on $v_1$. 

The computation of $\mathcal{P}(G*H)$ will continue in this way. More precisely, if after reading $u_i$ (resp. $v_i$ with $i < k$) the 
AuxPDA $\mathcal{P}(G)$ (resp. $\mathcal{P}(H)$) is in its $1$-state then either
\begin{enumerate}[(i)]
\item the stack is empty or 
\item the top part of the stack is of the form $s q t$ ($t$ is the top), where $s$ is a stack content of $\mathcal{P}(H)$
(resp. $\mathcal{P}(G)$), $q$ is a state of $\mathcal{P}(H)$
(resp. $\mathcal{P}(G)$) and $t$ is a work tape content of $\mathcal{P}(H)$
(resp. $\mathcal{P}(G)$). 
\end{enumerate}
In case (i), $\mathcal{P}(G*H)$ continues with the simulation of $\mathcal{P}(H)$
(resp. $\mathcal{P}(G)$) on the word $v_i$ (resp. $u_{i+1}$) in the initial configuration.
In case (ii), $\mathcal{P}(G*H)$ continues with the simulation of $\mathcal{P}(H)$
(resp. $\mathcal{P}(G)$) on the word $v_i$ (resp. $u_{i+1}$), where the simulation is started with 
stack content $s$, state $q$, and work tape content $t$. On the other hand, if after reading $u_i$ (resp. $v_i$ with $i < k$) the 
AuxPDA $\mathcal{P}(G)$ (resp. $\mathcal{P}(H)$) is not in its $1$-state then $\mathcal{P}(G*H)$ pushes on the stack the state
and work tape content of $\mathcal{P}(G)$ reached after its simulation on $u_i$. 
This concludes the description of the AuxPDA $\mathcal{P}(G*H)$. It is a one-way AuxPDA that accepts the word 
problem of $G*H$.
\end{proof}

 \section{Knapsack problems}

Let $G$ be a finitely generated group with the finite symmetric generating set $\Sigma$.
Moreover, let $X$ be a set of formal variables that take values
from $\N$. For a subset $U\subseteq X$, we use $\N^U$ to denote the set of 
maps  $\nu \colon U \to \N$, which we call \emph{valuations}.
An \emph{exponent expression} over $G$ is a formal expression of the form $E = u_1^{x_1} v_1 u_2^{x_2} v_2  \cdots u_k^{x_k} v_k$
with $k \geq 1$ and words $u_i, v_i \in \Sigma^*$. Here, the variables do not have to be pairwise distinct. If every variable in an exponent expression occurs at most once, it is called a \emph{knapsack expression}. 
Let $X_E = \{ x_1, \ldots, x_k \}$ be the set of variables that occur in $E$.
 For a valuation $\nu\in\N^U$ such that $X_E \subseteq U$ (in which case we also say
 that $\nu$ is a valuation for $E$), we define 
 $\nu(E) = u_1^{\nu(x_1)} v_1 u_2^{\nu(x_2)} v_2  \cdots u_k^{\nu(x_k)} v_k \in \Sigma^*$.
We say that $\nu$ is a \emph{solution} of the equation $E=1$ if $\nu(E)$ evaluates to the identity element $1$ of $G$.
With $\Sol(E)$ we denote the set of all solutions $\nu \in \N^{X_E}$ of $E$. We can view $\Sol(E)$ as a subset
of $\N^k$.  The \emph{length} of $E$ is defined as $|E| = \sum_{i=1}^k |u_i|+|v_i|$, whereas $k$ is its \emph{depth}.
We define {\em solvability of exponent equations over $G$} as the following decision problem:
\begin{description}
\item[Input] A finite list of exponent expressions $E_1,\ldots,E_n$ over $G$.
\item[Question] Is $\bigcap_{i=1}^n \Sol(E_i)$ non-empty?
\end{description}
The {\em knapsack problem for $G$} is the following 
decision problem:
\begin{description}
\item[Input] A single knapsack expression $E$ over $G$.
\item[Question] Is $\Sol(E)$ non-empty?
\end{description}
It is easy to observe that the concrete choice of the generating set $\Sigma$ has no influence
on the decidability and complexity status of these problems.
Later, we will also allow exponent expressions of the form 
$v_0 u_1^{x_1} v_1 u_2^{x_2} v_2  \cdots u_k^{x_k} v_k$, which do not start with a power 
$u_1^{x_1}$. Such an exponent expression can be replaced by
$u_1^{x_1} v_1 u_2^{x_2} v_2  \cdots u_k^{x_k} v_k v_0$ without changing the set of solutions. 

The group $G$ is called {\em knapsack-semilinear} if for every knapsack expression $E$ over $G$,
the set $\Sol(E)$ is a semilinear set of vectors and a semilinear representation can be effectively computed from $E$.
Since the emptiness of the intersection of finitely many semilinear sets is decidable,  solvability of exponent equations is decidable for every
knapsack-semilinear group.
As mentioned in the introduction, the class of knapsack-semilinear groups is very rich.
An example of a group $G$, where knapsack is decidable but solvability of exponent equations
is undecidable is the Heisenberg group $\DHB$ (which consists of all upper triangular $(3 \times 3)$-matrices over the integers, where all diagonal entries
are $1$), see \cite{KoenigLohreyZetzsche2015a}. In particular, $\DHB$ is not knapsack-semilinear.

The group $G$ is called {\em polynomially knapsack-bounded} if there is a fixed polynomial $p(n)$ such that
for a given a knapsack expression $E$ over $G$, one has $\Sol(E) \neq \emptyset$ if and only if there exists
$\nu \in \Sol(E)$ with $\nu(x) \leq p(|E|)$ for all variables $x$ in $E$.

The group $G$ is called {\em knapsack-tame} if there is a fixed polynomial $p(n)$ such that
for a given a knapsack expression $E$ over $G$ one can compute a semilinear
representation for $\Sol(E)$ of magnitude at most $p(|E|)$. Thus, every knapsack-tame group is 
knapsack-semilinear as well as polynomially knapsack-bounded. The following result was shown in \cite{LohreyZ18}:

\begin{proposition}[\mbox{\cite[Proposition~4.11 and 4.17]{LohreyZ18}}] \label{prop-knapsack-tame}
If $G$ and $H$ are knapsack-tame groups then also the free product $G*H$  and the direct product $G \times \mathbb{Z}$
are knapsack-tame.
\end{proposition}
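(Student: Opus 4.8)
The plan is to handle the two constructions separately, in each case reducing the computation of $\Sol(E)$, for a knapsack expression $E$ over the composite group, to a bounded combinatorial case distinction, a few invocations of knapsack-tameness of the factors, and intersections with linear Diophantine constraints whose coefficients are polynomially bounded in $|E|$. The recurring tool I would isolate first is an elementary fact about a single linear Diophantine equation: if $\sum_{\ell} d_\ell \lambda_\ell = c$ with $|d_\ell|, |c| \le D$, then its solution set over $\N$ is semilinear and admits a representation in which every occurring vector has $\ell_1$-norm at most a fixed polynomial in $D$ --- crucially, with a bound that does not depend on the number of variables (this is the classical bound on minimal solutions of linear Diophantine equations). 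From this I obtain the key closure property: if $A \subseteq \N^k$ has a semilinear representation of magnitude $M$ and $H = \{x : \langle a, x\rangle = b\}$, then $A \cap H$ has a semilinear representation of magnitude $\mathrm{poly}(M, \|a\|_\infty, |b|, k)$. Indeed, for each linear piece $v_0 + \sum_\ell \N v_\ell$ of $A$, substituting into $\langle a, x\rangle = b$ yields the single equation $\sum_\ell \langle a, v_\ell\rangle\, \lambda_\ell = b - \langle a, v_0\rangle$ whose coefficients have size at most $kM\|a\|_\infty$; its $\N$-solution set is semilinear with all parameter vectors of $\ell_1$-norm polynomially bounded, and pushing these forward through the affine map $\lambda \mapsto v_0 + \sum_\ell \lambda_\ell v_\ell$ inflates norms by a factor of at most $M$. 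The same argument applies to finitely many linear constraints added one at a time.

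For $G \times \Z$ the argument is then routine. Choose a symmetric generating set of $G \times \Z$ whose elements project to a generator of $G$ (or to $1$) in the first coordinate and to an integer of absolute value at most $c$ in the second. A knapsack expression $E = u_1^{x_1} v_1 \cdots u_k^{x_k} v_k$ projects to a knapsack expression $\bar E = \bar u_1^{x_1} \bar v_1 \cdots \bar u_k^{x_k}\bar v_k$ over $G$ together with integers $a_i$ (from $u_i$) and $b_i$ (from $v_i$) satisfying $|a_i| \le c|u_i|$ and $\sum_i |b_i| \le c|E|$, and one has $\nu(E) = 1$ in $G \times \Z$ if and only if $\nu(\bar E) = 1$ in $G$ and $\sum_i a_i \nu(x_i) = -\sum_i b_i$; thus $\Sol(E) = \Sol(\bar E) \cap H$. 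Applying knapsack-tameness of $G$ to $\bar E$ and then the closure property above with $\|a\|_\infty, |b| \le c|E|$ gives a semilinear representation of $\Sol(E)$ of magnitude polynomial in $|E|$.

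For $G * H$ the substance of the proof is a normal-form and cancellation analysis in the free product. Fixing disjoint symmetric generating sets of $G$ and $H$, I would first classify, using only the word problems of the factors, each base $u_i$ into one of finitely many types: trivial, of finite order, \emph{elliptic} (i.e. $u_i = w_i g_i w_i^{-1}$ with $g_i$ of infinite order inside a factor and $w_i$ of syllable length $O(|u_i|)$), or \emph{hyperbolic} (i.e. $u_i$ cyclically reduces in $G*H$ to a syllable-alternating word of length at least $2$); in each case one records, with $O(|u_i|)$ bits, the freely reduced syllable form of $u_i^n$ as an explicit function of $n$ (of shape $w_i g_i^n w_i^{-1}$, respectively a prefix times a power of a cyclic word times a suffix). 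Inserting these together with the $v_i$ into $E$, the freely reduced form of $\nu(E)$ arises by merging adjacent syllables from the same factor, and $\nu(E) = 1$ precisely when everything cancels. I would then enumerate the finitely many \emph{cancellation schemes} --- a $\mathrm{poly}(|E|)$-sized combinatorial description of which blocks cancel against which, how many periods of each hyperbolic power survive, and the resulting merge pattern. For a fixed scheme, ``$\nu(E) = 1$'' is equivalent to a bounded conjunction of (a) linear equations and inequalities over the $x_i$ with $O(|E|)$-sized coefficients (matching the lengths of cancelling runs of periods, requiring a given power to survive or to vanish), and (b) exponent equations over $G$ and over $H$ of total size $\mathrm{poly}(|E|)$ stating that each surviving ``core'' --- a product, inside a single factor, of bounded-length elements and powers $g_i^{x_i}$ coming from elliptic bases --- is trivial in that factor. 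Since no variable repeats in $E$, no variable repeats in a core, so each core is a genuine knapsack expression over $G$ or over $H$. Applying knapsack-tameness of the factors to the equations in (b), combining with the constraints in (a) via the closure property, and taking the union over all schemes yields a semilinear representation of $\Sol(E)$ of magnitude $\mathrm{poly}(|E|)$.

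The hard part is the free-product case, and within it the cancellation bookkeeping: I expect the real work to be (1) showing that only a bounded amount of boundary behaviour of each hyperbolic power $u_i^{x_i}$ --- beyond the linearly counted number of surviving period syllables --- can interact with its neighbours, so that the residual conditions genuinely form a bounded conjunction of the above form rather than something growing with the $x_i$; and (2) bounding the number of schemes and the size of their descriptions by $\mathrm{poly}(|E|)$. Once this structural analysis is in place, all magnitude bookkeeping reduces to the elementary Diophantine estimate isolated at the start, and the polynomial magnitude bound follows.
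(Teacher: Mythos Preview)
The paper does not supply its own proof of this proposition; it merely cites \cite[Propositions~4.11 and 4.17]{LohreyZ18}. So there is no in-paper argument to compare against. That said, your outline is essentially the strategy carried out in the cited source: for $G \times \Z$ one projects to $G$ and intersects $\Sol(\bar E)$ with a single linear Diophantine constraint coming from the $\Z$-coordinate; for $G*H$ one performs a syllable/normal-form analysis, classifies each $u_i$ according to its behaviour under powering, enumerates the combinatorially possible cancellation patterns, and for each pattern reduces to a bounded conjunction of linear constraints together with knapsack expressions over the individual factors. Your identification of the ``hard part'' (the boundary bookkeeping for hyperbolic powers and the finiteness of the scheme enumeration) is accurate.

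One small correction in emphasis: you write that you would bound the \emph{number} of cancellation schemes by $\mathrm{poly}(|E|)$. This is neither necessary nor, in general, true --- the enumeration can be exponential. What matters for knapsack-tameness is only the \emph{magnitude} of the resulting semilinear representation, and magnitude is unaffected by taking unions (it is the maximum over the pieces, not their count). So you need only ensure that each individual scheme yields constraints of polynomial size and that the factor-knapsack expressions arising from it have length $\mathrm{poly}(|E|)$; the possibly exponential union then still has polynomial magnitude. With that adjustment, your plan is sound and aligns with the argument in \cite{LohreyZ18}.
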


\section{Membership for acyclic automata}

An {\em acyclic NFA} is a nondeterministic
finite automaton $\mathcal{A} = (Q,\Sigma, \Delta, q_0, F)$ ($Q$ is a finite set of states, $\Sigma$ is 
the input alphabet, $\Delta \subseteq Q \times \Sigma^* \times Q$ is the set of transition triples, $q_0 \in Q$ 
is the initial state, and $F \subseteq Q$ is the set of final states) such that the relation $\{ (p,q) \in Q \times Q \mid \exists w \in \Sigma^* : (p,w,q) \in \Delta\}$
is acyclic. Note that we allow transitions labelled with words, which will be convenient in the following.

Let $G$ be a finitely generated group with the finite symmetric generating set $\Sigma$.
The {\em membership problem for acyclic NFAs over $G$} is the following computational problem:
\begin{description}
\item[Input]  an  acyclic NFA $\mathcal{A}$ with input alphabet $\Sigma$.
\item[Question]  does $\mathcal{A}$ accept a word $w \in \Sigma^*$ such that $w=1$ in $G$?
\end{description}
Again, the concrete choice of the generating set $\Sigma$ has no influence
on the decidability and complexity status of this problem.

\begin{theorem} \label{thm-acyclic-NFA}
If the group $G$ belongs to the class OW-AuxPDA, then 
membership for acyclic NFAs over $G$ belongs to $\LogCFL$.
\end{theorem}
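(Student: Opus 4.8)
The plan is to combine the given OW-AuxPDA for the word problem of $G$ with a traversal of the acyclic NFA $\mathcal{A}$, using the pushdown store of the AuxPDA precisely for the word-problem simulation and the logspace work tape to keep track of where we are in $\mathcal{A}$. The key point is that an acyclic NFA has a topological order on its states, so a path from $q_0$ to a final state visits a strictly increasing sequence of states; the number of states (hence the length of any simple path, counted in transitions) is polynomial in the size of the input, and the total length of the word read along such a path is at most the sum of the lengths of the transition labels, again polynomial in the input size. This is exactly the regime an AuxPDA with polynomial time and logarithmic space can handle.

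Concretely, I would build a one-way AuxPDA $\mathcal{P}$ that on input (an encoding of) $\mathcal{A}$ does the following. It writes the current state of $\mathcal{A}$ on its work tape, initialized to $q_0$; this needs only $O(\log n)$ bits since a state can be named by an index into the input. It also runs the OW-AuxPDA $\mathcal{P}(G)$ for the word problem of $G$ "in the background'', using the pushdown store of $\mathcal{P}$ and $O(\log n)$ further cells of the work tape. In one macro-step, $\mathcal{P}$ guesses an outgoing transition $(p,w,q)\in\Delta$ from its current state $p$ (it can locate and read off $w$ and $q$ by scanning the input with its two-way input head), feeds the letters of $w$ one by one into the simulated $\mathcal{P}(G)$, and updates the work-tape state to $q$. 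Since $\mathcal{A}$ is acyclic, $q$ comes strictly later than $p$ in the fixed topological order, so after at most $|Q|$ macro-steps $\mathcal{P}$ has no outgoing transition and halts the traversal; at that point $\mathcal{P}$ accepts iff the current state is in $F$ and the simulated $\mathcal{P}(G)$ accepts, i.e., the word read along the chosen path equals $1$ in $G$. Because $\mathcal{P}(G)$ is a one-way AuxPDA, the letters are fed left-to-right and never need to be revisited, so the composition is well-defined; note $\mathcal{P}$ itself need not be one-way, since it must move its input head back and forth to locate transition labels, but $\LogCFL$ is closed under this (one-wayness is only needed for the closure results in Section~\ref{sec-logcfl}, not here).

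It remains to check the resource bounds. The work tape holds the current $\mathcal{A}$-state, a few pointers for locating and scanning transition labels, and the $O(\log n)$ work tape of $\mathcal{P}(G)$, for a total of $O(\log n)$ cells. For the time bound: the number of macro-steps is at most $|Q|\le n$; within a macro-step, locating a transition and scanning its label $w$ takes time polynomial in $n$; and the simulated run of $\mathcal{P}(G)$ on the concatenated path word $w_1w_2\cdots$ of total length $\le\sum_{(p,w,q)\in\Delta}|w|\le n$ takes time polynomial in that length (since $\mathcal{P}(G)$ is polynomially time bounded), hence polynomial in $n$. So $\mathcal{P}$ is logspace bounded and polynomially time bounded, and by the AuxPDA characterization of $\LogCFL$ (Sudborough's theorem, cited in Section~\ref{sec-logcfl}) its accepted language is in $\LogCFL$. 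The main obstacle to be careful about is the interleaving of the two "heads'' into the pushdown: the pushdown must be used exclusively by $\mathcal{P}(G)$ and must not be disturbed when $\mathcal{P}$ moves its input head around to read a transition label; this is handled by keeping all of $\mathcal{P}$'s bookkeeping (the $\mathcal{A}$-state, the scan pointers) on the work tape, so that the stack content is at every macro-step exactly a legitimate stack content of $\mathcal{P}(G)$ after reading the path word so far. A minor point is the acyclicity of transition triples rather than of single letters, which only means a macro-step reads a whole word $w$ at once; since we feed $w$ into $\mathcal{P}(G)$ letter by letter this causes no difficulty.
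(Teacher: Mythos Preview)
Your proposal is correct and follows essentially the same approach as the paper's proof: guess a path through the acyclic NFA while simulating the one-way AuxPDA for the word problem of $G$ on the concatenated labels, using acyclicity to bound the path length and hence the running time. Your write-up is considerably more explicit about the bookkeeping (storing the current $\mathcal{A}$-state on the work tape, reserving the pushdown exclusively for $\mathcal{P}(G)$, and the resource accounting), and you correctly note that the overall machine is a two-way AuxPDA even though $\mathcal{P}(G)$ is one-way---just fix the small slip where you first call the constructed machine ``a one-way AuxPDA'' before later (correctly) observing it need not be one-way.
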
    

\begin{proof}
Let $\mathcal{P}$ be a one-way AuxPDA for the word problem of $G$.
An AuxPDA for the membership problem for acyclic NFAs over $G$
guesses a path in the acyclic input NFA $\mathcal{A}$ and thereby simulates the AuxPDA $\mathcal{P}$
on the word spelled by the guessed path.
If the final state of the input NFA $\mathcal{A}$ is reached and the AuxPDA $\mathcal{P}$ accepts at the same time,
then the overall AuxPDA accepts. It is important that the AuxPDA $\mathcal{P}$ works one-way since 
the guessed path in $\mathcal{A}$ cannot be stored in logspace. This implies that the AuxPDA cannot re-access 
the input symbols that have already been processed.  Also note that the AuxPDA 
is logspace bounded and polynomially time bounded since $\mathcal{A}$
is acyclic.
\end{proof}

\begin{theorem} \label{thm-membership-to-knapsack}
Let $G$ be a polynomially knapsack-bounded group. Then there is a logspace 
reduction from the knapsack problem for $G$ to 
membership for acyclic NFAs over $G$.
\end{theorem}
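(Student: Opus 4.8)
The plan is to construct, from a knapsack expression $E = u_1^{x_1} v_1 u_2^{x_2} v_2 \cdots u_k^{x_k} v_k$, an acyclic NFA $\mathcal{A}_E$ over the generating set $\Sigma$ of $G$ such that the words accepted by $\mathcal{A}_E$ are exactly the words $\nu(E)$ for valuations $\nu$ with $\nu(x_i) \le p(|E|)$ for all $i$, where $p$ is the polynomial witnessing that $G$ is polynomially knapsack-bounded. Since $G$ is polynomially knapsack-bounded, $\Sol(E) \neq \emptyset$ if and only if some such bounded valuation is a solution, i.e.\ if and only if $\mathcal{A}_E$ accepts a word equal to $1$ in $G$. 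This is precisely an instance of the membership problem for acyclic NFAs over $G$, so it remains only to check that $\mathcal{A}_E$ can be produced in logspace.

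First I would describe $\mathcal{A}_E$ concretely. For each power $u_i^{x_i}$ I build a ``counting gadget'': a chain of $p(|E|)+1$ states $q_{i,0}, q_{i,1}, \ldots, q_{i,p(|E|)}$ with an $\varepsilon$-labelled edge from $q_{i,j-1}$ to $q_{i,j}$ for each $j$ (allowing the exponent to be any value $0,1,\ldots$ up to the bound) and a $u_i$-labelled self-"loop" replaced by the acyclic device of having edge labelled $u_i$ from $q_{i,j-1}$ to $q_{i,j}$ — so walking $j$ of these $u_i$-edges spells $u_i^j$; more precisely I give each state $q_{i,j-1}$ two outgoing edges to $q_{i,j}$, one labelled $u_i$ and one labelled $\varepsilon$, and declare that after leaving the gadget through $q_{i,p(|E|)}$ via a $v_i$-labelled edge into the start of gadget $i+1$. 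A path through gadget $i$ thus spells $u_i^{m_i}$ for some $0 \le m_i \le p(|E|)$, followed by $v_i$. Chaining the $k$ gadgets in order and making the exit state of the last one final yields an acyclic NFA whose accepted words are exactly $\{u_1^{m_1} v_1 \cdots u_k^{m_k} v_k \mid 0 \le m_i \le p(|E|)\}$. This automaton has $O(k \cdot p(|E|))$ states and, since transitions may be labelled by words, its transition set (using labels $u_i$, $v_i$, $\varepsilon$) has size polynomial in $|E|$; each component can be written down by a logspace transducer scanning $E$ and counting up to $p(|E|)$ in binary on its work tape.

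The correctness argument is then immediate: $\mathcal{A}_E$ accepts a word $w$ with $w = 1$ in $G$ iff there is $(m_1,\ldots,m_k)$ with $m_i \le p(|E|)$ and $u_1^{m_1} v_1 \cdots u_k^{m_k} v_k = 1$ in $G$, i.e.\ iff $\Sol(E)$ contains a valuation bounded by $p(|E|)$; and by the definition of polynomially knapsack-bounded (taking $|E|$ as the size measure, which is within a constant factor of the word-length measure used there), this holds iff $\Sol(E) \neq \emptyset$. Hence $E \mapsto \mathcal{A}_E$ is the desired reduction, provided it is logspace computable.

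The only point requiring care — and the main obstacle, though a mild one — is the logspace bound on producing $\mathcal{A}_E$. The bound $p(|E|)$ is a fixed polynomial, so it has polynomially many, hence $O(\log|E|)$ many, bits; a logspace transducer can hold a counter up to $p(|E|)$ and, for each power $u_i^{x_i}$ of $E$, emit the $p(|E|)+1$ states and their incident edges by iterating the counter, copying the subword $u_i$ from the input when labelling the $u_i$-edges and $v_i$ when labelling the exit edge. All of this uses only a constant number of $O(\log|E|)$-bit counters plus read-only access to $E$, so the transducer runs in logspace. One should also note that if some $u_i = \varepsilon$ the gadget degenerates harmlessly, and that the resulting relation on states is genuinely acyclic because every edge strictly increases the lexicographic index $(i,j)$ of the target state. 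This completes the reduction.
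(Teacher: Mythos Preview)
Your proposal is correct and takes essentially the same approach as the paper: both build an acyclic NFA whose states are pairs $(i,j)$ with $1 \le i \le k+1$ and $0 \le j \le p(|E|)$, with parallel $u_i$- and $\varepsilon$-edges from $(i,j)$ to $(i,j+1)$ and a $v_i$-edge from $(i,p(|E|))$ to $(i+1,0)$. Your treatment is in fact slightly more careful than the paper's, spelling out the logspace computability and the acyclicity check explicitly.
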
    

\begin{proof}
Let $G$ be a polynomially knapsack-bounded group with the symmetric generating set $\Sigma$. We present a logspace reduction from knapsack for $G$ 
to the membership problem for acyclic NFAs. 
Consider a knapsack expression $E = u_1^{x_1} v_1 u_2^{x_2} v_2 \cdots u_{k}^{x_{k}} v_{k}$
over $G$. Since $G$ is polynomially knapsack-bounded, there exists a polynomial $p(x)$ such that $\Sol(E) \neq \emptyset$ if and only 
if there exists a solution $\nu \in \Sol(E)$ 
such that $\nu(x_i) \leq p(|E|)$ for all $1 \leq i \leq k$. We now construct an NFA $\mathcal{A}$ 
as follows: It has the state set $Q=[1,k+1]\times
[0,p(n)]$ and the following transitions. For each $i\in[1,k]$ and $j\in[0,p(n)-1]$, there are
two transitions from $(i,j)$ to $(i,j+1)$; one labeled by $u_i$ and one labeled
by $\varepsilon$. Furthermore, there is a transition from $(i,p(n))$ to
$(i+1,0)$ labeled $v_i$ for each $i\in[1,k]$. The initial state is $(1,0)$
and the unique final state is $(k+1,0)$.  

It is clear that $\mathcal{A}$ accepts a word that represents $1$ if and only if 
$\Sol(E) \neq \emptyset$.  Finally, the NFA can be clearly 
computed in logarithmic space from $E$. 
\end{proof}

\section{Complexity of knapsack in hyperbolic groups}

In this section we consider the complexity of the knapsack problem for a hyperbolic group.
In \cite{MyNiUs14} it was shown that for every hyperbolic group, knapsack can be solved
in polynomial time. Here, we improve the complexity to $\LogCFL$. We need one more result
from \cite{MyNiUs14}:

\begin{theorem}[c.f.~\mbox{\cite{MyNiUs14}}] \label{thm-ushakov}
Every hyperbolic group is polynomially knapsack-bounded.
\end{theorem}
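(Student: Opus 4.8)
The plan is to prove the stronger statement: if $E$ is a knapsack expression over a $\delta$-hyperbolic group $G$ with $\Sol(E)\neq\emptyset$, then $E$ has a solution $\nu$ with $\nu(x)\le p(|E|)$ for every variable $x$, where $p$ is a fixed polynomial depending only on $G$. First I would eliminate torsion. It is standard that in a hyperbolic group every torsion element is conjugate into a fixed finite ball, so the order of every torsion element is bounded by a constant $C=C(G)$. Writing $E=u_1^{x_1}v_1\cdots u_k^{x_k}v_k$, I may assume each $u_i$ is geodesic and nontrivial (trivial ones make the variable free). Given any solution, replacing $\nu(x_i)$ by its residue modulo the order of the group element $[u_i]$ whenever $[u_i]$ is torsion again yields a solution, and substituting these fixed values (each $<C$) into $E$ produces a knapsack expression $E'$ with $|E'|=O(|E|)$ in which every power $u_i^{x_i}$ has $u_i$ of infinite order, together with a solution of $E'$. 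Hence it suffices to bound the exponents of infinite-order powers, and from now on all $u_i$ have infinite order.

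Fix a solution $\nu$ with $S(\nu)=\sum_i\nu(x_i)$ minimal and put $n_i=\nu(x_i)$. The word $\nu(E)$ labels a closed path $\gamma$ in $\Gamma$ based at $1$, which I view as a closed $2k$-gon: its $k$ ``power sides'' $Q_i=P[u_i^{n_i}]$ (suitably translated) are, by Lemma~\ref{lemma-cyclic-words-quasi-geo}, uniformly $(\lambda,\epsilon)$-quasigeodesic with $\lambda=N|E|$ and $\epsilon=2N^2|E|^2+2N|E|$ where $N=|\mathcal{B}_{2\delta}(1)|$, while its $k$ ``link sides'' $P[v_i]$ have total length at most $|E|$. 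Replacing each $Q_i$ by a geodesic with the same endpoints (using stability of quasigeodesics, whose Hausdorff constant $M$ is polynomial in $\lambda,\epsilon$ and hence in $|E|$) and using that a geodesic $2k$-gon is $O(k\delta)$-slim, every point of $Q_i$ lies within a distance $D=O(k\delta)+O(M)$, still polynomial in $|E|$, of the union of the remaining sides. Now suppose some $n_i$ exceeds a polynomial threshold $B(|E|)$ to be fixed. Then $Q_i$ is long; since the link sides are short and, by quasigeodesicity, $Q_i$ cannot run $D$-close to a far-away part of itself, a pigeonhole over the at most $2k$ other sides yields a long sub-path of $Q_i$ that asynchronously fellow-travels within $D$ a long sub-path of a single other power side $Q_j$ with $j\neq i$.

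The core step is to convert this long fellow-travelling strip into a bounded-size ``commensurability'' relation between $g_i=[u_i]$ and $g_j=[u_j]$. I would slide Lemma~\ref{lemma-rectangle} along the strip: starting from the quasigeodesic rectangle whose short sides are the (length $\le D$) connecting geodesics and whose long sides are the relevant sub-paths of $Q_i$ and $Q_j$, each application shifts the cut by roughly one period $u_i$ and outputs a connecting element $c$ lying in the fixed finite ball $\mathcal{B}_{2\delta+2\kappa}(1)$. Since this ball is fixed, after at most $|\mathcal{B}_{2\delta+2\kappa}(1)|$ shifts two connecting elements coincide, and subtracting those two instances yields an identity $a\,g_i^{s}\,a^{-1}=b\,g_j^{\pm t}\,b^{-1}$ with $s,t\ge 1$ bounded polynomially in $|E|$ and $|a|,|b|$ bounded by a constant. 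This relation lets me reroute $\gamma$: I excise a suitable block of periods from $u_i^{n_i}$ together with the matching block of $u_j^{n_j}$ and reconnect through the bounded elements, obtaining a solution $\nu'$ of $E$ with $S(\nu')<S(\nu)$ once $B(|E|)$ is large enough — contradicting minimality. Hence $n_i\le B(|E|)$ for all $i$, and with the torsion bound $C$ this yields the desired polynomial $p$.

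The main obstacle is the combinatorial heart of this last step: carrying out the surgery so that it is genuinely exponent-decreasing and orientation-consistent — in particular resolving the sign in $g_j^{\pm t}$ (which side of the loop the two periodic strips traverse) and handling the degenerate case in which $g_i$ and $g_j$ are commensurable, where instead of a single surgery one argues directly that the solution set, restricted to these two coordinates, is ``flat'' with a small base point. A secondary but essential point is the uniform polynomial control of all hyperbolic constants — the quasigeodesic constants of Lemma~\ref{lemma-cyclic-words-quasi-geo}, the stability (Morse) constant, the $2k$-gon slimness, and the fellow-travelling bound $\kappa$ of Lemma~\ref{lemma-asynch-fellow-travel} — which is available because each depends at most polynomially on the preceding ones and ultimately on $|E|$. (Alternatively one may simply follow \cite{MyNiUs14}; the argument above is tailored to the lemmas of Section~\ref{sec-hyp}.)
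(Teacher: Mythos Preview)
Your approach --- a minimal-solution surgery argument in the spirit of \cite{MyNiUs14} --- is reasonable in outline, but there is a genuine gap that breaks the polynomial bound. You apply Lemma~\ref{lemma-cyclic-words-quasi-geo} directly, obtaining quasigeodesic constants $\lambda=N|E|$ and $\epsilon=2N^2|E|^2+2N|E|$ that depend on $|E|$. Consequently the constant $\kappa=K(\delta,\lambda,\epsilon,0)$ entering Lemma~\ref{lemma-rectangle} also depends (even if only polynomially) on $|E|$, and the ball $\mathcal{B}_{2\delta+2\kappa}(1)$ is \emph{not} fixed: in a non-elementary hyperbolic group its cardinality grows exponentially in its radius, hence exponentially in $|E|$. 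Your pigeonhole over this ball therefore yields periods $s,t$ bounded only exponentially in $|E|$, and the threshold $B(|E|)$ you extract from the surgery is exponential as well. The sentence ``since this ball is fixed'' is precisely where the argument fails; your closing remark about ``uniform polynomial control of all hyperbolic constants'' tracks the radii correctly but overlooks that it is the \emph{cardinality} of the ball, not its radius, that enters the pigeonhole.

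The missing ingredient is a preliminary reduction to quasigeodesic constants that are independent of $|E|$. In the paper this is Proposition~\ref{prop-geodesic-knapsack}, which uses the Epstein--Holt trick (Lemmas~\ref{lemma-Ep-Ho1} and~\ref{lemma-Ep-Ho2}): replace each $u_i$ by a cyclic conjugate and pass to a bounded power so that the resulting periodic word is $(\lambda,\epsilon)$-quasigeodesic for constants depending only on $G$. With that reduction in place $\kappa$ and the ball $\mathcal{B}_{2\delta+2\kappa}(1)$ really are fixed, and an argument of your type can be pushed through. Note also that the paper does not actually prove Theorem~\ref{thm-ushakov} by surgery on a minimal solution: it cites \cite{MyNiUs14} and, for a self-contained argument, deduces it from the stronger Theorem~\ref{thm-hyperbolic-semilinear}, whose proof recursively splits the $2k$-gon into pieces of depth at most two and bounds the magnitude of the resulting semilinear representations.
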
    
This result is also a direct corollary of Theorem~\ref{thm-hyperbolic-semilinear}
from the next section, stating that every hyperbolic group is knapsack-tame.

We can now easily derive the following two results:

\begin{corollary} \label{coro-acyclic-NFA-hyp}
Membership for acyclic NFAs over a hyperbolic group belongs to $\LogCFL$.
\end{corollary}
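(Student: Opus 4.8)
The plan is to combine the two theorems that immediately precede this corollary. Recall that Corollary~\ref{coro-acyclic-NFA-hyp} asserts that membership for acyclic NFAs over a hyperbolic group $G$ lies in $\LogCFL$. By Theorem~\ref{thm-hyp-OW-AuxPDA}, every hyperbolic group belongs to the class OW-AuxPDA, i.e.\ its word problem is recognized by a one-way (logspace, polynomial-time) AuxPDA. By Theorem~\ref{thm-acyclic-NFA}, if $G$ belongs to OW-AuxPDA then membership for acyclic NFAs over $G$ belongs to $\LogCFL$. So the proof is essentially a one-line chaining of these two facts: apply Theorem~\ref{thm-hyp-OW-AuxPDA} to put $G$ into OW-AuxPDA, then feed this into Theorem~\ref{thm-acyclic-NFA} to conclude.

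First I would recall the statement of Theorem~\ref{thm-hyp-OW-AuxPDA} to establish that the hyperbolic group $G$ has a one-way AuxPDA recognizing its word problem; the underlying fact is that the word problem of a hyperbolic group is growing context-sensitive (from \cite{Lo05ijfcs}) and every such language is accepted by a one-way AuxPDA in logspace and polynomial time (from \cite{BuOt98}). Then I would invoke Theorem~\ref{thm-acyclic-NFA}, whose proof guesses a path through the acyclic input automaton $\mathcal{A}$ while simulating the one-way word-problem AuxPDA $\mathcal{P}$ on the spelled word, accepting exactly when $\mathcal{A}$ reaches a final state and $\mathcal{P}$ accepts simultaneously; acyclicity of $\mathcal{A}$ guarantees the polynomial time bound and logspace bound, and the one-way property of $\mathcal{P}$ is what makes it unnecessary to store the guessed path. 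Since $\LogCFL$ equals the class of languages accepted by (logspace, polynomial-time) AuxPDAs \cite{Su78}, membership for acyclic NFAs over $G$ is in $\LogCFL$.

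There is essentially no obstacle here: the corollary is a direct composition of two results already proved in the excerpt, and the only thing to check is that the hypotheses line up, namely that "$G$ is hyperbolic" feeds into Theorem~\ref{thm-hyp-OW-AuxPDA} to yield "$G \in {}$OW-AuxPDA", which is precisely the hypothesis of Theorem~\ref{thm-acyclic-NFA}. If anything deserves a sentence of care, it is only the observation (already noted before Theorem~\ref{thm-hyp-OW-AuxPDA}) that OW-AuxPDA membership, and hence the conclusion, is independent of the choice of finite symmetric generating set, so the statement is well-posed for the group $G$ rather than for a particular presentation.

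\begin{proof}
By Theorem~\ref{thm-hyp-OW-AuxPDA}, every hyperbolic group belongs to the class OW-AuxPDA. The claim now follows immediately from Theorem~\ref{thm-acyclic-NFA}.
\end{proof}
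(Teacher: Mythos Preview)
Your proof is correct and matches the paper's own proof exactly: it simply combines Theorem~\ref{thm-hyp-OW-AuxPDA} and Theorem~\ref{thm-acyclic-NFA}.
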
    

\begin{proof}
This follows from Theorem~\ref{thm-hyp-OW-AuxPDA} and~\ref{thm-acyclic-NFA}.
\end{proof}

\begin{corollary} \label{thm-knapsack-logcfl}
For every hyperbolic groups $G$, knapsack can be solved in $\LogCFL$. Moreover, if $G$ contains a copy of $F_2$ (the free group of rank $2$)
then knapsack for $G$ is $\LogCFL$-complete.
\end{corollary}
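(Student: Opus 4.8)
The plan is to establish the two halves of the statement separately, in each case by assembling results that are already available. For the $\LogCFL$ upper bound I would chain three facts: by Theorem~\ref{thm-ushakov} every hyperbolic group $G$ is polynomially knapsack-bounded, so Theorem~\ref{thm-membership-to-knapsack} provides a logspace reduction from the knapsack problem for $G$ to the membership problem for acyclic NFAs over $G$; and by Corollary~\ref{coro-acyclic-NFA-hyp} the latter problem lies in $\LogCFL$. Since $\LogCFL$ is, by definition, the class of problems logspace reducible to a fixed context-free language, it is closed under logspace reductions, and composing the two reductions shows that knapsack for $G$ is in $\LogCFL$. (The elementary case, where $G$ is virtually cyclic, is subsumed here; one could alternatively place it directly in $\NL \subseteq \LogCFL$.)

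For the $\LogCFL$ lower bound I would use that knapsack for the free group $F_2$ of rank two is $\LogCFL$-hard under logspace reductions, which is shown in \cite{LohreyZ18}; together with the upper bound this already yields $\LogCFL$-completeness for $F_2$. It then suffices to reduce knapsack over $F_2$ to knapsack over an arbitrary hyperbolic group $G$ containing a copy of $F_2$. Fix a subgroup $H \le G$ with $H \cong F_2$, pick free generators $a, b$ of $H$, and fix words $w_a, w_b \in \Sigma^*$ with $w_a = a$ and $w_b = b$ in $G$; these data depend only on $G$, not on the input. Given a knapsack expression $E = u_1^{x_1} v_1 \cdots u_k^{x_k} v_k$ over $F_2$ with every $u_i, v_i$ a word over $\{a, a^{-1}, b, b^{-1}\}$, apply to each $u_i$ and $v_i$ the homomorphic substitution that sends $a \mapsto w_a$, $b \mapsto w_b$ (and $a^{-1} \mapsto w_a^{-1}$, $b^{-1} \mapsto w_b^{-1}$). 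This produces a knapsack expression $E'$ over $G$, of length $O(|E|)$, computable in logspace, in which every variable still occurs at most once. Since this substitution realizes exactly the embedding $F_2 \cong H \hookrightarrow G$, a valuation $\nu$ lies in $\Sol(E)$ (computed in $F_2$) if and only if it lies in $\Sol(E')$ (computed in $G$); in particular $\Sol(E) \ne \emptyset$ if and only if $\Sol(E') \ne \emptyset$. Hence knapsack for $G$ is $\LogCFL$-hard, and combined with the upper bound it is $\LogCFL$-complete.

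There is essentially no obstacle here: the corollary is a direct consequence of the preceding theorems together with the external hardness result of \cite{LohreyZ18}. The only point needing (routine) care is the faithfulness of the last reduction — one must use that $\langle a, b \rangle$ is genuinely free of rank two in $G$, which is precisely the hypothesis that $G$ contains a copy of $F_2$, so that two distinct elements of $F_2$ are never accidentally identified in $G$.
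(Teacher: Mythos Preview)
Your proposal is correct and follows essentially the same route as the paper: the upper bound is obtained by chaining Theorem~\ref{thm-ushakov}, Theorem~\ref{thm-membership-to-knapsack}, and Corollary~\ref{coro-acyclic-NFA-hyp}, and the lower bound is imported from the $\LogCFL$-hardness of knapsack for $F_2$ established in \cite{LohreyZ18}. The only difference is that you spell out explicitly the (straightforward) logspace reduction from knapsack over $F_2$ to knapsack over $G$ via the embedding, whereas the paper leaves this step implicit.
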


\begin{proof}
The first statement follows from Theorems~\ref{thm-membership-to-knapsack} and~\ref{thm-ushakov} and
Corollary~\ref{coro-acyclic-NFA-hyp}.
The second statement follows from \cite[Proposition 4.26]{LohreyZ18}, where it was shown that 
knapsack for $F_2$ is $\LogCFL$-complete.
\end{proof}

\section{Hyperbolic groups are knapsack-semilinear} \label{sec-hyp-ksl}

In this section, we prove the following strengthening of Theorem~\ref{thm-ushakov}:

\begin{theorem} \label{thm-hyperbolic-semilinear}
Every hyperbolic group is knapsack-tame. 
\end{theorem}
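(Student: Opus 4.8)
The plan is to prove a stronger statement by induction on the depth $k$ of a knapsack expression $E = u_1^{x_1} v_1 \cdots u_k^{x_k} v_k$: there is a fixed polynomial $p$ (depending only on $G$, in fact only on $\delta$ and $|\Sigma|$) such that $\Sol(E)$ has a semilinear representation of magnitude at most $p(|E|)$. The base case $k=1$ amounts to analyzing the equation $u_1^{x_1} v_1 = 1$, i.e. deciding for which $n$ the word $u_1^n v_1$ represents $1$; if $u_1$ represents an element of finite order this is handled directly by bounding periods via $|\mathcal{B}_r(1)|$, and if $u_1$ has infinite order then by Lemma~\ref{lemma-cyclic-words-quasi-geo} the word $u_1^n$ is $(\lambda,\epsilon)$-quasigeodesic for fixed $\lambda,\epsilon$, so there is at most one solution and it is polynomially bounded. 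The inductive step is where the real work lies.

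For the inductive step I would first replace each geodesic (or shortlex) representative and work with the paths $P[u_i^{x_i}]$, which are $(\lambda,\epsilon)$-quasigeodesic with fixed constants by Lemma~\ref{lemma-cyclic-words-quasi-geo} (handling torsion pieces separately). The key geometric idea is a ``matching'' or ``cancellation'' analysis: in a solution, the concatenation $\nu(E)$ bounds a geodesic $n$-gon-like region, and by repeated application of the rectangle-splitting Lemma~\ref{lemma-rectangle} one sees that for each pair of adjacent (or opposite) power-blocks $u_i^{x_i}$ and $u_j^{x_j}$, once the exponents exceed a polynomial threshold, large subpaths of the two blocks must asynchronously $K$-fellow travel with a bounded connecting element $c \in \mathcal{B}_{2\delta+2\kappa}(1)$. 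Since there are only $|\mathcal{B}_{2\delta+2\kappa}(1)|$ many such $c$'s, and fellow-travelling of two quasigeodesic powers $u_i^a$ and $u_j^b$ forces a linear relation $u_i^{a'} c = c' u_j^{b'}$ between periods (essentially: the periodic structure of the two fellow-travelling sides must be commensurable), one extracts finitely many ``large'' linear directions. Concretely, I expect to show $\Sol(E)$ is a finite union, indexed by a bounded amount of combinatorial data (which blocks are ``small'', which are ``large'', which connecting elements $c$ occur, how the blocks are paired), of sets of the form $\{v_0 + \lambda_1 v_1 + \cdots\}$ where the base point $v_0$ is polynomially bounded (the small-exponent part, whose boundedness comes from polynomial knapsack-boundedness of the residual lower-depth instances via the induction hypothesis) and the period vectors $v_i$ are also polynomially bounded (coming from the bounded connecting elements and the fixed quasigeodesic constants).

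The main obstacle is the bookkeeping for the inductive step: a single variable may occur in several blocks, blocks may interact non-locally (not just with neighbours), and the ``ladder'' of fellow-travelling pieces produced by iterating Lemma~\ref{lemma-rectangle} must be organized so that the resulting system of linear constraints on $(x_1,\ldots,x_k)$ is visibly semilinear with polynomial magnitude rather than merely finite. I expect to control this by peeling off one power-block at a time: fix the value of $x_k$ to range either over a polynomially bounded ``small'' window or over an arithmetic progression determined by a fellow-travelling match with some earlier block; in the latter case substitute and reduce to a knapsack expression of smaller depth (after absorbing the matched portion into a bounded-length word $v$ via the connecting element $c$), and invoke the induction hypothesis. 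Summing the polynomial magnitudes over the $k \le |E|$ levels of recursion, and over the boundedly-many combinatorial choices at each level, still yields a polynomial bound in $|E|$, which gives knapsack-tameness and hence (by the remarks in the previous section) both polynomial knapsack-boundedness and knapsack-semilinearity.
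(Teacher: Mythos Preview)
Your overall architecture---induction on depth, reduction to quasigeodesic power-blocks, geometric splitting via fellow-travelling---matches the paper's. But two load-bearing steps are missing or wrong as stated.

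\medskip
\textbf{Fixed quasigeodesic constants.} You invoke Lemma~\ref{lemma-cyclic-words-quasi-geo} to say that $P[u_i^{x_i}]$ is $(\lambda,\epsilon)$-quasigeodesic ``with fixed constants''. But that lemma gives $\lambda = N|g_i|$ and $\epsilon = 2N^2|g_i|^2 + 2N|g_i|$, which grow with $|u_i|$. This is not innocuous: in the depth-two analysis the fellow-travelling constant $K$ (and hence the ball $\mathcal{B}_K(1)$ whose elements index the combinatorial data) depends on $\lambda,\epsilon$, and $|\mathcal{B}_K(1)|$ is exponential in $K$. So with input-dependent $(\lambda,\epsilon)$ you lose the polynomial magnitude bound. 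The paper handles this via an extra reduction (Proposition~\ref{prop-geodesic-knapsack}): short $u_i$ already have bounded constants, while long $u_i$ are replaced, using the Epstein--Holt lemmas (Lemmas~\ref{lemma-Ep-Ho1} and~\ref{lemma-Ep-Ho2}), by a conjugate power $\shlex(c^{-1}\tilde u_i^{m}c)$ whose powers are genuinely geodesic. Only after this preprocessing are $\lambda,\epsilon$ truly fixed.

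\medskip
\textbf{The depth-two base case.} Your base case is $k=1$, which is indeed easy, but the real base case is $k=2$: this is where the period vectors of the semilinear representation must actually be produced, and your sketch (``fellow-travelling forces a linear relation between periods'') does not do this. The paper's argument (Lemma~\ref{lemma-k=2}) is that the asynchronous $K$-fellow-travelling ladder between $u_1^{x_1}$ and $u_2^{x_2}$ can be encoded as the runs of an NFA over a two-letter alphabet with $O(|u_1|\cdot|u_2|)$ states, so that the solution set is the Parikh image of a regular language; then Theorem~\ref{thm-ko-lin} converts this to a semilinear representation of polynomial magnitude. Without something like this NFA/Parikh-image step, you have no mechanism for extracting polynomially-bounded period vectors.

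\medskip
Your inductive splitting (``peel off $x_k$'') is in the right spirit, but note that the paper's case analysis on the thin $2k$-gon (using \cite[Lemma~6.4]{MyNiUs14}) explicitly allows $P_2$ to come $h$-close to a \emph{non-adjacent} side $P_i$ or $Q_i$ with $i\ge 3$, which is how depth genuinely drops. Your description suggests only matching $u_k$ against an earlier block; make sure the case where the short connecting path lands on a $v_i$-side, or on a distant $u_i$-side, is covered.
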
   
Let us  remark that the total number of vectors in a semilinear representation
can be exponential, even for the simplest case $G = \mathbb{Z}$. Take 
the (additively written) knapsack expression $E = x_1 + x_2 + \cdots + x_n - n$. 
Then $\Sol(E)$ is finite and consists of $\binom{2n-1}{n} \ge 2^n$ vectors.

Let us fix a $\delta$-hyperbolic group $G$ for the rest of Section~\ref{sec-hyp-ksl} and let $\Sigma$ be a finite symmetric generating set for $G$.

\subsection{Knapsack expressions of depth two}

We first consider knapsack expressions of depth $2$ where all powers are quasigeodesic.
It is well known that the semilinear sets are exactly the Parikh images of the regular languages.
We need a quantitative version of this result that was independently discovered by Kopczynski and Lin:

\begin{theorem}[c.f.~\mbox{\cite[Theorem 4.1]{To10}}, see also \cite{KopczynskiT10}] \label{thm-ko-lin}
Let $k$ be a fixed constant. Given an NFA $\mathcal{A}$ over an alphabet of size $k$ with $n$ states,
one can compute in polynomial time a semilinear representation of the Parikh image of $L(\mathcal{A})$.
Moreover, all numbers appearing in the semilinear representation are polynomially bounded in $n$ (in other words:
one can compute the semilinear representation with unary encoded numbers).
\end{theorem}

\begin{lemma} \label{lemma-k=2}
Let $\lambda$ and $\epsilon$ be fixed constants.
For all geodesic words $u_1, v_1, u_2, v_2 \in \Sigma^*$ such that $u_1 \neq \varepsilon \neq u_2$ and  
$u_1^n$, $u_2^n$ are $(\lambda, \epsilon)$-quasigeodesic for all $n \geq 0$,
the set $\{ (x_1, x_2) \in \N \times \N \mid v_1 u_1^{x_1} = u_2^{x_2} v_2 \text{ in } G\}$ is semilinear.
Moreover, one can compute a semi-linear representation whose magnitude is bounded by $p(|u_1| + |v_1| + |u_2| + |v_2|)$ for a 
fixed polynomial $p(n)$.
\end{lemma}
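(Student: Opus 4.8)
The plan is to reduce the equation $v_1 u_1^{x_1} = u_2^{x_2} v_2$ to a statement about an acyclic-like finite automaton whose Parikh image captures exactly the solution set, and then invoke Theorem~\ref{thm-ko-lin} to get a polynomial-magnitude semilinear representation. The geometric heart of the argument is the ladder/fellow-traveller machinery from Lemma~\ref{lemma-asynch-fellow-travel} and, crucially, the rectangle-splitting Lemma~\ref{lemma-rectangle}. First I would normalise: since $u_1, u_2$ are geodesic and their powers are $(\lambda,\epsilon)$-quasigeodesic, and $v_1, v_2$ are geodesic, I fix $\kappa = K(\delta,\lambda,\epsilon,0)$ from Lemma~\ref{lemma-asynch-fellow-travel}. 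If $v_1 u_1^{x_1} = u_2^{x_2} v_2$ holds, then the two quasigeodesic paths $P_1 = P[v_1 u_1^{x_1}]$ and $P_2 = P[u_2^{x_2} v_2]$ share endpoints, so by Lemma~\ref{lemma-asynch-fellow-travel} they asynchronously $K$-fellow travel for a constant $K$ depending only on $\delta,\lambda,\epsilon$ and $\max(|v_1|,|v_2|)$; I would instead apply the geometric content through repeated use of Lemma~\ref{lemma-rectangle}.

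The key idea is a \emph{pumping/periodicity} argument on the long power $u_1^{x_1}$. Suppose $x_1$ is large. Write $u_1^{x_1} = x_1' y_1'$ with $|x_1'|$ and $|y_1'|$ both above the threshold $\lambda(|v_1| + 2\delta + \kappa) + \epsilon$ resp.\ $\lambda(|v_2| + 2\delta + \kappa)+\epsilon$ required by Lemma~\ref{lemma-rectangle}; applying that lemma to $v_1 u_1^{x_1} = u_2^{x_2} v_2$ gives a factorisation $u_2^{x_2} = x_2' y_2'$ and an element $c$ in the ball $\mathcal{B}_{2\delta+2\kappa}(1)$ with $v_1 x_1' = x_2' c$ and $c\, y_1' = y_2' v_2$ in $G$. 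Now I would choose the cut point inside $u_1^{x_1}$ to slide by one period $u_1$ at a time: for each way of cutting after $u_1^{j}$ (times a prefix of $u_1$) one obtains an element $c_j \in \mathcal{B}_{2\delta+2\kappa}(1)$ and a matching cut of $u_2^{x_2}$. Since there are only $N' = |\mathcal{B}_{2\delta+2\kappa}(1)|$ many possible values for $c_j$ and each $c_j$ records "how far the $u_2$-side has advanced, modulo the bounded offset", by pigeonhole two cut points with the same $c_j$ and the same $u_2$-offset phase give a pair $(a,b)$ with $v_1 u_1^{a} = u_2^{b} v_2'$ and $v_1 u_1^{a+p} = u_2^{b+q} v_2'$ for suitable periods $p \le |u_1|$, $q$, so $(p,q)$ and $(a,b)$ can be pumped — this yields the linear structure of the solution set, with all the base and period vectors of magnitude polynomial in $|u_1|+|v_1|+|u_2|+|v_2|$.

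To make this effective and quantitative I would package the above as a finite automaton $\mathcal{A}$: states are triples (phase in $u_1$, phase in $u_2$, current "defect" element of $\mathcal{B}_{2\delta+2\kappa}(1)$) together with a bounded amount of bookkeeping for the $v_1,v_2$ boundary pieces; reading the letter $x_1$ advances the $u_1$-phase, reading $x_2$ advances the $u_2$-phase, transitions are allowed only when the defect element updates consistently (checked via the word problem of $G$, on bounded-length words, hence decidable and in fact just a finite lookup table), and accepting states are those where the defect becomes trivial after absorbing $v_2$ and the boundary conditions from $v_1$ are met. The number of states is polynomial in the input length (linear in $|u_1|,|u_2|$ and constant-factor in $N'$), the alphabet has size $2$ (the symbols $x_1, x_2$), so Theorem~\ref{thm-ko-lin} gives in polynomial time a semilinear representation of the Parikh image with all numbers polynomially bounded in the number of states, hence in $|u_1|+|v_1|+|u_2|+|v_2|$. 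One then checks that this Parikh image is exactly $\{(x_1,x_2) \mid v_1 u_1^{x_1} = u_2^{x_2} v_2 \text{ in } G\}$: soundness is immediate from the construction, and completeness is precisely the repeated application of Lemma~\ref{lemma-rectangle} sketched above, which shows every solution decomposes into a bounded "transient" part plus periodic blocks realised by cycles in $\mathcal{A}$.

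The main obstacle I anticipate is \emph{completeness} of the automaton construction — i.e.\ proving that every solution $(x_1,x_2)$ actually corresponds to an accepting run, which forces one to control how the bounded "defect" element $c$ propagates when one slides the cut point of $u_1^{x_1}$ by a single period. This requires a careful bookkeeping lemma: sliding the cut by $u_1$ on the left side changes the defect in a way determined only by the current defect and the (bounded) local picture, independent of how far along we are, so that the process is genuinely finite-state. Establishing this "local determinism of the defect" cleanly, and simultaneously tracking the $u_2$-side cut, is where the real work lies; once it is in place, the polynomial magnitude bound is a free consequence of Theorem~\ref{thm-ko-lin}.
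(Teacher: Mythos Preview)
Your proposal is essentially the same approach as the paper's: build an NFA over a two-letter alphabet whose states are triples (phase in $u_1$, phase in $u_2$, bounded defect element), show its Parikh image is exactly the solution set $S$, and invoke Theorem~\ref{thm-ko-lin} to extract a polynomial-magnitude semilinear representation. The paper also separates out the finitely many ``small'' solutions (where $x_1$ is below the threshold needed for Lemma~\ref{lemma-rectangle}) as a set $S_1$, and builds the automaton only for the complementary set $S_2$; your ``bounded bookkeeping for the $v_1,v_2$ boundary pieces'' plays the same role.

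The one point where the paper is sharper than your sketch is precisely the obstacle you flag at the end. Rather than sliding the cut one $u_1$-period at a time and invoking Lemma~\ref{lemma-rectangle} repeatedly (which, as you note, does not obviously give a monotone or locally-determined sequence of cuts on the $u_2$-side), the paper applies Lemma~\ref{lemma-rectangle} \emph{once} to peel off prefix and suffix pieces of bounded length, and then applies Lemma~\ref{lemma-asynch-fellow-travel} to the two remaining middle quasigeodesics, whose endpoints are now at distance $\le 2\delta+2\kappa$. The resulting asynchronous $\gamma$-fellow-travelling ladder (with $\gamma = K(\delta,\lambda,\epsilon,2\delta+2\kappa)$, so the defect lives in $\mathcal{B}_\gamma(1)$ rather than $\mathcal{B}_{2\delta+2\kappa}(1)$) is exactly the ``local determinism of the defect'' you are after: each rung of the ladder advances one letter on one side, updates the defect, and emits a $(1,0)$ or $(0,1)$ when a full $u_1$- or $u_2$-block is completed. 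Completeness is then immediate from the ladder, with no further bookkeeping lemma needed.
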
  

\begin{proof}
Let $S := \{ (x_1, x_2) \in \N \times \N \mid v_1 u_1^{x_1} = u_2^{x_2} v_2 \text{ in } G\}$.
We will define an NFA $\mathcal{A}$ over the alphabet $\{a_1,a_2\}$ such that the Parikh image
of $L(A)$ is $S$.
Moreover, the number of states of $\mathcal{A}$ is polynomial in $|u_1|+|u_2|+|v_1|+|v_2|$. This allows us to apply Theorem~\ref{thm-ko-lin}.
We will allow transitions that are labelled with words (having length polynomial in $|u_1|+|u_2|+|v_1|+|v_2|$).
Moreover, instead of writing in the transitions these words, we write their Parikh images (so, for instance,
a transition $p \xrightarrow{a_1^2 a_2^3} q$ is written as $p \xrightarrow{(2,3)} q$.

Let $\ell_i = |u_i|$ and $m_i = |v_i|$.  Take the constant $\kappa$ from Lemma~\ref{lemma-rectangle} and
define $N_1 = \lambda (m_1 + 2 \delta + \kappa) + \epsilon$ and
$N_2 =  \lambda (m_2 + 2 \delta + \kappa) + \epsilon$.
We split the set $S$ into two parts:
\begin{itemize}
\item $S_1 = S \cap \{ (n_1, n_2) \in \N \times \N \mid  n_1 < (N_1+N_2)/\ell_1 \}$
\item $S_2 = S \cap \{ (n_1, n_2) \in \N \times \N \mid  n_1 \ge (N_1+N_2)/\ell_1 \}$
\end{itemize}
For all $(n_1,n_2) \in S_1$ we have $|u_1^{n_1}| = n_1 \ell_1 <  N_1+N_2$. Hence, $|\shlex(u_2^{n_2})| =
|\shlex(v_1 u_1^{n_1} v_2^{-1})| < N_1+N_2+m_1+m_2$.
Since $u_2^{n_2}$ is $(\lambda,\epsilon)$-quasigeodesic we get
$|u_2^{n_2}| = n_2 \ell_2 <  \lambda (N_1 + N_2 + m_1 + m_2) + \epsilon$, i.e., $n_2  <  (\lambda (N_1 + N_2 + m_1 + m_2) + \epsilon)/\ell_2$.
Hence, the set $S_1$ is finite  and has a semilinear representation where all numbers are bounded by 
$\mathcal{O}(m_1 + m_2)$.

We now deal with pairs $(n_1,n_2) \in S_2$, where $v_1 u_1^{n_1} = u_2^{n_2} v_2$ in $G$ and 
$n_1 \ge (N_1+N_2)/\ell_1$, i.e., $|u_1^{n_1}| \ge  N_1+N_2$. Consider such a pair $(n_1, n_2)$
and the quasigeodesic rectangle consisting of the four paths
$Q_1 = P[v_1]$,  $P_1 = v_1 \cdot P[u_1^{n_1}]$, $P_2 = P[u_2^{n_2}]$, and
$Q_2 = u_2^{n_2} \cdot P[v_2]$.
We factorize the word $u_1^{n_1}$ as $u_1^{n_1} = x y z$ with $|x| =  N_1$
and $|z| = N_2$. 
By Lemma~\ref{lemma-rectangle} we can factorize $u_2^{n_2}$ as $u_2^{n_2} = x' y' z'$ such that
there exist $c,d \in \mathcal{B}_{2\delta+2\kappa}(1)$ with $v_1 x = x' c$ and 
$d z = z' v_2$ in $G$, see Figure~\ref{fig-run} (where $n_1 = 20$, $n_2 = 10$, $\ell_1 = 2$ and $\ell_2 = 4$). Since
$u_2^{n_2}$ is $(\lambda,\epsilon)$-quasigeodesic, we have
\begin{eqnarray}
|x'| & \le & \lambda(m_1 + |x| + 2\delta+2\kappa) + \epsilon = \lambda(m_1 + N_1  + 2\delta+2\kappa) + \epsilon, \label{length-x'} \\
|z'|  & \le & \lambda(m_2 + |z| + 2\delta+2\kappa) + \epsilon =  \lambda(m_2 + N_2 + 2\delta+2\kappa) + \epsilon \label{length-z'}.
\end{eqnarray}
Consider now the subpath $P'_1$ of $P_1$ from $P_1(|x|)$ to $P_1(n_1 \ell_1 - |z|)$ and the subpath $P'_2$ of $P_2$
from $P_2(|x'|)$ to $P_2(n_2 \ell_2 - |z'|)$. These are the paths labelled with $y$ and $y'$, respectively, in 
Figure~\ref{fig-run}.
By Lemma~\ref{lemma-asynch-fellow-travel} these paths
asynchronously $\gamma$-fellow travel, where $\gamma := K(\delta, \lambda,\epsilon, 2\delta+2\kappa)$ is a constant. 
In Figure~\ref{fig-run} this is visualized by the part between the $c$-labelled edge and the $d$-labelled edge.
W.l.o.g. we assume that $\gamma  \geq 2\delta+2\kappa$. 

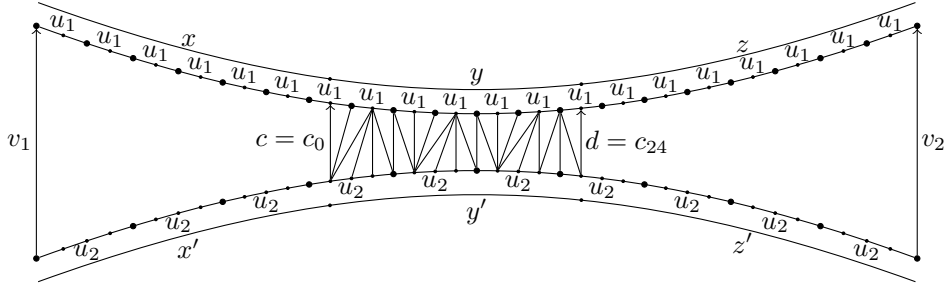
\begin{figure}[t]
 \centering{
 \scalebox{1}{
\begin{tikzpicture}
  \tikzstyle{small} = [circle,draw=black,fill=black,inner sep=.25mm]
  \tikzstyle{tiny} = [circle,draw=black,fill=black,inner sep=.1mm]
  \tikzstyle{zero} = [circle,inner sep=0mm]

    \node[small] (1) {} ;
    \node[small,  right = 11.5cm of 1] (2) {};
    \node[small,  above = 3cm of 1] (3) {};
    \node[small,  right = 11.5cm of 3] (4) {};
    
    \node[zero,  below = 2.5mm of 1] (1') {} ;
    \node[zero,  below = 2.5mm of 2] (2') {} ;
    \node[zero,  above = 2.5mm of 3] (3') {} ;
    \node[zero,  above = 2.5mm of 4] (4') {} ;
    
      \draw [->] (1) to  node[pos=.5,left=-.7mm] {$v_1$} (3);
      \draw [->] (2) to  node[pos=.5,right=-.7mm] {$v_2$} (4);
      
       \draw [-] (1') to [out=20, in=160] node[pos=.1625,below=-.7mm] {$x'$} node[pos=.325, tiny] (a') {}  node[pos=.625, tiny] (a'') {}   node[pos=.5,below=-.7mm] {$y'$}  node[pos=.8125,below=-.7mm] {$z'$}(2');
       \draw [-] (3') to [out=-20, in=-160] node[pos=.1625,above=-.7mm] {$x$} node[pos=.325, tiny] (b') {} node[pos=.625, tiny] (b'') {}    node[pos=.5,above=-.7mm] {$y$} node[pos=.8125,above=-.7mm] {$z$} (4');
       
      \draw [-] (1) to [out=20, in=160] 
      node[pos=.05,below=-.7mm] {$u_2$} 
      node[pos=.025, tiny] (a0) {}  
      node[pos=.05, tiny] (a1) {}  
      node[pos=.075, tiny] (a2) {}  
      node[pos=.1, small] (a3) {}  
      node[pos=.15,below=-.7mm]  {$u_2$}
      node[pos=.125, tiny] (a4) {}  
      node[pos=.15, tiny] (a5) {}  
      node[pos=.175, tiny] (a6) {}  
      node[pos=.2, small] (a7) {}  
      node[pos=.25,below=-.7mm]  {$u_2$}
      node[pos=.225, tiny] (a8) {}  
      node[pos=.25, tiny] (a9) {}  
      node[pos=.275, tiny] (a10) {}  
      node[pos=.3, small] (a11) {}     
      node[pos=.35,below=-.7mm]  {$u_2$}
      node[pos=.325, tiny] (a12) {}  
      node[pos=.35, tiny] (a13) {}  
      node[pos=.375, tiny] (a14) {}  
      node[pos=.4, small] (a15) {} 
      node[pos=.45,below=-.7mm]  {$u_2$}
      node[pos=.425, tiny] (a16) {}  
      node[pos=.45, tiny] (a17) {}  
      node[pos=.475, tiny] (a18) {}  
      node[pos=.5, small] (a19) {}  
      node[pos=.55,below=-.7mm]  {$u_2$}
      node[pos=.525, tiny] (a20) {}  
      node[pos=.55, tiny] (a21) {}  
      node[pos=.575, tiny] (a22) {}  
      node[pos=.6, small] (a23) {}  
      node[pos=.65,below=-.7mm]  {$u_2$}
      node[pos=.625, tiny] (a24) {}  
      node[pos=.65, tiny] (a25) {}  
      node[pos=.675, tiny] (a26) {}  
      node[pos=.7, small] (a27) {}  
      node[pos=.75,below=-.7mm]  {$u_2$}
      node[pos=.725, tiny] (a28) {}  
      node[pos=.75, tiny] (a29) {}  
      node[pos=.775, tiny] (a30) {}  
      node[pos=.8, small] (a31) {}  
      node[pos=.85,below=-.7mm]  {$u_2$}
      node[pos=.825, tiny] (a32) {}  
      node[pos=.85, tiny] (a33) {}  
      node[pos=.875, tiny] (a34) {}  
      node[pos=.9, small] (a35) {}  
      node[pos=.95,below=-.7mm]  {$u_2$}
      node[pos=.925, tiny] (a36) {}  
      node[pos=.95, tiny] (a37) {}  
      node[pos=.975, tiny] (a38) {}  
        (2);
       \draw [-] (3) to [out=-20, in=-160] 
      node[pos=.025,above=-.7mm] {$u_1$} 
      node[pos=.025, tiny] (b0) {}  
      node[pos=.05, small] (b1) {}  
      node[pos=.075,above=-.7mm]  {$u_1$}
       node[pos=.075, tiny] (b2) {}
      node[pos=.1, small] (b3) {}  
      node[pos=.125,above=-.7mm]  {$u_1$}
      node[pos=.125, tiny] (b4) {}  
      node[pos=.15, small] (b5) {}  
      node[pos=.175,above=-.7mm]  {$u_1$}
      node[pos=.175, tiny] (b6) {}  
      node[pos=.2, small] (b7) {}  
      node[pos=.225,above=-.7mm]  {$u_1$}
      node[pos=.225, tiny] (b8) {}  
      node[pos=.25, small] (b9) {}  
      node[pos=.275,above=-.7mm]  {$u_1$}
      node[pos=.275, tiny] (b10) {}  
      node[pos=.3, small] (b11) {}  
      node[pos=.325,above=-.7mm]  {$u_1$}
      node[pos=.325, tiny] (b12) {}  
      node[pos=.35, small] (b13) {}  
      node[pos=.375,above=-.7mm]  {$u_1$}
      node[pos=.375, tiny] (b14) {}  
      node[pos=.4, small] (b15) {}  
      node[pos=.425,above=-.7mm]  {$u_1$}
      node[pos=.425, tiny] (b16) {}  
      node[pos=.45, small] (b17) {}  
      node[pos=.475,above=-.7mm]  {$u_1$}
      node[pos=.475, tiny] (b18) {}  
      node[pos=.5, small] (b19) {}  
      node[pos=.525,above=-.7mm] {$u_1$} 
      node[pos=.525, tiny] (b20) {}
      node[pos=.55, small] (b21) {}  
      node[pos=.575,above=-.7mm]  {$u_1$}
      node[pos=.575, tiny] (b22) {}  
      node[pos=.6, small] (b23) {}  
      node[pos=.625,above=-.7mm]  {$u_1$}
      node[pos=.625, tiny] (b24) {}  
      node[pos=.65, small] (b25) {}  
      node[pos=.675,above=-.7mm]  {$u_1$}
      node[pos=.675, tiny] (b26) {} 
      node[pos=.7, small] (b27) {}  
      node[pos=.725,above=-.7mm]  {$u_1$}
      node[pos=.725, tiny] (b28) {}  
      node[pos=.75, small] (b29) {}  
      node[pos=.775,above=-.7mm]  {$u_1$}
      node[pos=.775, tiny] (b30) {}  
      node[pos=.8, small] (b31) {}  
      node[pos=.825,above=-.7mm]  {$u_1$}
      node[pos=.825, tiny] (b32) {}  
      node[pos=.85, small] (b33) {}  
      node[pos=.875,above=-.7mm]  {$u_1$}
      node[pos=.875, tiny] (b34) {}  
      node[pos=.9, small] (b35) {}  
      node[pos=.925,above=-.7mm]  {$u_1$}
      node[pos=.925, tiny] (b36) {}  
      node[pos=.95, small] (b37) {}  
      node[pos=.975,above=-.7mm]  {$u_1$} 
      node[pos=.975, tiny] (b38) {}
             (4); 
             
      \draw [->] (a12) to  node[pos=.5,left=-.7mm] {$c=c_0$} (b12);
      \draw [-] (a12) to  (b13);
      \draw [-] (a12) to  (b14);
      \draw [-] (a13) to  (b14);
      \draw [-] (a14) to  (b14);
      \draw [-] (a15) to  (b14);
      \draw [-] (a15) to  (b15);
      \draw [-] (a16) to  (b15);
      \draw [-] (a16) to  (b16);
      \draw [-] (a16) to  (b17);
      \draw [-] (a16) to  (b18);
      \draw [-] (a17) to  (b18);
      \draw [-] (a18) to  (b18);
      \draw [-] (a19) to  (b18);
      \draw [-] (a19) to  (b19);
      \draw [-] (a20) to  (b19);
      \draw [-] (a20) to  (b20);
      \draw [-] (a20) to  (b21);
       \draw [-] (a20) to  (b22);
       \draw [-] (a21) to  (b22);
       \draw [-] (a22) to  (b22);
       \draw [-] (a22) to  (b23);
        \draw [-] (a23) to  (b23);
         \draw [-] (a24) to  (b23);
      \draw [->] (a24) to node[pos=.5,right=-.7mm] {$d=c_{24}$} (b24);
   \end{tikzpicture}}}
\caption{\label{fig-run} Example for the construction from the proof of Lemma~\ref{lemma-k=2}.}
  \end{figure}

We now define the NFA $\mathcal{A}$ over the alphabet $\{a_1, a_2\}$ (recall the we replace edge labels from $\{a_1,a_2\}^*$ by their Parikh images). The state set of $\mathcal{A}$
is
\[
Q = \{ q_0, q_f \} \cup \{ (i,b,j) \mid 0 \le i < \ell_1, 0 \le j < \ell_2, b \in \mathcal{B}_\gamma(1)  \} .
\]
The  unique initial state is $q_0$ and the unique final state
is $q_f$.
To define the transitions of $\mathcal{A}$ set
$p = \lfloor N_1 / \ell_1 \rfloor = \lfloor|x|/|u_1|\rfloor$, $r = N_1 \bmod \ell_1 = |x| \bmod |u_1|$,
$s = \lceil N_2 / \ell_1 \rceil = \lceil |z| / |u_1| \rceil$, $t = -N_2 \bmod \ell_1 = -|z| \bmod |u_1|$.
Thus, we have $x = u_1^p u_1[:r]$ and $z = u_1^s[t+1:]$.
There are the following types of transitions (transitions without a label are implicitly labelled by the zero vector $(0,0)$),
where $0 \leq i < \ell_1$, $0 \le j < \ell_2$,  $b,b' \in \mathcal{B}_{\gamma}(1)$.
\begin{enumerate}
\item $q_0 \xrightarrow{(p,p')} (r, c, r')$ if there exists 
a number $0 \le k \le \lambda(m_1 + N_1  + 2\delta+2\kappa) + \epsilon$ (this is the possible range for the length of $x'$ in \eqref{length-x'}) 
such that $p' = \lfloor k/\ell_2\rfloor$, $r' = k \bmod \ell_2$, and 
$v_1 u_1^p u_1[:r] = u_2^{p'} u_2[:r'] c$ in $G$.
\item $(i, b, j) \xrightarrow{} (i+1, b', j)$ if $i+1 < \ell_1$ and $b u_1[i+1] = b'$ in $G$.
\item $(\ell_1-1, b, j) \xrightarrow{(1,0)} (0, b', j)$ if $b u_1[\ell_1] = b'$ in $G$.
 \item $(i, b, j) \xrightarrow{} (i, b', j+1)$ if $j+1 < \ell_2$ and $b =  u_2[j+1]b'$ in $G$.
\item $(i,b,\ell_2-1) \xrightarrow{(0,1)} (i, b', 0)$ if $b =  u_2[\ell_2]b'$ in $G$.
\item $(t, d, t')   \xrightarrow{(s,s')} q_f$ if there exists 
a number $0 \le k \le \lambda(m_2 + N_2 + 2\delta+2\kappa) + \epsilon$ (this is the possible range for the length of $z'$ in \eqref{length-z'}) 
such that $s' = \lceil k/\ell_2\rceil$, $t' = -k \bmod \ell_2$, and 
$d u_1[t+1:] u_1^s = u_2[t'+1:] u_2^{s'}  v_2$ in $G$.
\end{enumerate}
The construction is best explained using the example in Figure~\ref{fig-run}. As mentioned above, the vertical lines between $c = c_0$ and $d = c_{24}$ 
represent the asynchronous $\gamma$-fellow travelling. The vertical lines are labelled with group elements $c_0, c_1, \ldots, c_{23}, c_{24} \in \mathcal{B}_\gamma(1)$
from left to right. In order to not overload the figure we only show $c_0$ and $c_{24}$. Note that 
$x = u_1^6 u_1[1]$, $x' = u_2^3 u_2[1]$, $z = u_1^8[2:]$, $z' = u_2^4[2:]$.
Basically, the NFA $\mathcal{A}$ moves the vertical edges from left to right and thereby stores (i) the label $c_i$ of the vertical edge,
(ii) the position in the current $u_2$-factor where the vertical edge starts (position $0$ means that we have just completed a $u_2$-factor),
and (iii) the position in the current $u_1$-factor where the vertical edge ends. If a $u_1$-factor (resp., $u_2$-factor) is completed then
the automaton makes a $(1,0)$-labelled (resp., $(0,1)$-labelled) transition.
The automaton run corresponding to Figure~\ref{fig-run} is:
\begin{align*}
 q_0 \xrightarrow{(6,3)} & (1,c_0,1) \xrightarrow{(1,0)}  (0,c_1,1) \to (1,c_2,1) \to (1,c_3,2) \to (1,c_4,3) \xrightarrow{(0,1)} \\
 & (1, c_5,0) \xrightarrow{(1,0)} (0,c_6,0) \to (0,c_7,1) \to (1, c_8,1)  \xrightarrow{(1,0)} (0,c_9,1) \to \\
 & (1,c_{10},1) \to (1,c_{11},2) \to (1,c_{12},3) \xrightarrow{(0,1)} (1, c_{13}, 0) \xrightarrow{(1,0)} (0,c_{14},0) \to \\
 & (0, c_{15},1) \to (1, c_{16}, 1) \xrightarrow{(1,0)} (0, c_{17}, 1) \to (1, c_{18}, 1) \to (1, c_{19}, 2) \to \\
 & (1, c_{20}, 3) \xrightarrow{(1,0)} (0, c_{21}, 3) 
 \xrightarrow{(0,1)} (0, c_{22}, 0) \to (0, c_{23},1) \to (1, c_{24},1) \xrightarrow{(8,4)} q_f
\end{align*}
With the above intuition it is straightforward to show that the Parikh image of $L(\mathcal A)$ is indeed $S_2$.
Also note that the number of states of $\mathcal{A}$ is bounded by $\mathcal{O}(\ell_1 \ell_2)$.
The statement of the lemma then follows directly from Theorem~\ref{thm-ko-lin}.
\end{proof}

\subsection{Reduction to quasi-geodesic knapsack expressions}

Let us call a knapsack expression $E =  u_1^{x_1} v_1 u_2^{x_2} v_2 \cdots u_{k}^{x_{k}} v_{k}$ over $G$
$(\lambda,\epsilon)$-quasi\-geo\-desic if  all words $u_1, \ldots, u_k, v_1, \ldots, v_{k}$ are geodesic and for all $1 \le i \le k$ and all $n \geq 0$ the word
$u_i^n$ is $(\lambda,\epsilon)$-quasigeodesic.  We say that $E$ has {\em infinite order}, if all $u_i$ represent group elements of infinite order.
The goal of this section is to reduce a knapsack expression to a finite number 
(in fact, exponentially many) of $(\lambda,\epsilon)$-quasigeodesic 
knapsack expressions  of infinite order for certain constants $\lambda,\epsilon$:

\begin{proposition} \label{prop-geodesic-knapsack}
There exist fixed constants $\lambda,\epsilon$ such that
from a given knapsack expression $E$ over $G$ one can compute a finite list of  knapsack 
expressions $E_i$ ($i \in I$) over $G$ such that 
$$\Sol(E) = \bigcup_{i \in I} \big( (m_i \cdot \Sol(E_i) + d_i) \oplus  \mathcal{F}_i \big),$$
where the following additional properties hold:
\begin{itemize}
\item every $\mathcal{F}_i$ is a semilinear subset of $\N^Y$ for a subset $Y \subseteq X_E$,
\item the magnitude of every $\mathcal{F}_i$ is bounded by a constant that only depends on  $G$,
\item every $E_i$ is a $(\lambda,\epsilon)$-quasigeodesic knapsack expression of infinite order with variables from $Z := X_E \setminus Y$,
\item the size of every $E_i$ is bounded by $\mathcal{O}(|E|)$, and
\item all $m_i$ and $d_i$ are vectors from $\N^{Z}$ where all entries are bounded by a constant that only depends on $G$
(here, $m_i \cdot \Sol(E_i) = \{ m_i \cdot z \mid z \in \Sol(E) \}$ and $m_i \cdot z$ is the pointwise multiplication of the vectors $m_i$ and $z$).
\end{itemize}
\end{proposition}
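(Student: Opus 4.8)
The plan is to normalise $E$ in three stages, the first two purely combinatorial and the third the only one that uses hyperbolicity.

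Stage~1: first I would replace every word $u_j$ and $v_j$ occurring in $E$ by its shortlex normal form; this is computable in a hyperbolic group, does not change the elements represented (hence leaves $\Sol(E)$ unchanged), and only shortens the words, so we may assume all $u_j,v_j$ are geodesic. Stage~2: next I would eliminate every variable $x_j$ whose base $u_j$ does \emph{not} represent an element of infinite order. If $u_j$ represents $1$ (in particular if $u_j=\varepsilon$), then $u_j^{x_j}=1$ for every value, so I delete the factor $u_j^{x_j}$ and let $x_j$ range freely over $\N$. If $u_j$ represents an element $g_j\neq 1$ of finite order $o_j$ — and $o_j$ is bounded by a constant $o(G)$, since a hyperbolic group has only finitely many conjugacy classes of finite subgroups, hence bounded torsion — then $u_j^{x_j}$ depends only on $x_j\bmod o_j$, so I branch over the residue $r_j\in\{0,\dots,o_j-1\}$, in each branch replacing $u_j^{x_j}$ by the constant word $u_j^{r_j}$ (of length $\le o(G)\,|u_j|$), deleting $x_j$, and constraining $x_j$ to the class $o_j\N+r_j$ (semilinear, magnitude $\le o(G)$); adjacent constant words are then merged, a leading constant being moved to the end as permitted by the remark preceding the proposition. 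The variables handled in Stage~2 form a set $Y\subseteq X_E$ (the same for every branch); the product over $j\in Y$ of the corresponding sets $\N$ or $o_j\N+r_j$ is the semilinear set $\mathcal{F}_i\subseteq\N^Y$ of the branch, of magnitude $\le o(G)$, and it depends only on the chosen residues, not on the remaining variables $Z:=X_E\setminus Y$.

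Stage~3 rests on a structural fact about hyperbolic groups, which I would isolate as a lemma strengthening Lemma~\ref{lemma-cyclic-words-quasi-geo}: there are constants $\lambda,\epsilon,M$ depending only on $G$ such that from a geodesic word $u$ representing an infinite-order element $g$ one can compute a number $m\in\{1,\dots,M\}$ and words $s,w\in\Sigma^*$ with $w$ geodesic, $|s|,|w|=\mathcal{O}(|u|)$, $g^m=s\,w\,s^{-1}$ in $G$, and $w^n$ $(\lambda,\epsilon)$-quasigeodesic for all $n\ge 0$. Granting this, for each remaining $j$ (so $g_j$ has infinite order) I branch over $d_j\in\{0,\dots,m_j-1\}$, and in the branch $x_j=m_j y_j+d_j$ (with $y_j$ a fresh variable) I rewrite
\[
u_j^{x_j}\;=\;u_j^{d_j}\,(u_j^{m_j})^{y_j}\;=\;\bigl(u_j^{d_j}s_j\bigr)\,w_j^{\,y_j}\,\bigl(s_j^{-1}\bigr),
\]
absorbing the two constant words, of length $\mathcal{O}(|u_j|)$, into the adjacent $v$-words. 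After re-shortlex-reducing the modified $v$-words one obtains a $(\lambda,\epsilon)$-quasigeodesic knapsack expression $E_i$ of infinite order with variable set exactly $Z$, and $|E_i|=\mathcal{O}(|E|)$ because each rewriting inflates the total length only by a constant factor. A solution $y$ of $E_i$ yields the corresponding original exponents via $x_j=m_j y_j+d_j$; thus on the $Z$-coordinates the set of these is $m_i\cdot\Sol(E_i)+d_i$, where $m_i=(m_j)_{j\in Z}$ and $d_i=(d_j)_{j\in Z}$ have all entries bounded by $M$, and this is independent of the $Y$-coordinates. Running over all branches — finitely many, in fact exponentially many in the depth of $E$ — gives the asserted identity $\Sol(E)=\bigcup_{i\in I}\bigl((m_i\cdot\Sol(E_i)+d_i)\oplus\mathcal{F}_i\bigr)$. (If a branch leaves $Z=\emptyset$ there is no genuine knapsack expression; one then just tests whether the residual constant word equals $1$ in $G$ and contributes $\mathcal{F}_i$ or $\emptyset$ accordingly. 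Everything above is effective.)

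The main obstacle is the structural fact invoked in Stage~3, which is where hyperbolicity is genuinely used: Lemma~\ref{lemma-cyclic-words-quasi-geo} alone does not suffice because its constants grow with $|g|$, whereas here $\lambda,\epsilon$ must depend only on $G$, so one must first move $g$ to a more canonical representative. I would prove it in two steps. First, conjugate $g$ (possibly after replacing it by a bounded power to normalise it) by a short element to a \emph{cyclically reduced} element — one that is shortest, up to an additive constant, in its conjugacy class: since conjugating by a suitable short element strictly shortens a word that is not yet cyclically reduced, this terminates after $\mathcal{O}(|g|)$ steps and the accumulated conjugator has length $\mathcal{O}(|g|)$, which is precisely what the linear size bound $|E_i|=\mathcal{O}(|E|)$ needs. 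Second, show that powers of a cyclically reduced element are $(\lambda,\epsilon)$-quasigeodesic with $\lambda,\epsilon$ depending only on $\delta$: this is the standard setting for the local-to-global principle for quasigeodesics in $\delta$-hyperbolic spaces — one checks that $w^n$ is $\zeta$-locally quasigeodesic for some $\zeta$ of the order of $|w|$ (using cyclic reducedness and that $w$ itself is geodesic) and then promotes this to a global quasigeodesic with uniform constants. Both steps being effective, this completes the reduction.
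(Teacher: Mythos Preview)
Your approach is correct and matches the paper's proof: your Stages~1--2 are its preliminary normalisation and Step~1, and your Stage~3 is its Step~2. For the structural fact you isolate, the paper gives a concrete realisation via a single half-swap $u_i=u_{i,1}u_{i,2}\mapsto\tilde u_i=\shlex(u_{i,2}u_{i,1})$ followed by a short/long dichotomy on $|\tilde u_i|$: in the short case Lemma~\ref{lemma-cyclic-words-quasi-geo} already gives bounded constants (since $|\tilde u_i|$ is bounded), and in the long case Lemmas~\ref{lemma-Ep-Ho1} and~\ref{lemma-Ep-Ho2} (the latter being Delzant's argument, which is precisely where the bounded power $m_i\le K$ arises) produce a further conjugator $c_i\in\mathcal{B}_{4\delta}(1)$ after which the powers are even geodesic --- this is exactly the content your local-to-global sketch gestures at but does not quite pin down.
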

Once Proposition~\ref{prop-geodesic-knapsack} is shown, we can conclude the proof of 
Theorem~\ref{thm-hyperbolic-semilinear} by showing
that all sets $\Sol(E_i)$ are semilinear and that their magnitudes are bounded by
$p(|E_i|)$ for a fixed polynomial $p(n)$. This will be achieved in the next section. 

For the proof of Proposition~\ref{prop-geodesic-knapsack} we mainly build on results from \cite{EpsteinH06}.
We fix the constants $L = 34\delta+2$ 
and $K = |\mathcal{B}_{4\delta}(1)|^2$. 

\begin{lemma}[c.f.~\mbox{\cite[Lemma~3.1]{EpsteinH06}}] \label{lemma-Ep-Ho1}
Let $u = u_1 u_2$ be shortlex reduced, where $|u_1| \le |u_2| \le |u_1|+1$.
Let $\tilde u = \shlex(u_2 u_1)$.
If $|\tilde u| \ge 2L+1$ then for every $n \geq 0$, the word $\tilde{u}^n$ is $L$-local $(1,2\delta)$-quasigeodesic.
\end{lemma}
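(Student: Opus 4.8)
The plan is to follow the cancellation argument from \cite[Lemma~3.1]{EpsteinH06}. Write $g$ for the group element represented by $u$, so $\tilde u$ represents the conjugate $u_2^{-1} g u_2 \cdot u_2 = u_2 u_1 u_2^{-1} u_2$... more precisely $\tilde u$ represents $u_2 u_1$, which is conjugate to $u_1 u_2 = g$. The key point is that $\tilde u$ is cyclically (almost) reduced: since $u = u_1 u_2$ is shortlex reduced and we chose the split point essentially in the middle, the word $\tilde u = \shlex(u_2 u_1)$ cannot admit much cancellation when concatenated with a copy of itself, because a long cancellation in $\tilde u \tilde u$ would, after conjugating back, produce a shortlex representative of $g$ strictly shorter than $u$, or would reveal that $u_1$ or $u_2$ was not geodesic.

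First I would fix $n$ and take two points $P[\tilde u^n](a)$ and $P[\tilde u^n](b)$ with $|a-b| \le L$; I must show $|a-b| \le d_\Gamma(P[\tilde u^n](a), P[\tilde u^n](b)) + 2\delta$. Since $|a - b| \le L$, the relevant subword of $\tilde u^n$ between these two positions spans at most two consecutive copies of $\tilde u$, so it suffices to analyze the word $\tilde u \tilde u$ (a subword of $\tilde u^2$) and show it is $(1,2\delta)$-quasigeodesic on subwords of length $\le L$. Consider a geodesic $P$ from the initial to the terminal vertex of the relevant subword of $\tilde u \tilde u$, and build a geodesic quadrilateral (or use $\delta$-slimness of the geodesic triangle/rectangle) comparing the path labelled by this subword with $P$. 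The midpoint of $\tilde u \tilde u$ is the vertex separating the two copies of $\tilde u$; the hypothesis $|\tilde u| \ge 2L+1$ guarantees that a window of length $L$ centered anywhere is ``far enough'' from one of the endpoints of the two-copy block that the $\delta$-slimness forces the deviation from the geodesic to be at most $\delta$ on each side, giving the additive $2\delta$.

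The main obstacle I expect is the bookkeeping needed to rule out cancellation at the seam between the two copies of $\tilde u$: one has to use that $u = u_1u_2$ and $\shlex(u_2u_1)$ share the same underlying length up to $\pm 1$, together with the shortlex minimality of both $u$ and $\tilde u$, to bound the overlap when $\tilde u$ is read twice. Concretely, if $\tilde u \tilde u$ were to cancel down to something of length $< 2|\tilde u| - 2L$, then cyclic conjugation would yield a word representing a power of $g$ whose length is incompatible with $u^{?}$ being shortlex reduced — here one invokes that $\tilde u$ and $u$ are conjugate and that shortlex reducedness of $u$ is inherited by its factors $u_1, u_2$. I would isolate this combinatorial statement as the heart of the argument and then let $\delta$-slimness of geodesic triangles do the rest, exactly as in \cite{EpsteinH06}; all remaining steps are routine metric estimates in the Cayley graph and I would not spell them out in detail.
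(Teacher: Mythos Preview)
The paper does not give its own proof of this lemma; it is quoted directly from \cite[Lemma~3.1]{EpsteinH06} and used as a black box. So there is nothing in the paper to compare your sketch against, and I can only comment on the sketch itself relative to the Epstein--Holt argument.

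You correctly reduce to a subword $s$ of $\tilde u^2$ of length $\le L$ that straddles the seam (a subword lying inside one copy of $\tilde u$ is geodesic because $\tilde u$ is shortlex reduced). But the mechanism you propose for the seam case is off. You argue that too much cancellation in $\tilde u\tilde u$ would, after ``conjugating back,'' force a representative of some power of $g$ shorter than shortlex minimality of $u$ permits. This does not work: $\tilde u^2$ represents a conjugate of $g^2$, and $u^2$ is typically \emph{not} geodesic, so no contradiction arises from the length of $g^2$. The global-cancellation picture is a dead end.

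The Epstein--Holt argument is local and goes through the geodesic triangle with vertices $1$, $u_2$, $u_2 u_1$ and sides labelled $u_2$, $u_1$, $\tilde u$. Write the seam subword as $s = ab$ with $a$ a suffix and $b$ a prefix of $\tilde u$, each of length $\le L$. Thinness of this triangle, together with the length hypothesis $|\tilde u|\ge 2L+1$ (which forces $|u_1|,|u_2|$ to be large compared to $L$ and $\delta$; this is exactly where $L = 34\delta+2$ is calibrated), pins the endpoints of $a$ to points on the $u_1$-side and the endpoints of $b$ to points on the $u_2$-side, each within $\delta$. Hence $ab$ is $2\delta$-close in the Cayley graph to a word $a'b'$ with $a'$ a suffix of $u_1$ and $b'$ a prefix of $u_2$. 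But $a'b'$ is then a \emph{subword of $u = u_1 u_2$}, hence geodesic, and the $(1,2\delta)$-estimate for $s$ follows. Your sketch never isolates this step---transferring the seam subword to a subword of $u$ via the triangle---and that is the idea you are missing. Your parenthetical ``or would reveal that $u_1$ or $u_2$ was not geodesic'' is closer in spirit, but it is the main point rather than an afterthought.
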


The following lemma is not stated explicitly in \cite{EpsteinH06} but is shown in Section~3.2
(where the main argument is attributed to Delzant).

\begin{lemma}[c.f.~\cite{EpsteinH06}] \label{lemma-Ep-Ho2}
Let $u$ be geodesic such that $|u| \ge 2L+1$ and for every $n \geq 0$, the word $u^n$ is $L$-local $(1,2\delta)$-quasigeodesic.
Then one can compute $c \in \mathcal{B}_{4\delta}(1)$ and an integer $1 \leq m \leq K$ such that $(\shlex(c^{-1} u^m c))^n$
is geodesic for all $n \geq 0$.
\end{lemma}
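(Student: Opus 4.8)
The plan is to reduce the desired ``all powers geodesic'' property to an exact statement about the stable translation length of $g$, and then to produce the pair $(c,m)$ by a pigeonhole argument on a bounded ladder along a quasi-axis of $g$. First I would unpack the hypothesis. Since $u$ is geodesic and every $u^n$ is $L$-local $(1,2\delta)$-quasigeodesic with $L=34\delta+2$, the standard local-to-global principle for hyperbolic spaces turns each $u^n$ into a global $(\lambda_0,\epsilon_0)$-quasigeodesic with uniform computable constants; in particular the element $g$ represented by $u$ has infinite order, and the bi-infinite periodic path labelled $\cdots uuu\cdots$ through $\ldots,g^{-1},1,g,g^2,\ldots$ is a quasigeodesic line that fellow-travels a bi-infinite geodesic $\alpha$ within a bound that, thanks to the sharp local constant, may be taken to be $2\delta$.

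Next I would reformulate the target. Let $\tau=\lim_{n\to\infty}|g^n|/n$ be the stable translation length; since $n\mapsto|g^n|$ is subadditive, Fekete's lemma gives $\tau=\inf_n|g^n|/n$, so $|g^n|\ge n\tau$ for all $n$. For any $c\in G$ and $m\ge 1$ put $h=c^{-1}g^m c$, so that $h^n=c^{-1}g^{mn}c$ and the stable length of $h$ equals $m\tau$; hence $|h^n|\ge nm\tau$, while subadditivity gives $|h^n|\le n|h|$. Consequently $\shlex(h)^n$ is geodesic for every $n$ (equivalently $|h^n|=n|h|$) if and only if $|h|=m\tau$. Thus it suffices to find $c\in\mathcal{B}_{4\delta}(1)$ and $1\le m\le K$ such that the geodesic length of $c^{-1}g^m c$ equals its stable length $m\tau$; in particular such an $m$ must clear the denominator of the (necessarily rational) number $\tau$, which explains why both a bounded power and a bounded conjugation are genuinely needed.

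To produce $(c,m)$ I would run a pigeonhole on a decorated quasi-axis. Comparing $\alpha$ with its translates $g\cdot\alpha$ and $g^{-1}\cdot\alpha$ — each within $2\delta$ of the common quasi-axis, hence within $4\delta$ of $\alpha$ — I attach to each vertex $\alpha(t)$ the pair of bounded ladder elements of $\mathcal{B}_{4\delta}(1)$ connecting $\alpha$ to $g\cdot\alpha$ and to $g^{-1}\cdot\alpha$ at $t$, so that the decoration at each position takes at most $|\mathcal{B}_{4\delta}(1)|^2=K$ values. Working on a window of $K+1$ consecutive positions of a sufficiently long, explicitly computed geodesic $\shlex(g^N)$, the pigeonhole principle forces two positions carrying identical decorations; the number of $g$-translations between them is an integer $m$ with $1\le m\le K$, and the common ladder element supplies $c\in\mathcal{B}_{4\delta}(1)$. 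The repeated decoration certifies that the segment between the two positions extends periodically to a genuine bi-infinite geodesic on which $c^{-1}g^m c$ acts as translation by exactly one period, whence $|c^{-1}g^m c|=m\tau$, which by the previous paragraph is precisely what is required. Effectiveness is immediate, since the word problem, geodesic lengths, shortlex normal forms, and membership in the balls $\mathcal{B}_r(1)$ are all computable in a hyperbolic group, so the decorations can be computed and $(c,m)$ read off directly.

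The main obstacle is the certification step in the last paragraph, namely upgrading ``bounded data repeats along a single geodesic'' to ``a genuine bi-infinite geodesic axis of exact per-period length $m\tau$''. The subtlety is exactly that a locally geodesic or locally periodic path need only be a global quasigeodesic, not a global geodesic, so the exact length identity cannot be concluded from a purely local check. The argument must instead feed the repetition back into the stable-length characterization of the second paragraph: the matched forward and backward ladder elements force $|g^{mn}|$ to grow with per-period increment exactly $m\tau$ and with no accumulating cancellation, and it is this that pins the geodesic length of the conjugate to its stable length and thereby yields the geodesicity of all powers of $\shlex(c^{-1}u^m c)$.
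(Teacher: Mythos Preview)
The paper does not give its own proof of this lemma; it simply defers to Section~3.2 of \cite{EpsteinH06}, where the argument is attributed to Delzant. Your overall strategy---reformulate via the stable translation length $\tau$ and then run a pigeonhole on short connectors along a quasi-axis---is precisely the shape of that argument, and your reduction (that $(\shlex(c^{-1}u^mc))^n$ is geodesic for all $n$ if and only if $|c^{-1}g^mc|=m\tau$) is correct and a clean way to state the target.

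There is, however, a genuine misalignment in your pigeonhole. You decorate \emph{vertices $\alpha(t)$ of the geodesic} with ladder elements to $g\cdot\alpha$ and $g^{-1}\cdot\alpha$, take $K+1$ consecutive such vertices, and then speak of ``the number of $g$-translations between'' two matching positions. But a repeat at parameters $t<t'\le t+K$ gives you a geodesic segment $\alpha(t)^{-1}\alpha(t')$ of length $t'-t$; it does not hand you an integer power $m$ of $g$, nor a conjugator $c$ with $c^{-1}g^mc=\alpha(t)^{-1}\alpha(t')$. In the Delzant argument the pigeonhole is run on the $K+1$ \emph{powers} $g^0,g^1,\ldots,g^K$: for each $i$ one records a pair of connectors in $\mathcal{B}_{4\delta}(1)$ from $g^i$ to a fixed geodesic and to its $g$-translate, so that a collision at indices $i<j$ immediately yields $m=j-i\in[1,K]$ and $c$ the common connector, with $c^{-1}g^mc$ represented by the geodesic segment between the two projected points. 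Your setup, as written, does not produce $(c,m)$ of the required form.

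Even after repairing this, the step you yourself flag as ``the main obstacle'' is not closed. Your last paragraph asserts that the matched forward and backward ladder data force the per-period increment to be exactly $m\tau$, but you do not show it; and this is the whole content of the lemma. The mechanism in \cite{EpsteinH06} is that equality of the \emph{second} connector (to the $g$-translate of the geodesic) at indices $i$ and $j$ forces the concatenation of the segment with its $g^m$-shift to lie on a single geodesic, which is what rules out any additive defect and pins $|c^{-1}g^mc|$ to its stable length. That argument needs to be written out; without it the proposal remains a correct plan with the decisive step missing.
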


\medskip
\noindent
{\em Proof of Proposition~\ref{prop-geodesic-knapsack}.}
We set $\lambda = N (2L+1)$ and $\epsilon = 2N^2 (2L+1)^2  + 2N (2L+1)$, where 
$N = |\mathcal{B}_{2\delta}(1)|$.
Consider a knapsack expression $E = u_1^{x_1} v_1 u_2^{x_2} v_2 \cdots u_{k}^{x_{k}} v_k$.
We can assume that every $u_i$ is shortlex reduced.
Let $g_i \in G$ be the group element represented by the word $u_i$. 

\medskip
\noindent
{\em Step 1.} In this first step we show how to reduce to the case where all $g_i$
have infinite order.
In a hyperbolic group $G$ the order of torsion elements is bounded by a fixed constant 
that only depends on $G$, see also the proof of \cite[Theorem~6.7]{MyNiUs14}).
This allows to check for each $g_i$ whether it has finite order, and to compute the order
in the positive case. Let $Y \subseteq \{ x_1, \ldots, x_k\}$ be those variables $x_i$ such that
$g_i$ has finite order. For $x_i \in Y$ let $o_i < \infty$ be the order of $g_i$.
Let $\mathcal{F}$ be the set of mappings $f : Y \to \mathbb{N}$ such that $0 \leq f(x_i) < o_i$
for all $x_i \in Y$. For every such mapping $f \in \mathcal{F}$ let 
$E_f$ be the knapsack expression that is obtained from $E$ by replacing
for every $x_i \in Y$ the power $u_i^{x_i}$ by $u_i^{f(x_i)}$ (which is merged with the word $v_i$). 
Moreover, let $\mathcal{F}_f$ be the set of all mappings $g : Y \to \mathbb{N}$  such that
$g(x_i) \equiv f(x_i) \bmod o_i$ for every $x_i \in Y$. Then
the set $\Sol(E)$ can be written as
$$
\Sol(E) = \bigcup_{f \in \mathcal{F}} \Sol(E_f) \oplus  \mathcal{F}_f .
$$
Note that $\mathcal{F}_f$ is a semilinar set of magnitude  $\mathcal{O}(1)$.

\medskip
\noindent
{\em Step 2.} We now consider a knapsack expression from $\mathcal{F}_f$. To simplify
notation, we denote this expression again with 
$E = u_1^{x_1} v_1 u_2^{x_2} v_2 \cdots u_{k}^{x_{k}} v_k$.
For every $i$, the group element $g_i$ represented by $u_i$ has infinite order.
We factorize $u_i$ uniquely as $u_i = u_{i,1} u_{i,2}$ where $|u_{i,1}| \le |u_{i,2}| \le |u_{i,1}|+1$, and let
$\tilde{u}_i = \shlex(u_{i,2}u_{i,1})$. Note that $|\tilde{u}_i| \le |u_i|$.
Let $\tilde g_i$ be the group element represented by
$\tilde{u}_i$. Since $\tilde g_i$ is conjugated
to $g_i$, also $\tilde g_i$ has infinite order.  By Lemma~\ref{lemma-cyclic-words-quasi-geo},
for every $n \geq 0$, the word $\tilde{u}_i^n$ is $(\lambda_i,\epsilon_i)$-quasigeodesic
for  $\lambda_i = N |\tilde{u}_i|$, $\epsilon_i = 2N^2 |\tilde{u}_i|^2  + 2N |\tilde{u}_i|$.
If $|\tilde{u}_i| < 2L+1$ then $\tilde{u}_i^n$ is 
$(\lambda, \epsilon)$-quasigeodesic for the constants $\lambda$ and $\epsilon$ defined
at the beginning of the proof.
We then replace $u_i^{x_i}$ by $u_{i,1} \tilde{u}_i^{x_i} u_{i,1}^{-1}$.
Note that for every $n \geq 0$, $u_{i,1} \tilde{u}_i^n u_{i,1}^{-1} = u_{i,1} (u_{i,2} u_{i,1})^n u_{i,1}^{-1} = 
(u_{i,1} u_{i,2})^n = u_i^n$ in $G$.

Now assume that $|\tilde{u}_i| \ge 2L+1$.
By Lemma~\ref{lemma-Ep-Ho1}, $\tilde{u}_i^n$ is $L$-local $(1,2\delta)$-quasi\-geo\-desic for every $n \geq 0$.
By Lemma~\ref{lemma-Ep-Ho2}, one can compute
$c_i \in \mathcal{B}_{4\delta}(1)$ and an integer $1 \leq m_i \leq K$ such that $(\shlex(c_i^{-1} \tilde{u}_i^{m_i} c_i))^n$
is geodesic (and hence $(1,0)$-quasigeodesic) for all $n \geq 0$. 
We then produce for every number $0 \leq d_i \leq m_i-1$ a new knapsack instance
by replacing $u_i^{x_i}$ by  $u_{i,1} \tilde{u}_i^{d_i} c_i (\shlex(c_i^{-1} \tilde{u}_i^{m_i} c_i))^{x_i} c_i^{-1} u_{i,1}^{-1}$. 
To make the description of the resulting knapsack expression more uniform we set $m_i = 1$ and $c_i = 1$
in case $|\tilde{u}_i| < 2L+1$. Then, the replacement of $u_i^{x_i}$ by $u_{i,1} \tilde{u}_i^{x_i} u_{i,1}^{-1}$
in case $|\tilde{u}_i| < 2L+1$ is the same as the one for the case $|\tilde{u}_i| \ge 2L+1$.
Let $m :  \{x_1, \ldots, x_k\} \to \N$ be the mapping with $m(x_i) = m_i$.

From the above discussion,
we obtain a finite set of 
$(\lambda,\epsilon)$-quasigeodesic knapsack 
expressions $E_d$ that are parameterized by a mapping $d : \{x_1, \ldots, x_k\} \to \N$ with
$0 \leq d(x_i) < m_i$ for all $1 \le i \le k$.
Let $\mathcal{D}$ be the set of all such mappings. We then have
$$
\Sol(E) = \bigcup_{d \in \mathcal{D}} (m \cdot \Sol(E_d) + d).
$$
Note that the magnitude of every $E_d$ is bounded linearly in the magnitude of $E$.

Finally, the statement of the proposition is directly obtained by combining the above steps 1 and 2.
\qed

\subsection{Proof of Theorem~\ref{thm-hyperbolic-semilinear}}

We now come to the proof of Theorem~\ref{thm-hyperbolic-semilinear}.
Consider a knapsack expression $E =  u_1^{x_1} v_1 u_2^{x_2} v_2 \cdots u_{k}^{x_{k}} v_k$.
We can assume that all $u_i, v_i$ are geodesic.
By Proposition~\ref{prop-geodesic-knapsack} we can moreover assume that for all $1 \leq i \leq k$,
$u_i$ represents a group element of infinite order and that $u_i^n$ is 
$(\lambda,\epsilon)$-quasigeodesic for all $n \ge 0$, where
$\lambda,\epsilon$ are fixed constants.
We want to show that $\Sol(E)$ is semilinear and has a magnitude that is polynomially
bounded by $|E|$.

For the case $k=1$ we have to consider all natural numbers
$n$ with $u_1^n = v_1^{-1}$ in $G$. Since $u_1$ represents a group element of 
infinite order there is at most one such $n$. Moreover, since $u_i^n$ is 
$(\lambda,\epsilon)$-quasigeodesic, such an $n$ has to satisfy
$|u_1| \cdot n \le \lambda |v_1| + \epsilon$, which yields
a linear bound on $n$.  

For the case $k=2$ we can directly use Proposition~\ref{lemma-k=2}.
Now assume that $k \geq 3$. 
We want to show that the set $\Sol(E)$ is a semilinear subset of $\mathbb{N}^k$
(later we will consider the magnitude of $\Sol(E)$).
For this we construct a Presburger formula with free variables $x_1, \ldots, x_k$
that is equivalent to $E=1$. We do this by induction on the depth $k$. Therefore, we can use 
in our Presburger formula also knapsack equations of the form $F=1$, where $F$ has depth
at most $k-1$.

It suffices to construct a Presburger formula for $\Sol(E) \cap (\mathbb{N} \setminus \{0\})^k$.
Note that $E=1$ is equivalent to $\bigvee_{I \subseteq \{1,\ldots,k\}} (E_I=1 \wedge \bigwedge_{i \in I} x_i > 0)$,
where $E_I$ is obtained from $E$ by removing for every $i \not\in I$ the power $u_i^{x_i}$.

Consider a tuple $(n_1, \ldots, n_k) \in \Sol(E) \cap (\mathbb{N} \setminus \{0\})^k$
and the corresponding $2k$-gon that is defined by the $(\lambda,\epsilon)$-quasigeodesic paths
$P_i = (u_1^{n_1} v_1 \cdots u_{i-1}^{n_{i-1}} v_{i-1}) \cdot P[u_i^{n_i}]$ and the geodesic paths
$Q_i = (u_1^{n_1} v_1 \cdots u_{i}^{n_{i}}) \cdot P[v_i]$, see Figure~\ref{polygon} for the case $k=3$.
Since all paths $P_i$ and $Q_i$ are $(\lambda,\epsilon)$-quasigeodesic,
we can apply \cite[Lemma 6.4]{MyNiUs14}: Every side of the $2k$-gon is contained in the 
$h$-neighborhoods of the other sides, where $h = \xi+\xi\log(2k)$ for a constant $\xi$ that only depends
on the constants $\delta, \lambda, \varepsilon$.

\newlength{\R}\setlength{\R}{2.7cm}
\begin{figure}[t]
\centering
\begin{tikzpicture}
  [inner sep=.5mm,
  minicirc/.style={circle,draw=black,fill=black,thick}]

  \node (circ1) at ( 45:\R) [minicirc] {};
  \node (circ2) at (135:\R) [minicirc] {};
  \node (circ3) at (165:\R) [minicirc] {};
  \node (circ4) at (255:\R) [minicirc] {};
  \node (circ5) at (285:\R) [minicirc] {};
  \node (circ6) at (15:\R) [minicirc] {};
  \draw [thick] (circ1) to [out=225, in=-45] node[above=1mm] {$u_2^{n_2}$} 
                      (circ2) to [out=285, in=15] node[above=0.5mm,left=0.5mm] {$v_1$} 
                      (circ3) to [out=345, in=75] node[below=0.5mm,left=0.5mm] {$u_1^{n_1}$} 
                      (circ4) to [out=45, in=135] node[below=1mm] {$v_3$} 
                      (circ5) to [out=105, in=195] node[below=0.5mm,right=0.5mm] {$u_3^{n_3}$} 
                      (circ6) to [out=165, in=255] node[right=1mm] {$v_2$} (circ1);
\end{tikzpicture}
\caption{\label{polygon} The $2k$-gon for $k = 3$ from the proof of Theorem~\ref{thm-hyperbolic-semilinear}}
\end{figure}
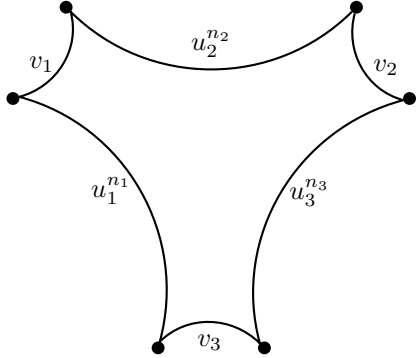

Let us now consider the side $P_2$ of the quasigeodesic $(2k)$-gon. It is labelled
with $u_2^{x_2}$. Its neighboring  sides are $Q_1$ and $Q_2$, which are labelled
with $v_1$ and $v_3$, respectively.
We distinguish several cases. In each case we cut the $2k$-gon into smaller 
pieces along paths of length $\leq 2h+1$ (length $h$ in some cases), and these smaller pieces will correspond to knapsack
expressions of depth $<k$. This is done until all knapsack expressions have depth at most two.
When we speak of a point on the $2k$-gon, we mean a node of the Cayley graph
(i.e., an element of the group $G$) and not a point in the interior of an edge.
Moreover, when we speak of the successor point of a point $p$, we refer to the clockwise order on the 
$2k$-gon, where the sides are traversed in the order $P_1,Q_1, \ldots, P_k,Q_k$.
We now distinguish the following cases:

\medskip
\noindent
{\em Case 1:} There is a point $p \in P_2$ that has distance at most $h$ from a point
$q$ that does not belong to $P_1 \cup Q_1 \cup Q_2 \cup P_3$. Thus $q$ must
belong to one of the paths $Q_3, P_4,  \ldots Q_{k-1},  P_k, Q_k$. Let $w$ be a geodesic
word of length at most $h$ that labels a path from $p$ to $q$.
There are two subcases:

\medskip
\noindent
{\em Case 1.1:} $q$ belongs to the paths $Q_i$, where $3 \le i \le k$.  
The situation is shown in Figure~\ref{polygon1.1}. 
We construct two new knapsack expressions $F_t$ and $G_t$ for all tuples $t = (w,u_{2,1},u_{2,2},v_{i,1},v_{i,2})$
such that $w \in \Sigma^*$ is of length at most $h$, $u_2 = u_{2,1} u_{2,2}$ and $v_i = v_{i,1} v_{i,2}$:
\begin{eqnarray*}
F_t &=& u_1^{x_1} v_1 u_2^{y_2} (u_{2,1} w v_{i,2}) u_{i+1}^{x_{i+1}} v_{i+1} \cdots u_k^{x_k} v_k \ \text{ and } \\
G_t  &=& u_{2,2} u_2^{z_2} v_2 u_3^{x_3} v_3 \cdots u_i^{x_i} (v_{i,1} w^{-1})
\end{eqnarray*}
Here $y_2$ and $z_2$ are new variables.
Note that $F_t$ and $G_t$ have depth at most $k-1$. 
Moreover, let $A_{1.1}$ be the following formula, where $t$ ranges over all tuples of the above form:
\[
A_{1.1} =  \bigvee_t \exists y_2,z_2: x_2 = y_2 + 1 + z_2 \wedge F_t = 1 \wedge G_t = 1
\]

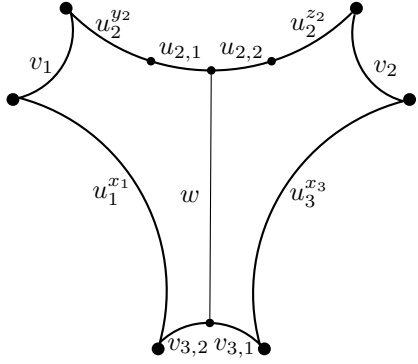
\begin{figure}[t]
\centering
\begin{tikzpicture}
  [inner sep=.5mm,
  minicirc/.style={circle,draw=black,fill=black,thick}]
  \tikzstyle{small} = [circle,draw=black,fill=black,inner sep=.3mm]
  \node (circ1) at ( 45:\R) [minicirc] {};
  \node (circ2) at (135:\R) [minicirc] {};
  \node (circ3) at (165:\R) [minicirc] {};
  \node (circ4) at (255:\R) [minicirc] {};
  \node (circ5) at (285:\R) [minicirc] {};
  \node (circ6) at (15:\R) [minicirc] {};
  \draw [thick] (circ1) to [out=225, in=-45] node[pos=.3,small] {} node[pos=.5,small]  (a) {} node[pos=.7,small] {} 
     node[above=0.2mm,pos=.4] {$u_{2,2}$} node[above=0.2mm,pos=.6] {$u_{2,1}$} node[pos=0.05,above=1mm,left=1mm] {$u_2^{z_2}$}
     node[pos=0.95,above=1mm,right=1mm] {$u_2^{y_2}$}
                      (circ2) to [out=285, in=15] node[above=0.5mm,left=0.5mm] {$v_1$} 
                      (circ3) to [out=345, in=75] node[below=0.5mm,left=0.5mm] {$u_1^{x_1}$} 
                      (circ4) to [out=45, in=135] node[below=1mm,pos=.3] {$v_{3,2}$} node[below=1mm,pos=.7] {$\; v_{3,1}$}  node[pos=.5,small]  (b){}
                      (circ5) to [out=105, in=195] node[below=0.5mm,right=0.5mm] {$u_3^{x_3}$} 
                      (circ6) to [out=165, in=255] node[right=1mm] {$v_2$} (circ1);
     \draw (a) to node[left=.8mm] {$w$} (b);
\end{tikzpicture}
\caption{\label{polygon1.1} Case~1.1 from the proof of Theorem~\ref{thm-hyperbolic-semilinear}}
\end{figure}

\noindent
{\em Case 1.2:} $q$ belongs to the path $P_i$, where $4 \le i \le k$
(this case can only occur if $k \ge 4$).
This case is analogous to Case 1.1. We only have to split $u_i^{x_i}$ as $u_i^{y_i} (u_{i,1} u_{i,2}) u_i^{z_i}$ 
(as we do for $u_2^{x_2}$).
We construct two new knapsack expressions $F_t$ and $G_t$ for all tuples $t = (w,u_{2,1},u_{2,2},u_{i,1},u_{i,2})$
such that $w \in \Sigma^*$ is of length at most $h$, $u_2 = u_{2,1} u_{2,2}$ and $u_i = u_{i,1} u_{i,2}$:
\begin{eqnarray*}
F_t &=& u_1^{x_1} v_1 u_2^{y_2} (u_{2,1} w u_{i,2}) u_i^{z_i} v_i  u_{i+1}^{x_{i+1}} v_{i+1} \cdots u_k^{x_k} v_k \ \text{ and } \\
G_t  &=& u_{2,2} u_2^{z_2} v_2 u_3^{x_3} v_3 \cdots u_{i-1}^{x_{i-1}} v_{i-1} u_i^{y_i} (u_{i,1} w^{-1})
\end{eqnarray*}
Here $y_2,z_2,y_i,z_i$ are new variables.
Note that $F_t$ and $G_t$ have depth at most $k-1$. 
Moreover, let $A_{1.2}$ be the following formula, where $t$ ranges over all tuples of the above form:
\[
A_{1.2} =  \bigvee_t \exists y_2,z_2,y_{i},z_{i} : x_2 = y_2 + 1 + z_2 \wedge x_{i}  = y_{i} + 1 + z_i \wedge F_t = 1 \wedge G_t = 1
\]
{\em Case 2:} Every point on $P_2$ that has distance at most $h$ from a point
on $P_1 \cup Q_1 \cup Q_2 \cup P_3$.

\medskip
\noindent
{\em Case 2.1:} The end point of $P_2$ (i.e., the point connecting $P_2$ with $Q_2$) has distance at most $h$
from a point on $Q_1$, see Figure~\ref{polygon2.1}.
For all tuples $t = (w,v_{1,1},v_{1,2})$
such that $w \in \Sigma^*$ is of length at most $h$ and $v_1 = v_{1,1} v_{1,2}$
we construct two new knapsack expressions 
\begin{eqnarray*}
F_t =  u_2^{x_2} (w v_{1,2}) \ \text{ and } \
G_t  =  u_1^{x_1} (v_{1,1} w^{-1} v_2) u_3^{x_3} v_3 \cdots u_k^{x_k} v_k
\end{eqnarray*}
and the formula
\[
A_{2.1} =  \bigvee_t  F_t = 1 \wedge G_t = 1, 
\]
where $t$ ranges over all tuples of the above form.
Note that $F_t$  has depth one and $G_t$ has depth $k-1$.

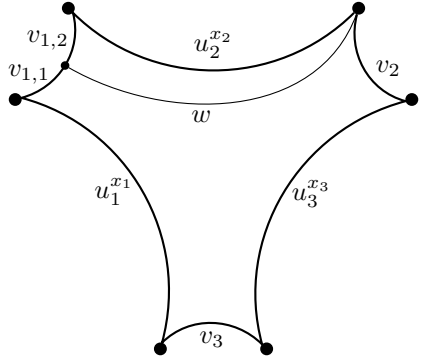
\begin{figure}[t]
\centering
\begin{tikzpicture}
  [inner sep=.5mm,
  minicirc/.style={circle,draw=black,fill=black,thick}]
  \tikzstyle{small} = [circle,draw=black,fill=black,inner sep=.3mm]
  \node (circ1) at ( 45:\R) [minicirc] {};
  \node (circ2) at (135:\R) [minicirc] {};
  \node (circ3) at (165:\R) [minicirc] {};
  \node (circ4) at (255:\R) [minicirc] {};
  \node (circ5) at (285:\R) [minicirc] {};
  \node (circ6) at (15:\R) [minicirc] {};
  \draw [thick] (circ1) to [out=225, in=-45]  node[above=1mm] {$u_2^{x_2}$} 
                      (circ2) to [out=285, in=15] node[left=.1mm,pos=.25] {$v_{1,2}$} node[above=.5mm,left=.5mm,pos=.6] {$v_{1,1}$}  node[pos=.5,small]  (b){}
                      (circ3) to [out=345, in=75] node[below=0.5mm,left=0.5mm] {$u_1^{x_1}$} 
                      (circ4) to [out=45, in=135] node[below=0.5mm] {$v_3$}
                      (circ5) to [out=105, in=195] node[below=0.5mm,right=0.5mm] {$u_3^{x_3}$} 
                      (circ6) to [out=165, in=255] node[right=1mm] {$v_2$}
                      (circ1);
     \draw (circ1) to [out=-110, in=-30] node[below=.3mm,pos=.6] {$w$} (b);
\end{tikzpicture}
\caption{\label{polygon2.1} Case~2.1 from the proof of Theorem~\ref{thm-hyperbolic-semilinear}}
\end{figure}

\medskip
\noindent
{\em Case 2.2:} The end point of $P_2$ (i.e., the point connecting $P_2$ with $Q_2$) has distance at most $h$
from a point on $P_1$, see Figure~\ref{polygon2.2}.
For all tuples $t = (w, u_{1,1},u_{1,2})$
such that $w \in \Sigma^*$ is of length at most $h$ and $u_1 = u_{1,1} u_{1,2}$,
we construct two new knapsack expressions
\begin{eqnarray*}
F_t =  u_1^{z_1} v_1 u_2^{x_2} (w u_{1,2}) \ \text{ and } \
G_t  =  u_1^{y_1} (u_{1,1} w^{-1} v_2) u_3^{x_3} v_3 \cdots u_k^{x_k} v_k
\end{eqnarray*}
and the formula
\[
A_{2.2} =  \bigvee_t \exists y_1,z_1 : x_1 = y_1 + 1 + z_1 \wedge F_t = 1 \wedge G_t = 1,
\]
where  $t$ ranges over all tuples of the above form.
Note that $F_t$  has depth two and $G_t$ has depth $k-1$.

\begin{figure}[t]
\centering
\begin{tikzpicture}
  [inner sep=.5mm,
  minicirc/.style={circle,draw=black,fill=black,thick}]
  \tikzstyle{small} = [circle,draw=black,fill=black,inner sep=.3mm]
  \node (circ1) at ( 45:\R) [minicirc] {};
  \node (circ2) at (135:\R) [minicirc] {};
  \node (circ3) at (165:\R) [minicirc] {};
  \node (circ4) at (255:\R) [minicirc] {};
  \node (circ5) at (285:\R) [minicirc] {};
  \node (circ6) at (15:\R) [minicirc] {};
  \draw [thick] (circ1) to [out=225, in=-45]  node[above=1mm] {$u_2^{x_2}$} 
                      (circ2) to [out=285, in=15] node[above=0.5mm,left=0.5mm] {$v_1$} 
                      (circ3) to [out=345, in=75] node[pos=.3,small] {} node[pos=.5,small]  (a) {} node[pos=.7,small] {} 
     node[left=-.3mm,pos=.62] {$u_{1,1}$} node[left=0mm,pos=.42] {$u_{1,2}$} node[pos=0.25,left=1mm] {$u_1^{z_1}$}
     node[pos=0.85,left=-.5mm] {$u_1^{y_1}$}
                      (circ4) to [out=45, in=135] node[below=0.5mm] {$v_3$}
                      (circ5) to [out=105, in=195] node[below=0.5mm,right=0.5mm] {$u_3^{x_3}$} 
                      (circ6) to [out=165, in=255] node[right=1mm] {$v_2$}
                      (circ1);
     \draw (circ1) to [out=-110, in=30] node[above=.3mm,pos=.6] {$w$} (a);
\end{tikzpicture}
\caption{\label{polygon2.2} Case~2.2 from the proof of Theorem~\ref{thm-hyperbolic-semilinear}}
\end{figure}

\medskip
\noindent
If on the other hand the end point of $P_2$ has distance $> h$ from all points on $P_1 \cup Q_1$, 
then there must be two points $p_1, p_2$  on $P_2$ such that $p_2$ is the successor point of $p_1$ when travelling
along $P_2$ (i.e., $d(p_1, p_2) = 1$), and $p_1$ has distance at most $h$ from a point $q_1 \in P_1 \cup Q_1$, while
$p_2$ has distance at most $h$ from a point on $q_2 \in Q_2 \cup P_3$. 
Thus, the distance between $q_1$ and $q_2$ is at most $2h+1$.
Let $w$ be a word that labels a geodesic path from $q_1$ to $q_2$ (thus, $|w| \le 2h+1$).
This leads to the following four subcases.

\medskip
\noindent
{\em Case 2.3:} $q_1 \in Q_1$ and $q_2 \in Q_2$, see Figure~\ref{polygon2.3}.
This case is very similar to Case~2.1. For every tuple $t = (w, v_{1,1}, v_{1,2}, v_{2,1}, v_{2,2})$
with $|w| \le 2h+1$, $v_1 = v_{1,1} v_{1,2}$ and $v_2 = v_{2,1} v_{2,2}$ 
we obtain two new knapsack expressions
\begin{eqnarray*}
F_t = F_t = v_{1,2} u_2^{x_2} (v_{2,1} w) \ \text{ and } \
G_t  =  u_1^{x_1} (v_{1,1} w^{-1} v_{2,2}) u_3^{x_3} v_3 \cdots u_k^{x_k} v_k 
\end{eqnarray*}
and the formula
\[
A_{2.3} = \bigvee_t  F_t = 1 \wedge G_t = 1,
\]
where $t$ ranges over all tuples of the above form.

\begin{figure}[t]
\centering
\begin{tikzpicture}
  [inner sep=.5mm,
  minicirc/.style={circle,draw=black,fill=black,thick}]
  \tikzstyle{small} = [circle,draw=black,fill=black,inner sep=.3mm]
  \node (circ1) at ( 45:\R) [minicirc] {};
  \node (circ2) at (135:\R) [minicirc] {};
  \node (circ3) at (165:\R) [minicirc] {};
  \node (circ4) at (255:\R) [minicirc] {};
  \node (circ5) at (285:\R) [minicirc] {};
  \node (circ6) at (15:\R) [minicirc] {};
  \draw [thick] (circ1) to [out=225, in=-45] node[above=1mm] {$u_2^{x_2}$} 
                      (circ2) to [out=285, in=15] node[left=.1mm,pos=.25] {$v_{1,2}$} node[above=.5mm,left=.5mm,pos=.6] {$v_{1,1}$}  node[pos=.5,small]  (b1){}
                      (circ3) to [out=345, in=75] node[below=0.5mm,left=0.5mm] {$u_1^{x_1}$} 
                      (circ4) to [out=45, in=135] node[below=0.5mm] {$v_3$}
                      (circ5) to [out=105, in=195] node[below=0.5mm,right=0.5mm] {$u_3^{x_3}$} 
                      (circ6) to [out=165, in=255] node[right=.1mm,pos=.35] {$v_{2,2}$} node[above=.5mm,right=0mm,pos=.75] {$v_{2,1}$}  node[pos=.5,small]  (b2){}
                      (circ1);
     \draw (b1) to [out=-30, in=210] node[below=.3mm,pos=.6] {$w$} (b2);
\end{tikzpicture}
\caption{\label{polygon2.3} Case~2.3 from the proof of Theorem~\ref{thm-hyperbolic-semilinear}}
\end{figure}

\medskip
\noindent
{\em Case 2.4:} $q_1 \in P_1$ and $q_2 \in Q_2$, see Figure~\ref{polygon2.4}.
This case is very similar to Case~2.2. 
For every tuple $t = (w, u_{1,1},u_{1,2}, v_{2,1}, v_{2,2})$
such that $|w| \le 2h+1$, $u_1 = u_{1,1} u_{1,2}$, and $v_2 = v_{2,1} v_{2,2}$ 
we obtain two new knapsack expressions
\begin{eqnarray*}
F_t = u_{1,2} u_1^{z_1} v_1 u_2^{x_2} (v_{2,1} w) \ \text{ and } \
G_t  =  u_1^{y_1} (u_{1,1} w^{-1} v_{2,2}) u_3^{x_3} v_3 \cdots u_k^{x_k} v_k 
\end{eqnarray*}
and the formula
\[
A_{2.4} =  \bigvee_t \exists y_1,z_1 : x_1 = y_1 + 1 + z_1 \wedge F_t = 1 \wedge G_t = 1,
\]
where  $t$ ranges over all tuples of the above form.

\begin{figure}[t]
\centering
\begin{tikzpicture}
  [inner sep=.5mm,
  minicirc/.style={circle,draw=black,fill=black,thick}]
  \tikzstyle{small} = [circle,draw=black,fill=black,inner sep=.3mm]
  \node (circ1) at ( 45:\R) [minicirc] {};
  \node (circ2) at (135:\R) [minicirc] {};
  \node (circ3) at (165:\R) [minicirc] {};
  \node (circ4) at (255:\R) [minicirc] {};
  \node (circ5) at (285:\R) [minicirc] {};
  \node (circ6) at (15:\R) [minicirc] {};
  \draw [thick] (circ1) to [out=225, in=-45] node[above=1mm] {$u_2^{x_2}$} 
                      (circ2) to [out=285, in=15] node[above=0.5mm,left=0.5mm] {$v_1$} 
                      (circ3) to [out=345, in=75] node[pos=.3,small] {} node[pos=.5,small]  (b1) {} node[pos=.7,small] {} 
     node[left=-.3mm,pos=.62] {$u_{1,1}$} node[left=0mm,pos=.42] {$u_{1,2}$} node[pos=0.25,left=1mm] {$u_1^{z_1}$}
     node[pos=0.85,left=-.5mm] {$u_1^{y_1}$}
                      (circ4) to [out=45, in=135] node[below=0.5mm] {$v_3$}
                      (circ5) to [out=105, in=195] node[below=0.5mm,right=0.5mm] {$u_3^{x_3}$} 
                      (circ6) to [out=165, in=255] node[right=.1mm,pos=.35] {$v_{2,2}$} node[above=.5mm,right=0mm,pos=.75] {$v_{2,1}$}  node[pos=.5,small]  (b2){}
                      (circ1);
     \draw (b1) to [out=30, in=210] node[below=.3mm,pos=.6] {$w$} (b2);
\end{tikzpicture}
\caption{\label{polygon2.4} Case~2.4 from the proof of Theorem~\ref{thm-hyperbolic-semilinear}}
\end{figure}

\medskip
\noindent
{\em Case 2.5:} $q_1 \in Q_1$ and $q_2 \in P_3$.
This case is analogous to Case~2.4.

\medskip
\noindent
{\em Case 2.6:} $q_1 \in P_1$ and $q_2 \in P_3$, see Figure~\ref{polygon2.6}.
For every tuple $$(w_1, w_2, w, u_{1,1}, u_{1,2}, u_{2,1}, u_{2,2}, u_{3,1}, u_{3,2})$$
such that $|w| \leq 2k+1$, $|w_1| \leq h$, $|w_2| \leq h+1$, $w = w_1^{-1} w_2$ in $G$,
$u_1 =  u_{1,1} u_{1,2}$, $u_2 = u_{2,1} u_{2,2}$, and $u_3 = u_{3,1} u_{3,2}$
we obtain three new knapsack expressions
\begin{eqnarray*}
F_t & = & u_1^{z_1} v_1 u_2^{y_2} (u_{2,1} w_1 u_{1,2}), \\
G_t  & = & u_2^{z_2} v_2 u_3^{y_3} (u_{3,1} w_2^{-1} u_{2,2}) \ \text{ and } \\
H_t & = &  u_3^{z_3} v_3 u_4^{x_4} v_4 \cdots u_k^{x_k} v_k u_1^{y_1} (u_{1,1} w u_{3,2}).
\end{eqnarray*}
and the formula
\[
A_{2.6} = \bigwedge_{t} \exists y_1, z_1, y_2, z_2, y_3, z_3 :  \bigwedge_{i=1}^3 x_i = y_i + 1 + z_i \wedge  F_t = 1 \wedge G_t = 1 \wedge H_t = 1,
\]
where $t$ ranges over all tuples of the above form.
Note that $F_t$ and $G_t$ have depth $2$ and that $H_t$ has depth $k-1$.
 
\begin{figure}[t]
\centering
\begin{tikzpicture}
  [inner sep=.5mm,
  minicirc/.style={circle,draw=black,fill=black,thick}]
  \tikzstyle{small} = [circle,draw=black,fill=black,inner sep=.3mm]
  \node (circ1) at ( 45:\R) [minicirc] {};
  \node (circ2) at (135:\R) [minicirc] {};
  \node (circ3) at (165:\R) [minicirc] {};
  \node (circ4) at (255:\R) [minicirc] {};
  \node (circ5) at (285:\R) [minicirc] {};
  \node (circ6) at (15:\R) [minicirc] {};
  \draw [thick] (circ1) to [out=225, in=-45] node[pos=.3,small] {} node[pos=.5,small]  (a) {} node[pos=.7,small] {} 
     node[above=0.2mm,pos=.4] {$u_{2,2}$} node[above=0.2mm,pos=.6] {$u_{2,1}$} node[pos=0.05,above=1mm,left=1mm] {$u_2^{z_2}$}
     node[pos=0.95,above=1mm,right=1mm] {$u_2^{y_2}$}
                      (circ2) to [out=285, in=15] node[above=0.5mm,left=0.5mm] {$v_1$} 
                      (circ3) to [out=345, in=75] node[pos=.3,small] {} node[pos=.5,small]  (b1) {} node[pos=.7,small] {} 
     node[left=-.3mm,pos=.62] {$u_{1,1}$} node[left=0mm,pos=.42] {$u_{1,2}$} node[pos=0.25,left=1mm] {$u_1^{z_1}$}
     node[pos=0.85,left=-.5mm] {$u_1^{y_1}$}
                      (circ4) to [out=45, in=135] node[below=0.5mm] {$v_3$}
                      (circ5) to [out=105, in=195] node[pos=.3,small] {} node[pos=.5,small]  (b2) {} node[pos=.7,small] {} 
     node[right=0.2mm,pos=.38] {$u_{3,2}$} node[right=0.5mm,pos=.58] {$u_{3,1}$} node[pos=0.15,right=0mm] {$u_3^{z_3}$}
     node[pos=0.78,below=1mm,right=1mm] {$u_3^{y_3}$}
                      (circ6) to [out=165, in=255] node[right=1mm] {$v_2$}
                      (circ1);
        \draw (a) to [out=-90, in=30] node[left=.5mm,pos=.45] {$w_1$} (b1);
        \draw (a) to [out=-90, in=150] node[right=.5mm,pos=.45] {$w_2$} (b2);
        \draw (b1) to [out=30, in=150] node[below=.5mm,pos=.5] {$w$} (b2);
\end{tikzpicture}
\caption{\label{polygon2.6} Case~2.6 from the proof of Theorem~\ref{thm-hyperbolic-semilinear}}
\end{figure}

\medskip
\noindent
 This concludes the construction of a Presburger formula for the set $\Sol(E)$ and shows the semilinearity of $\Sol(E)$.
 It remains to argue that the magnitude of $\Sol(E)$ is bounded polynomially in $|E|$. 
 Iterating the above splitting procedure results in an exponentially large disjunction of conjunctive formulas 
 of the form
 \begin{equation} \label{eq-final-conjunction}
\exists y_1, \ldots, y_m 
\bigwedge_{i \in I} E_i = 1 \bigwedge_{j \in J} z_j = z'_j + z''_j + 1
\end{equation}
 where every $E_i$ is a knapsack expression of depth at most two. Moreover, for $i \neq j$, $E_i$ and $E_j$ have no common variables.
 The existentially quantified variables $y_1, \ldots, y_m$ are the new variables that were introduced when splitting
 factors $u_i^{x_i}$ (e.g., $y_2, z_2$ in the formula $A_{1.1}$). The variables $z_j, z'_j, z''_j$ in  \eqref{eq-final-conjunction}
 are from $\{ x_1, \ldots, x_k, y_1, \ldots, y_m\}$.
 The equations $z_j = z'_j + z''_j + 1$ in \eqref{eq-final-conjunction}
 result from the splitting of factors $u_i^{x_i}$. For instance, $x_2 = y_2 +1 + z_2$ in $A_{1.1}$ is one 
 such equation. 
 
  In order to bound the magnitude of $\Sol(E)$ it suffices to consider a single conjunctive formula 
 of the form \eqref{eq-final-conjunction}, since disjunction corresponds to union of semilinear sets, which
 does not increase the magnitude. We can also ignore the existential quantifiers in \eqref{eq-final-conjunction},
 because existential quantification corresponds to projection onto some of the coordinates, which cannot increase
 the magnitude. 
 Hence, we have to consider the magnitude of the semilinear set $A$ defined
 by
  \begin{equation} \label{eq-final-conjunction'}
\bigwedge_{i \in I} E_i = 1 \bigwedge_{j \in J} z_j = z'_j + z''_j + 1 .
\end{equation}
 The splitting process that finally produces formula \eqref{eq-final-conjunction'} can be seen
 as a tree $T$, where every node $v$ is labelled with a knapsack expression $E(v)$, the root is labelled with $E$, the leaves
 are labelled with the expressions $E_i$ ($i \in I$) from \eqref{eq-final-conjunction} and the children of a node $v$ are labelled with 
 the expressions into which $E(v)$ is decomposed. The number of children of every node is at most three (three children are only produced
 in Case~2.6).

 Let us first show that the size of this tree $T$ is bounded by $\mathcal{O}(k^2)$. 
 We assign to each node $v$ of $T$ the number $d(v) :=$ depth of the knapsack expression $E(v)$.
 Note that $d(v) \leq 2$ if and only if $v$ is a leaf.  If $E(v)$ is split according to one of the Cases 2.1--2.6 then
 $v$ has $j \leq 3$ children $v_1, \ldots, v_j$, where $v_1, \ldots, v_{j-1}$ are leaves (their $d$-value is one or two) and 
 $d(v_j) = d(v)-1$. 
 If $E(v)$ is split according to Case~1.1 or 1.2 then
 $v$ has two children $v_1$ and $v_2$ such that (i) $d(v_1), d(v_2) < d(v)$, 
 (ii) $d(v_1), d(v_2) \ge 2$ and $d(v_1) + d(v_2) = d(v)+1$ in Case~1.1,
 and (iii) $d(v_1), d(v_2) \ge 3$ and $d(v_1) + d(v_2) = d(v)+2$ in Case~1.2. 
 Let $T'$ be the tree that is obtained by removing all leaves with $d$-value at most $2$.
 It suffices to show that the size of $T'$ is bounded by $\mathcal{O}(k^2)$. 
 All leaves of $T'$ have the $d$-value $3$. Moreover, every non-leaf $v$ of $T'$ has either exactly
 one child $v'$ with $d(v) > d(v')$ or two children $v_1$ and $v_2$ such that
 $d(v) \geq d(v_1) + d(v_2) - 2$. Let $n_0$ be the number of leaves of $T'$ and $n_2$ be the number of nodes of $T'$
 with exactly two children. 
 From the above equations, it follows that the root $r$ of $T'$
 satisfies $k = d(r) \ge 3 n_0 - 2 n_2$. Moreover, $n_2 = n_0 - 1$.
 We get $k \ge n_0 + 2$, i.e., $n_0 \le k-2$ and $n_2 \le k-3$.
 Since every path from the root
 to a leaf can contain at most $k$ nodes having a single child,
 we must have $n_1 \le  (k-2)k$.
 This shows that the size of $T'$ and hence of $T$ is bounded by
 $\mathcal{O}(k^2)$. Thus, we also have $|I| \leq \mathcal{O}(k^2)$ in   \eqref{eq-final-conjunction'}.
 
Next, we show that for every $i \in I$, $|E_i|$ is bounded polynomially in $|E|$. To see this, consider
 a single splitting step. In each of the above Cases~1.1--2.6 the argument is similar. Consider for instance
 Case 2.6, where the knapsack expression $E$ is replaced by three knapsack expressions $F_t, G_t, H_t$.
 We can bound the sizes of these expressions by $|F_t| \le |E| + |u_{1,2}| + |u_{2,1}| + |w_1| \le |E| + |u_1| + |u_2| + h$,
 $|G_t| \le |E| + |u_{2,2}| + |u_{3,1}| + |w_2| \le |E| + |u_{2}| + |u_{3}| + h+1$, and
 $H_t| \le |E| + |u_{1,1}| + |u_{3,2}| + |w| \le |E| + |u_{1}| + |u_{3}| + 2h+1$.
 The number of splitting steps
 that finally leads to an $E_i$ is bounded by $k$ (since the depth of the knapsack expressions
 is reduced in each step).
 Hence, the size of each knapsack expression $E_i$ in \eqref{eq-final-conjunction'} is bounded by
 $|E| + 2k|E| + k(2h+1) = (2k+1) |E| + k (2\xi+2\xi\log(2k)+1) \le \mathcal{O}(|E|^2)$.
 Since every $E_i$ has depth at most two, there is a fixed polynomial $p(n)$ such that
  the magnitude of every set $\Sol(E_i)$ is bounded by $p(|E|)$. Hence, also $\bigoplus_{i \in I} \Sol(E_i)$
  is a semilinear set of magnitude at most $p(|E|)$ (the $\oplus$-operator on semilinear sets does
  not increase the magnitude). Note that $\bigoplus_{i \in I} \Sol(E_i)$ is the semilinear set defined by
  the conjunction $\bigwedge_{i \in I} E_i = 1$.
    
  To bound the magnitude of the semilinear set $A$ defined by  \eqref{eq-final-conjunction'}, one has to consider
  also the additional equations $z_j = z'_j + z''_j + 1$ for $j \in J$. Let $U$ be the set of variables that appear
  in the  knapsack expressions $E_i$ ($i \in I$). 
  Note that the dimension of $\bigoplus_{i \in I} \Sol(E_i)$
  is $|U|$. Since every  knapsack expression $E_i$ ($i \in I$) contains at most
  two variables, we can bound the dimension of $\bigoplus_{i \in I} \Sol(E_i)$ by $2 |I| \leq \mathcal{O}(k^2)$.
  Note that for each equation $z_j = z'_j + z''_j + 1$ there exists a node $v$ in the tree $T$ with children $v', v''$ such
  that $z_j$ is a variable from $E(v)$, $z'_j$ is a variable from $E(v')$, and $z''_j$ is a variable from $E(v'')$.
  This implies that every variable $z_j$ is a sum of pairwise different variables from $U$ plus a constant that is bounded
  by $|T| \le \mathcal{O}(k^2)$. 
  Therefore  the magnitude of $A$ is bounded by $\mathcal{O}(k^2 \cdot p(|E|))$, which is polynomial in $|E|$.
  This concludes the proof.
 \qed

\section{More groups with knapsack in LogCFL}
\label{sec-more-logcfl}

Let $\mathcal{C}$ be the smallest class of groups such that (i) every hyperbolic group belongs to $\mathcal{C}$,
(ii) if $G \in \mathcal{C}$ then also $G \times \Z \in \mathcal{C}$, and (iii) if $G, H \in \mathcal{C}$ then also
$G * H \in \mathcal{C}$ (where $G * H$ is the free product of $G$ and $H$).
The class $\mathcal{C}$ contains groups that are not hyperbolic (e.g., $\mathbb{Z} \times \mathbb{Z}$).
From Theorem~\ref{thm-hyperbolic-semilinear} and Proposition~\ref{prop-knapsack-tame} we get:

\begin{proposition} \label{prop-knapsack-tame-C}
Every group from the class $\mathcal{C}$ is knapsack-tame and hence polynomially knapsack-bounded.
\end{proposition}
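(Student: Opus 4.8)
The plan is to proceed by structural induction on the inductive definition of the class $\mathcal{C}$, with all the real work outsourced to the two results already established. The base case is Theorem~\ref{thm-hyperbolic-semilinear}: every hyperbolic group is knapsack-tame. For the inductive step, suppose $G \in \mathcal{C}$ was placed in $\mathcal{C}$ by one of the two closure rules, i.e.\ either $G = G' \times \mathbb{Z}$ for some $G' \in \mathcal{C}$, or $G = G' * H'$ for some $G', H' \in \mathcal{C}$. By the induction hypothesis $G'$ (and, in the second case, $H'$) is knapsack-tame, and Proposition~\ref{prop-knapsack-tame} states exactly that $G' \times \mathbb{Z}$ and $G' * H'$ are then knapsack-tame as well. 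Since $\mathcal{C}$ is by definition the \emph{smallest} class containing all hyperbolic groups and closed under direct products with $\mathbb{Z}$ and under free products, this induction shows that every group in $\mathcal{C}$ is knapsack-tame.

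For the second assertion, I would simply invoke the observation already recorded in Section~\ref{sec-more-logcfl}'s preceding material (in the discussion of knapsack-tame groups): knapsack-tameness implies polynomial knapsack-boundedness. Concretely, given a knapsack expression $E$ over a knapsack-tame group $G$, one computes a semilinear representation of $\Sol(E)$ whose magnitude is at most $p(|E|)$ for a fixed polynomial $p$; if $\Sol(E) \neq \emptyset$, then $\Sol(E)$ contains one of the base vectors $v_0$ occurring in that representation, and every entry of $v_0$ is bounded by $p(|E|)$. Hence $\Sol(E) \neq \emptyset$ iff there is a solution with all coordinates bounded by $p(|E|)$, which is precisely polynomial knapsack-boundedness.

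I do not expect a genuine obstacle here: all substantive content has been absorbed into Theorem~\ref{thm-hyperbolic-semilinear} and Proposition~\ref{prop-knapsack-tame}, and the remaining argument is a routine induction over an inductively defined class. The only points requiring a word of care are (a) that the ``smallest class'' formulation of $\mathcal{C}$ legitimately supports induction on the construction (standard), and (b) keeping track of the fact that the fixed polynomial witnessing tameness may change at each application of Proposition~\ref{prop-knapsack-tame} — but since $\mathcal{C}$ is generated by finitely many rules from a single base class and Proposition~\ref{prop-knapsack-tame} is uniform, each particular group $G \in \mathcal{C}$ inherits one fixed polynomial, which is all that is required.
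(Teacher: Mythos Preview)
Your proposal is correct and matches the paper's approach exactly: the paper simply says the proposition follows from Theorem~\ref{thm-hyperbolic-semilinear} and Proposition~\ref{prop-knapsack-tame}, and you have spelled out the (routine) structural induction that this citation implicitly stands for. The observation that knapsack-tame implies polynomially knapsack-bounded is likewise just a restatement of what the paper already recorded when defining these notions.
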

From Theorem~\ref{thm-hyp-OW-AuxPDA} and \ref{thm-free-directZ-OW-AuxPDA} we get:
\begin{proposition} \label{prop-OW-AuxPDA-C}
Every group from the class $\mathcal{C}$ belongs to OW-AuxPDA.
\end{proposition}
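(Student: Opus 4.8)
The plan is a routine structural induction following the inductive definition of the class $\mathcal{C}$. Recall that $\mathcal{C}$ is the smallest class of groups that (i) contains every hyperbolic group, (ii) is closed under $G \mapsto G \times \Z$, and (iii) is closed under free products $G, H \mapsto G * H$. To prove that every $G \in \mathcal{C}$ belongs to OW-AuxPDA, it suffices to check that the property ``belongs to OW-AuxPDA'' is satisfied by the base case and is preserved by the two closure operations, since $\mathcal{C}$ is defined as the smallest such class; membership of $G$ in $\mathcal{C}$ is witnessed by a finite derivation from the three rules, and one performs induction on the length (or structure) of that derivation.

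For the base case, every hyperbolic group belongs to OW-AuxPDA by Theorem~\ref{thm-hyp-OW-AuxPDA}. For the inductive step, suppose $G$ (and, in the free-product case, also $H$) already belongs to OW-AuxPDA. Then $G \times \Z$ and $G * H$ belong to OW-AuxPDA by Theorem~\ref{thm-free-directZ-OW-AuxPDA}. Combining the base case and the inductive step, every group obtainable by finitely many applications of the rules (i)--(iii) — that is, every group in $\mathcal{C}$ — belongs to OW-AuxPDA. Since OW-AuxPDA membership is independent of the chosen finite symmetric generating set (as noted in Section~\ref{sec-logcfl}), there is no issue with generating sets being passed through the constructions.

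There is essentially no genuine obstacle here: all the substantive work has already been done in Theorems~\ref{thm-hyp-OW-AuxPDA} and~\ref{thm-free-directZ-OW-AuxPDA}, and the proposition is a direct corollary obtained by iterating them along the syntax tree of the $\mathcal{C}$-derivation. The only point that deserves a sentence of care is making explicit that $\mathcal{C}$ being the \emph{smallest} class closed under (i)--(iii) is exactly what licenses the induction, i.e. that every element of $\mathcal{C}$ does come with a finite construction from hyperbolic groups using $\times \Z$ and $*$. One could phrase the whole argument in one line, but spelling out the induction keeps the paper self-contained in parallel with the analogous Proposition~\ref{prop-knapsack-tame-C}, which is proved from Theorem~\ref{thm-hyperbolic-semilinear} and Proposition~\ref{prop-knapsack-tame} in precisely the same way.
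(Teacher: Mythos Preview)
Your proposal is correct and matches the paper's approach exactly: the paper simply states that the proposition follows from Theorems~\ref{thm-hyp-OW-AuxPDA} and~\ref{thm-free-directZ-OW-AuxPDA}, which is precisely the structural induction you spell out. Your write-up is just a more detailed version of the one-line justification the paper gives.
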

Proposition~\ref{prop-knapsack-tame-C} and \ref{prop-OW-AuxPDA-C} together with 
Theorem~\ref{thm-acyclic-NFA} and \ref{thm-membership-to-knapsack} yield:

\begin{corollary}  \label{coro-final}
For every group $G$ from the class $\mathcal{C}$, membership for acyclic NFAs over $G$ and knapsack for $G$ 
both belong to $\LogCFL$.
\end{corollary}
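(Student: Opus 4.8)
The plan is to assemble the statement directly from the machinery developed in the preceding sections; essentially all of the real work has already been done, so this last step is a short bookkeeping argument that chains together two structural facts about the class $\mathcal{C}$ with the two complexity-theoretic transfer theorems.

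First I would dispatch membership for acyclic NFAs. By Proposition~\ref{prop-OW-AuxPDA-C}, every group $G \in \mathcal{C}$ belongs to OW-AuxPDA, i.e.\ its word problem is accepted by a (logspace bounded, polynomially time bounded) one-way AuxPDA. Plugging this into Theorem~\ref{thm-acyclic-NFA} immediately gives that membership for acyclic NFAs over $G$ lies in $\LogCFL$.

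Next I would derive the knapsack bound. By Proposition~\ref{prop-knapsack-tame-C}, every $G \in \mathcal{C}$ is knapsack-tame, hence in particular polynomially knapsack-bounded. Therefore Theorem~\ref{thm-membership-to-knapsack} supplies a logspace reduction from knapsack for $G$ to membership for acyclic NFAs over $G$. Composing this reduction with the membership algorithm from the previous step, and invoking closure of $\LogCFL$ under logspace many-one reductions (which is immediate from its definition as the class of problems logspace-reducible to a context-free language, logspace reductions being closed under composition), yields that knapsack for $G$ is in $\LogCFL$.

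I do not anticipate any genuine obstacle: Propositions~\ref{prop-knapsack-tame-C} and~\ref{prop-OW-AuxPDA-C} already encapsulate the structural induction over $\mathcal{C}$ (building on Theorems~\ref{thm-hyp-OW-AuxPDA}, \ref{thm-free-directZ-OW-AuxPDA} and~\ref{thm-hyperbolic-semilinear} together with Proposition~\ref{prop-knapsack-tame}), and Theorems~\ref{thm-acyclic-NFA} and~\ref{thm-membership-to-knapsack} provide the complexity-theoretic bridge. The only point worth stating explicitly is the closure of $\LogCFL$ under composing a logspace reduction with a $\LogCFL$ procedure, which is standard; everything else is a direct citation.
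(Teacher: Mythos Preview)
Your proposal is correct and matches the paper's approach exactly: the paper simply states that the corollary follows from Propositions~\ref{prop-knapsack-tame-C} and~\ref{prop-OW-AuxPDA-C} together with Theorems~\ref{thm-acyclic-NFA} and~\ref{thm-membership-to-knapsack}, and you have spelled out precisely how these pieces fit together, including the (standard) closure of $\LogCFL$ under logspace reductions needed for the knapsack half.
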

Corollary~\ref{coro-final} generalizes Corollaries~\ref{coro-acyclic-NFA-hyp} and \ref{thm-knapsack-logcfl}
as well as \cite[Corollary~22]{FrenkelNU15}, where it was shown that knapsack can be solved
in polynomial time for a free product of hyperbolic groups and finitely generated abelian groups.

\section{Conclusion} 

In this paper, it is shown that every hyperbolic group is knapsack-tame and that the knapsack problem 
can be solved in $\LogCFL$. Here is a list of open problems that one might consider for future work.
\begin{itemize}
\item For the following important groups, it is not known whether the knapsack problem
is decidable:  braid groups $B_n$ (with $n \geq 3$), solvable Baumslag-Solitar groups $\mathsf{BS}_{1,p} = \langle a,t \mid t^{-1} a t = a^p \rangle$
(with $p \geq 2$), and automatic groups which are not in any of the known classes with a decidable knapsack problem.
\item In \cite{KoenigLohreyZetzsche2015a}, it was shown that knapsack is decidable for every co-context-free group. The algorithm from \cite{KoenigLohreyZetzsche2015a} has 
an exponential running time. Is there a more efficient solution?
\item  Is there a polynomially knapsack-bounded group which is not knapsack-tame?
\end{itemize}


\end{document}